\newtheorem{Lemma}{Lemma}[section]
\newtheorem{Theorem}{Theorem}
\newtheorem{Proposition}[Lemma]{Proposition}
\newtheorem{Remark}[Lemma]{Remark}
\newtheorem{Definition}[Lemma]{Definition}
\newtheorem{Hypothesis}{Hypothesis}
\newenvironment{Proof}%
 {\begin{trivlist} \item[]{\bf Proof. }}%
 {\hspace*{\fill}$\rule{.4\baselineskip}{.4\baselineskip}$\end{trivlist}}
\makeatletter\@addtoreset{figure}{section}\makeatother
\makeatletter \@addtoreset{equation}{section} \makeatother
\newcommand{\R}{\mathbb{R}}
\newcommand{\C}{\mathbb{C}}
\newcommand{\Z}{\mathbb{Z}}
        \newcommand{\mc}[1]{\mathcal{#1}}
        \newcommand{\mb}[1]{\mathbb{#1}}
         \newcommand{\mf}[1]{\mathfrak{#1}}
        \newcommand{\tl}[1]{\tilde{#1}}
        \newcommand{\lp}{\left}
        \newcommand{\rp}{\right}
                \newcommand{\la}{\lp\langle}
        \newcommand{\ra}{\rp\rangle}
        \newcommand{\beq}{\begin{equation}}
        \newcommand{\eeq}{\end{equation}}
        \newcommand{\ba}{\begin{align}}
        \newcommand{\ea}{\end{align}}
        \newcommand{\fr}[2]{\frac{#1}{#2}}
        \newcommand{\p}{\partial}
        \newcommand{\ri}{\mathrm{i}}
        \newcommand{\rlin}{\mathrm{lin}}
        \newcommand{\re}{\mathrm{e}}
        \newcommand{\rre}{\mathrm{Re}}
        \newcommand{\rim}{\mathrm{Im}}
         \newcommand{\rl}{\mathrm{l}}
                \newcommand{\rtf}{\mathrm{tf}}
                        \newcommand{\rs}{\mathrm{s}}
                        \newcommand{\rsu}{\mathrm{su}}
                                \newcommand{\rss}{\mathrm{ss}}
                                       \newcommand{\rcu}{\mathrm{cu}}
                                                \newcommand{\ru}{\mathrm{u}}
                                                \newcommand{\rff}{\mathrm{ff}}
                                                \newcommand{\rpr}{\mathrm{pr}}
        \newcommand{\gso}{\gamma_\rss}
        \newcommand{\guo}{\gamma_\rsu}
                \newcommand{\gst}{\kappa_\rss}
        \newcommand{\gut}{\kappa_\rsu}
\def\blfootnote{\gdef\@thefnmark{}\@footnotetext}
\begin{document}

\vspace*{0.4in}
\begin{center}
{\fontsize{16}{16}\textbf{Pattern formation in the wake of triggered pushed fronts}}\\[0.2in]

Ryan Goh and Arnd Scheel\blfootnote{*Research partially supported by the National Science Foundation through grants NSF- DMS-0806614 and NSF-DMS-1311740. This material is based upon work supported by the National Science Foundation Graduate Research Fellowship under grant NSF-GFRP-00006595}\\
\textit{\footnotesize University of Minnesota, School of Mathematics,   206 Church St. S.E., Minneapolis, MN 55455, USA}\\

September 22th, 2015
\end{center}

\abstract{  
\noindent
Pattern-forming fronts are often controlled by an external stimulus which progresses through a stable medium at a fixed speed, rendering it unstable in its wake. By controlling the speed of excitation, such stimuli, or ``triggers," can mediate pattern forming fronts which freely invade an unstable equilibrium and control which pattern is selected. In this work, we analytically and numerically study when the trigger perturbs an oscillatory pushed free front.  In such a situation, the resulting patterned front, which we call a pushed trigger front, exhibits a variety of interesting phenomenon, including snaking, non-monotonic wavenumber selection, and hysteresis. Assuming the existence of a generic oscillatory pushed free front, we use heteroclinic bifurcation techniques to prove the existence of trigger fronts in an abstract setting motivated by the spatial dynamics approach.  We then derive a leading order expansion for the selected wavenumber in terms of the trigger speed. Furthermore, we show that such a bifurcation curve is governed by the difference of certain strong-stable and weakly-stable spatial eigenvalues associated with the decay of the free pushed front. We also study prototypical examples of these phenomena in the cubic-quintic complex Ginzburg Landau equation and a modified Cahn-Hilliard equation.
}

{\small
{\bf Running head:} {Triggered pushed fronts}

{\bf Keywords:} Pushed fronts, heteroclinic bifurcations, Ginzburg-Landau equation, Cahn-Hilliard equation
}
\vspace*{0.2in}

\section{Introduction}

Over the past three decades, much experimental, numerical, and theoretical work has been done to study pattern formation in the wake of invading fronts (see \cite{ben1985pattern,cross1993pattern,Dee83,vanSaarloos03} for some of the first papers and reviews).  Typically, such fronts arise via the nucleation and invasion of an instability into a homogeneous unstable equilibrium which leaves behind a pattern forming front in its wake.  In many cases, the resulting pattern is an unstable periodic wave train.  The wavenumber of this periodic pattern is usually independent of the initial perturbation and is selected by the nonlinear front propagation. We refer to such fronts, which mediate this invasion process, as \emph{free fronts}.  

	In practice, such invasion processes can be difficult to control as uniform suppression of random fluctuations is required to prepare the unstable equilibrium and hence form defect-free patterns. Also, one may only have limited control on system parameters, making it even more difficult to control the patterns formed. One way of gaining control of the pattern-forming process is to use an external mechanism which travels through a stable medium and locally excites it into an unstable state.  We shall call such a mechanism a \emph{trigger} and the resulting front, which connects the unstable and stable states, a \emph{preparation front}. Once the unstable state is established, the mechanism which governs the free front, causes a uniformly patterned state to nucleate in the wake of the trigger.  We shall call the resulting pattern-forming front a \emph{trigger front}.  See Figure \ref{f:sch} below for a schematic description of this process.
	
	Heuristically, one can think of the trigger as an effective boundary condition for the system when posed in a co-moving frame. Stationary solutions in this coordinate frame are usually referred to as nonlinear global modes \cite{chomaz2005global}. They mediate the transition from convective to absolute instability in a semi-infinite domain. From this perspective, our problem is somewhat equivalent to problems studied in \cite{couairon1997cf,chomaz1999against,pier1998steep}, and our results can be understood as a rephrasing and improvement of expansions in \cite{couairon2001pushed}. In particular, we emphasize universality in expansions for wavenumber and frequency in terms of only properties of the corresponding free front. We also note that a slightly different but related approach was used in \cite{doelmangeometric15} to study the effect of defects on one-dimensional localized structures.
	
Many examples of this triggered pattern formation arise in systems with mass-conserving properties. Typical model equations for such systems are the Cahn-Hilliard equation \cite{Foard12,Kopf14, krekhov,lagzi13}, the Keller-Segel model for chemotaxis \cite{mimura, matsuyama}, reaction-diffusion systems \cite{manz2002excitation}, or phase-field systems \cite{fife2002pattern,glasner2000traveling,Goh11}.  Other examples arise in ion-bombardment studies \cite{bradleygelfand} and are modeled by Kuramoto-Shivashinsky-type models, while still others arise when studying general osciliatory instabilities using real and complex Ginzburg-Landau models \cite{chomaz2005global,couairon1997,couairon1997cf}.

\paragraph{Free Fronts: Pushed vs. Pulled} 
Since the pattern formed in the wake of a trigger front is controlled by how the free front interacts with the trigger, we must discuss free fronts in more detail before we can describe our results. In an analogous fashion to super- and sub-critical transitions, free fronts come in two generic types known as \emph{pulled} and \emph{pushed}.

Pulled fronts can be described to very good approximation by a linear analysis based on branch points of a complex dispersion relation. The speed of such a front, known as the linear spreading speed, is determined by a marginal stability criterion \cite{Dee83}, which requires that the trivial state is pointwise marginally stable in a frame moving with this speed. In other words, decreasing the speed of the frame of observation, the instability of the trivial state changes from convective to absolute. Furthermore, invasion of such fronts is governed by linear growth in the leading edge which then saturates in the wake due to the nonlinearities of the system. Such nonlinear pulled fronts are known to be very sensitive to disturbances in the leading edge. Convergence towards, as well as relaxation of small perturbations to, pulled fronts is typically slow, in fact algebraic.  In a co-moving frame, this type of invasion can be either stationary or oscillatory with frequency $\omega_\mathrm{fr}$.  This frequency is typically a fraction of the frequency derived by the marginal stability criterion, $\omega_\mathrm{fr}=\omega_\mathrm{lin}/\ell$. The case of strong resonance, $\ell=1$, is often referred to as node conservation in the leading edge.  For a systematic study of such fronts using pointwise Green's functions see \cite{holzer14}.

Pushed fronts arise when nonlinearities amplify linear growth sufficiently so that the speed of propagation of disturbances exceeds the linear, pulled speed. Pushed fronts are generally steeper and convergence towards them is fast, being exponential in time. In fact, the Green's function for the linearization at pushed fronts exhibits simple poles associated with the neutral Goldstone modes, while the linearization near pulled fronts exhibits a singularity with structure similar to the 3-dimensional heat kernel \cite{gallay94}.  In the language of spatial dynamical systems, such a front consists of a heteroclinic orbit which converges to an equilibrium along a strong-stable invariant manifold.  While numerical studies of such fronts have been performed in many different systems, rigorous theoretical study has been limited to a small number of mathematical models including the Nagumo equation, coupled-KPP equations,  Lotka-Volterra systems, and the Complex Ginzburg Landau equation \cite{holzer2012slow, holzer14b, van1989,vanSaarloos92}. 

For a more comprehensive review of these two generic types of fronts, including many numerical and physical examples see \cite{vanSaarloos03}.

\paragraph{Our Contributions}
The main question of interest here is the effect of the trigger on the wavenumber of the periodic pattern in the wake of the front. One observes that for large enough trigger speeds the influence is negligible, possibly after an initial transient: in the wake of the trigger, one observes a front very close to the free front, and the distance between trigger and patterns increases linearly in time.   We will thus focus on the situation when the speed of the trigger is close to that of the free front.  One then expects small corrections to the wavenumber in the wake. In particular, for trigger speeds smaller than the speed of a free front, one expects a locked state, where the free front has caught up with the trigger.  In this locked state, the trigger can be thought of as exerting a pressure, or strain on the periodic pattern, causing a perturbation in its wavenumber.

\begin{figure}[h!]
\centering
\includegraphics[width=0.85\textwidth]{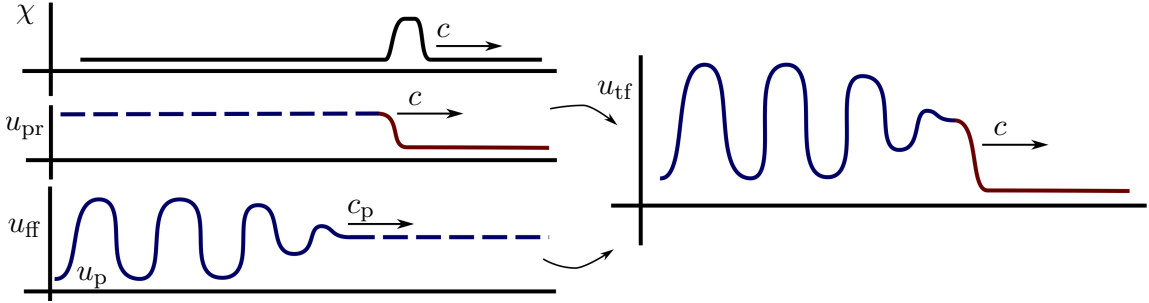}
\vspace{-0.0in}
\caption{Schematic depiction of our results.  The triggering mechanism, $\chi$, travels with speed $c$ and creates the preparation front $u_\mathrm{pr}$ connecting a stable state (solid red) with an unstable state (dashed blue). The free front, $u_\mathrm{ff}$, invades the unstable state with speed $c_\mathrm{p}$, leaving behind a periodic pattern $u_\mathrm{p}$.  The resulting pattern-forming trigger front, $u_\mathrm{tf}$, is obtained from the combination of $u_\mathrm{pr}$ and $u_\mathrm{tf}$,  for speeds $c$ close to $c_\mathrm{p}$.}\label{f:sch}
\end{figure}

In the previous work \cite{GohScheel14}, we rigorously characterized a prototypical example of a trigger front which is perturbed from a \emph{pulled} free front.  There it was shown that wavenumber and front position asymptotics can be predicted to leading order by the absolute spectrum of the unstable trivial state and to second order by the projective distance between two invariant manifolds near the unstable homogeneous equilibrium. Furthermore, it was shown in this case that the wavenumber of the periodic pattern in the wake varies monotonically as the speed of the trigger is varied.

In this work, our goal is to study trigger fronts perturbed from a \emph{pushed} free front.  Conceptually, our results are as follows.  Assume that a one-dimensional, evolutionary pattern forming system has the following properties:

\begin{itemize}
\item There exists an oscillatory pushed free front $u_\mathrm{ff}$ invading an unstable homogeneous equilibrium $u_*$ with speed $c_\mathrm{p}>0$.
\item For speed $c_\mathrm{p}$, there exists a preparation front $u_\mathrm{pr}(x - c_\mathrm{p}t)$ formed in the wake of a spatial trigger which connects $u_*$ to a stable homogeneous state $\tl u_*$ as $\xi:=x - c_\mathrm{p}t$ increases from $-\infty$ to $+\infty$.  
\item  The fronts $u_\rff$ and $u_\rpr$ are generic.  In other words, when viewed as heteroclinic orbits in a spatial dynamics formulation, $u_\rpr$ is transverse while  $u_\rff$ is transversely unfolded in parameters $\omega$ and $c$, where $\omega$ is the temporal frequency of the periodic pattern associated with $u_\rff$ and $c$ is the speed  of the trigger.
\item The inclination properties of the relevant invariant manifolds about $u_\rpr$ are generic.
\end{itemize}

Then for trigger speeds close to the free invasion speed $c_\mathrm{p}$, there exists a family of pushed trigger fronts connecting a spatially periodic orbit to the aforementioned stable state. Moreover, this family has a bifurcation curve in the parameter space $\mu := (c - c_\mathrm{p}, \omega - \omega_\mathrm{p})\in\R^2$, with the asymptotic form
\beq
\mu(L) = K \re^{\Delta \nu L}(1+\mc{O}\lp(\re^{-\delta L})\rp),
\eeq
where, $L\gg1$, $K$ is a linear mapping from $\C$ to $\R^2$, and $\Delta \nu$ denotes the difference of strong stable eigenvalues associated with the decay of the free pushed front, and other weakly stable eigenvalues (see Figure \ref{f:spec}).

\begin{figure}[h!]\label{f:spiral}
\centering
\includegraphics[width=0.75\textwidth]{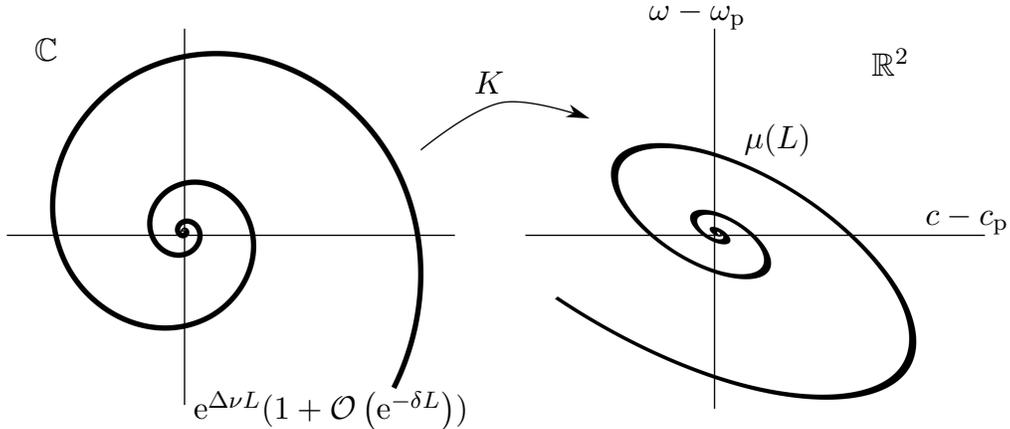}
\caption{Leading order bifurcation curve of pushed trigger fronts in $\mu$-parameter space.}
\end{figure}

If a pushed free front is oscillatory, the bifurcation curve of pushed trigger fronts takes on a logarithmic spiral shape, leading to a variety of interesting phenomena. Namely, such trigger fronts will exhibit snaking behavior. This leads to the possibility of multi-stability of fronts, locking behavior for trigger speeds slightly higher than $c_\mathrm{p}$,  and finally hysteretic switching between different wave-numbers. This last effect is particularly interesting as it could potentially be exploited in the design and control of self-organized patterning processes.  

The genericity assumptions above can also be formulated in terms of spectral information.  For $u_\rpr$, such hypotheses are equivalent to assuming that the Evans function associated with the linearization about the front has no zeros at the origin.  For $u_\mathrm{ff}$, the associated Evans function has a zero of algebraic multiplicity two at the origin, corresponding to the temporal and spatial translation symmetries of the front.

Technically, we use an abstract formulation motivated by the spatial dynamics approach and employ heteroclinic matching techniques to prove existence of pushed trigger fronts and give universal asymptotics for their frequency and wave-numbers.  We shall show that the front dynamics are, to leading order, governed by the spectral gap between strong-stable spatial eigenvalues, which govern the asymptotic decay of the free pushed front, and other weakly stable eigenvalues.  As seen in Figure \ref{f:spec}, the simplest form of such a gap may come in several varieties, each of which may lead to different phenomena.  In this paper we shall focus on the case depicted on the left where the gap is determined by two complex conjugate pairs.  The other cases in this figure may also lead to many interesting phenomenon and are briefly discussed in Section \ref{s:dis}.  

Throughout the paper, we consider two prototypical examples to elucidate our results.  The first of these is the cubic-quintic complex Ginzburg-Landau equation (qcGL).  We choose this relatively simple example to demonstrate our results and motivate their application to more complicated systems.  Finding pushed trigger fronts in the qcGL equation can be reduced to a finite dimensional traveling-wave ODE in which all of the required hypotheses for our result have been proven in previous studies, or can be obtained by straightforward arguments.  

The second example we consider is a modified Cahn-Hilliard equation. This equation will serve as an illustration for how our results apply in the case where the existence problem is inherently infinite-dimensional. While in this setting, it is not straightforward to verify the required hypotheses (see Section \ref{ss:ch} and Section \ref{ss:ar}), we provide numerical evidence showing the predicted phenomenon, and also evidence for one of our most important hypotheses: the existence of an oscilliatory pushed free front.

\begin{figure}\label{f:spec}
\centering
\includegraphics[width=0.85\textwidth]{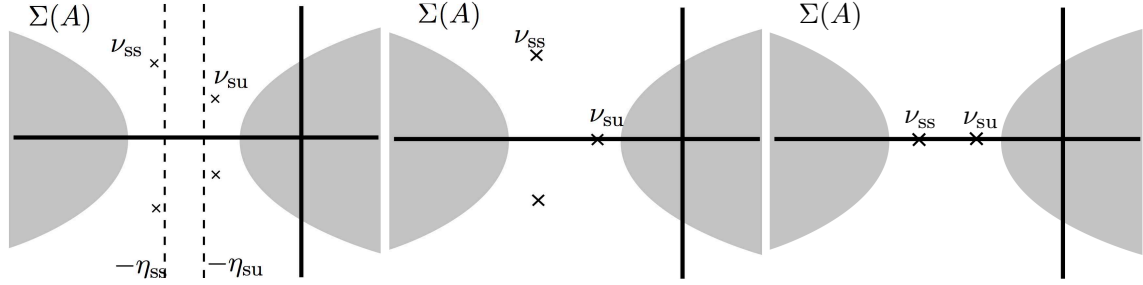}
\caption{Depiction of different cases for splitting of spatial eigenvalues corresponding to a free pushed front. The grey areas denote the rest of the spectrum of the linearization of the spatial dynamics formulation. We study the case depicted in the left plot, here the dotted lines denote the exponential weights we use to select relevant solutions near the origin. }\label{f:spec}
\end{figure}

\paragraph{Outline}
The following work is structured as follows.  In Section \ref{s:ex} we give examples of the relevant phenomena in specific equations.  In Section \ref{s:ab} we formulate our abstract hypotheses and state our main result. In Section \ref{s:pf} we then give the heteroclinic matching proof for the existence of pushed trigger fronts and obtain leading order expansions for the bifurcation curve in terms of the spectral information of the system.  We conclude our work in Section \ref{s:dis} by discussing future areas of work and how our results could be improved and extended.

\section{Examples and sketch of main result}\label{s:ex}
To motivate our results, we briefly describe examples in the cubic-quintic complex Ginzburg-Landau and Cahn-Hilliard equations which illustrate the phenomena mentioned in the introduction.  

\subsection{Complex Ginzburg-Landau equation}\label{ss:cgl}  

The complex Ginzburg-Landau equation has been used as a modulation equation to study the onset of coherent structures in many physical systems.   Furthermore, pattern-forming free fronts have been extensively studied in this setting.  In particular, it has been shown in \cite{vanSaarloos92} that the cubic-quintic variant
\beq\label{e:cgl}
\tl u_t = (1+\ri \alpha) \tl u_{xx} + \tl u + (\rho + \ri \gamma) \tl u|\tl u|^2 - (1+\ri \beta) \tl u |\tl u|^4, \qquad x,t \in \R, \,\,\, \tl u\in \C,
\eeq
possesses pushed free invasion front solutions for a range of parameters $\rho,\alpha, \gamma,\beta$.  That is, there exist front solutions which connect a wave train $u_p(x,t) = r\re^{\ri(k_\mathrm{p}x - \omega_\mathrm{p} t)}$ at $x= -\infty$ to the unstable homogeneous equilibrium $u_*\equiv 0$ at $x\rightarrow \infty$ with an interface which invades the unstable state $u_*$ with a speed, $c_\mathrm{p}$, faster than the linearized dynamics predict.  The parameters of the asymptotic wave train, $r,k,\omega\in \R$, can be found to satisfy the nonlinear dispersion relation 
\begin{align}
1 &= k^2 - \rho r^2 + r^2,\notag\\
\omega + c k &= \alpha k^2 - \gamma r^2 + \beta r^4.\label{e:disp}
\end{align}

By shifting into a co-moving frame $\xi = x - c_\mathrm{p}t$ and detuning by $ u = \re^{\ri \omega_\mathrm{p} t} \tl u$, such a traveling front takes the form of a heteroclinic orbit in the finite-dimensional system
\beq\label{e:cgl2}
0 =  (1+\ri \alpha)  u_{\xi\xi} + c_\mathrm{p}u_\xi+  (1 - \ri \omega) u + (\rho + \ri \gamma)  u| u|^2 - (1+\ri \beta)  u | u|^4.
\eeq

In this setting, we can then study how a spatially progressive triggering mechanism, $\chi_\epsilon$, affects this pattern-forming front using the following system
\begin{align}
0 &= (1+\ri \alpha) u_{\xi\xi} +cu_\xi+ (\chi_\epsilon(\xi) - \ri \omega )u + (\rho + \ri \gamma)  u| u|^2 - (1+\ri \beta) u |u|^4,\label{e:cgl3}\\
\epsilon\fr{d}{d\xi}\chi_\epsilon &= \chi_\epsilon^2-1,\label{e:cgl3a}
\end{align}
where $\chi_\epsilon$ takes the role of the trigger with $0<\epsilon<<1$ and $\chi_\epsilon(0) =0$.   When viewed in the stationary coordinate frame, the inhomogeneity $\chi_\epsilon$ travels through the spatial domain, altering the PDE-stability of $u_*$. For $\xi>0$ the state is stable, while for $\xi<0$ it is unstable. Thus, if $u_0$ is locally perturbed, an oscillatory instability will develop, leading to the formation of a patterned state in the wake of $\chi_\epsilon$.  

Numerical simulations show that such a mechanism creates pattern-forming fronts which behave in a strikingly different manner than in the pulled case \cite{GohScheel14}.  As can be seen in Figure \ref{f:cgl}, the front exhibits snaking behavior as the trigger speed $c$ is varied near $c_\mathrm{p}$.   Here the front interface of the solution hysteretically ``locks" at different distances to the trigger interface located at $x - ct = 0$. Furthermore, this locking causes trigger fronts to persist for speeds larger than the free invasion speed $c_\mathrm{p}$.  Additionally, Figure \ref{f:cgl} shows wavenumbers of the periodic pattern in the wake of the trigger vary non-monotonically and hysteretically as the trigger speed $c$ is varied. 

 \begin{figure}[h!]
\centering
\begin{subfigure}[h]{0.5\textwidth}
\includegraphics[width=\textwidth]{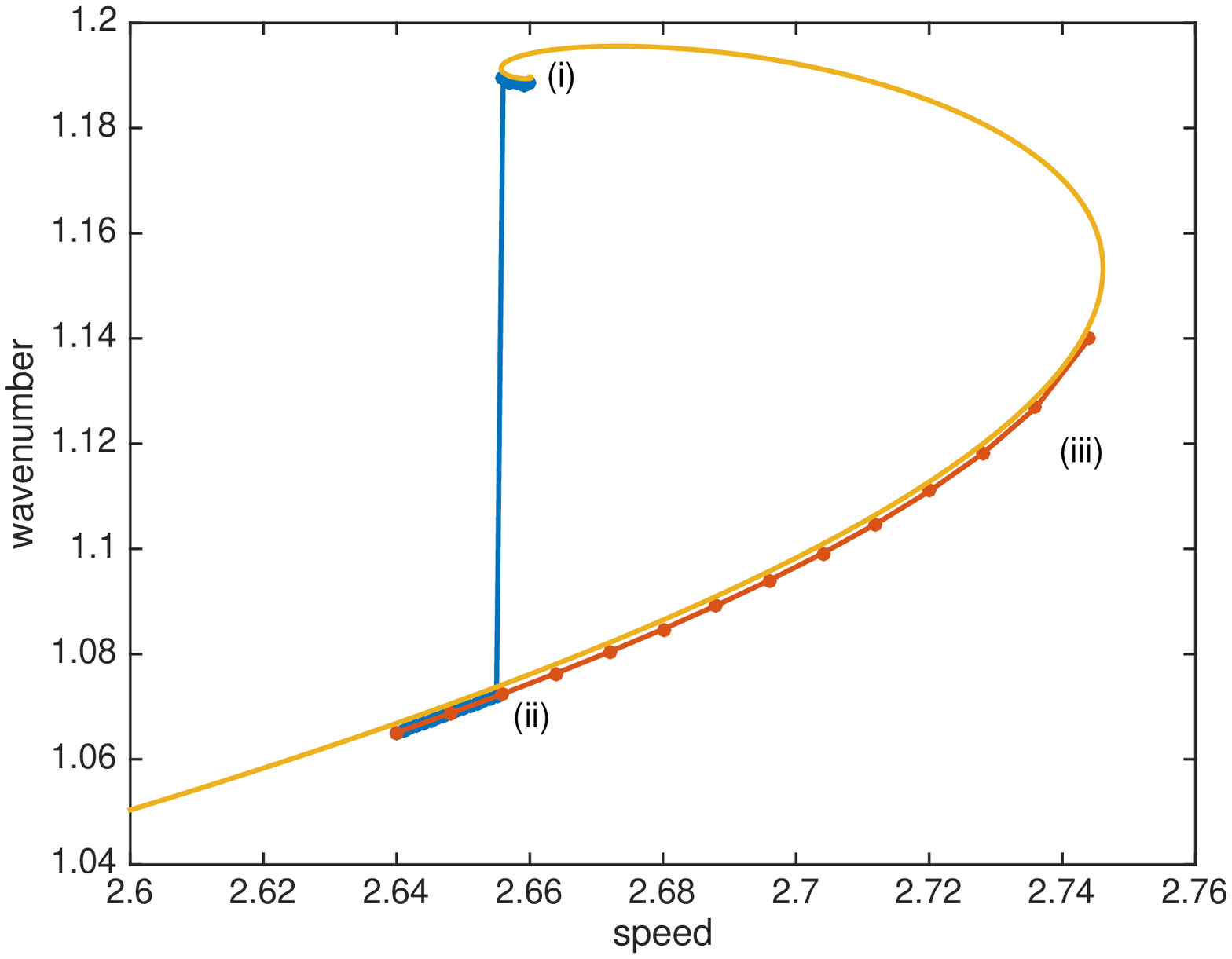}
\end{subfigure}
\hspace{-0.2in}
\begin{subfigure}[h]{0.5\textwidth}
\includegraphics[width=\textwidth]{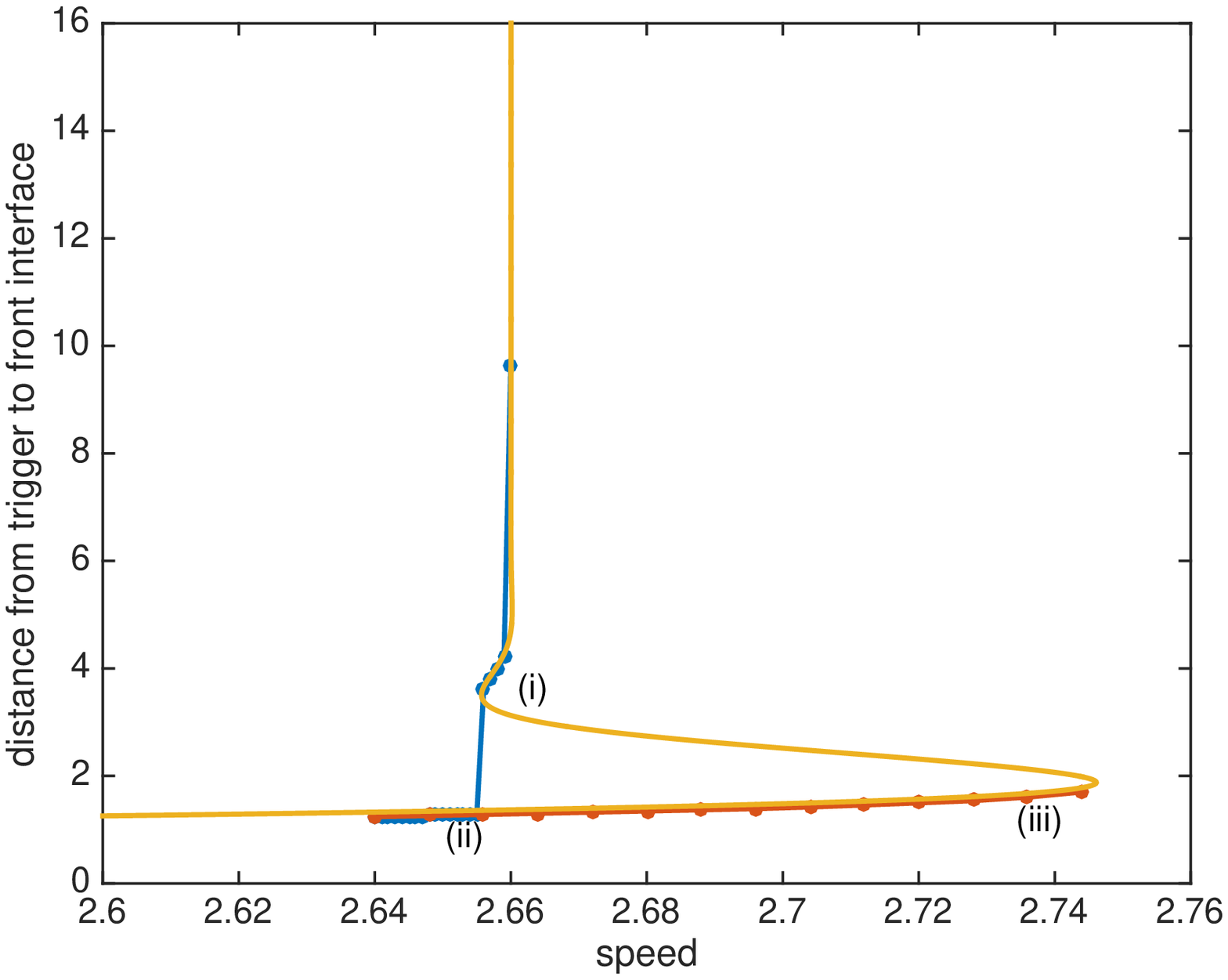}
\end{subfigure}

\begin{subfigure}[h]{1.1\textwidth}
\hspace{-0.5in}
\includegraphics[width=\textwidth]{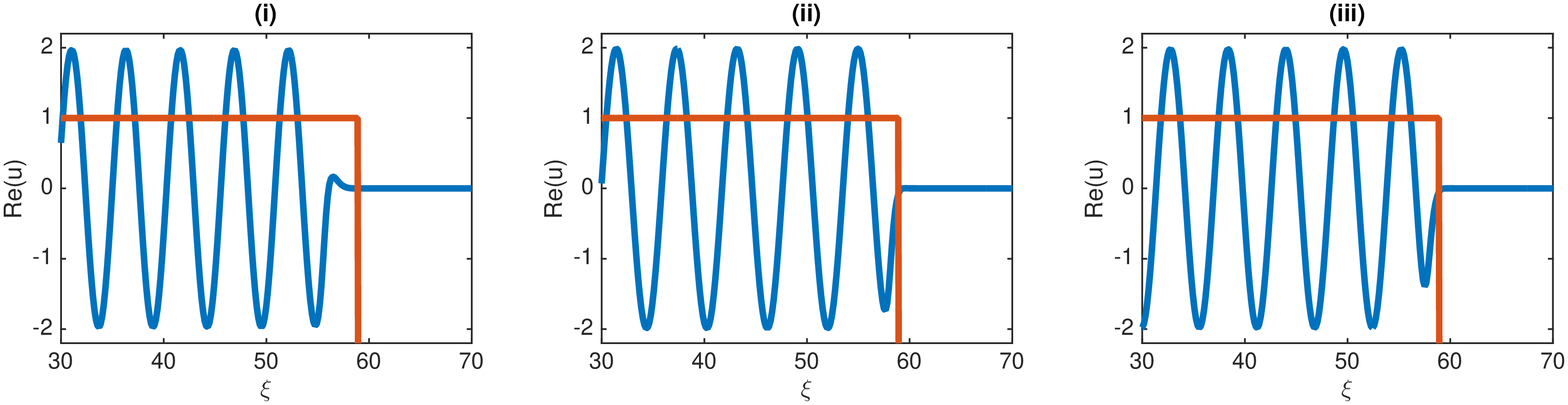}
\end{subfigure}

\caption{Numerical bifurcation diagrams comparing computations of triggered qcGL equation from AUTO07P (yellow) and direct simulation (blue and orange dots) with parameter values $\alpha = 0.3, \gamma = -0.2, \beta = 0.2, \rho = 4$ so that $c_p \approx 2.66$. The bottom three figures depict triggered pushed front profiles for a range of parameter values: (i): $(c,k) = (2.656,1.1894)$, (ii): $(c,k) = (2.646,1.0678)$,  (iii) $(c,k) = (2.728,1.1181)$, zoomed in near the trigger to illustrate the distance to the interface of the trigger $\chi_\epsilon$ which is overlaid in orange.  The direct simulations were done using a 2nd-order exponential time differencing scheme  (see \cite{Cox02}) with $dt = 0.01$, a spectral spatial discretization with $2^{10}$ Fourier modes, and were run in the co-moving frame with speed $c$.  Note that the trigger was made negative near the left boundary at $\xi = 0$ (not pictured) to accommodate for the periodic boundary conditions. This was not found to affect the results as the nucleated patterns were unaffected by this interface, having negative group velocity.}

\label{f:cgl}
\end{figure}
 
These results were obtained using both numerical continuation and direct simulation of \eqref{e:cgl3}. The yellow curves were found via numerical continuation in AUTO07P. In order to avoid periodic boundary conditions, these computations were done in the blow-up coordinates derived in \cite{Goh11}, where periodic orbits in the traveling wave equation collapse to equilibrium points.  The dotted lines (blue and orange) come from measurements of direct simulations.  In these simulations the homogeneous state $u_*$ was locally perturbed far away from the trigger interface, resulting in a patterned state which locked some distance away from the interface (blue curve).  The trigger speed $c$ was then adiabatically decreased and, when $c$ reached the turning point of the bifurcation curve found using AUTO07P, the front detached and re-locked to a solution branch with a different wavenumber and front interface closer to the trigger.  The trigger speed $c$ was then adiabatically increased, continuing solutions along this different branch (orange curve).

\subsection{Cahn-Hilliard equation}\label{ss:ch}

We have also have investigated these types of fronts in a modified Cahn-Hilliard equation
\beq\label{e:CH}
u_t = -(u_{xx} + f(u))_{xx},\quad f(u):= u + \gamma u^3 - u^5,\quad x,t,u\in \R.
\eeq
Because the linearization about the homogeneous unstable state $u_*\equiv 0$ is the same as the standard Cahn-Hilliard equation with $f(u) = u - u^3$, \eqref{e:CH} will have the same linear spreading speed \cite{vanSaarloos03},
$$
c_\rlin = \fr{2}{3\sqrt{6}} \lp(2+\sqrt{7}\rp)\sqrt{\sqrt{7} - 1}.
$$
Direct numerical simulations using both spectral and finite-difference methods have suggested that, for $\gamma>0$ sufficiently large this equation possesses oscillatory pushed invasion fronts which freely invade the homogenous state $u_*$.  Figure \ref{f:chpp} depicts spacetime diagrams of two free invasion fronts in \eqref{e:CH}, one with $\gamma<0$ and one with $\gamma>0$.  In the former case the front approximately travels with the linear speed $c_\rlin$, while in the latter case the front travels with a faster speed and possesses steeper decay at the leading edge.  Since the Cahn-Hilliard equation cannot be detuned as in the CGL equation above, such pushed front solutions would arise in a co-moving frame of speed $c_\mathrm{p}$ as time-periodic solutions with some temporal frequency $\omega_\mathrm{p}$.  That is they are solutions to the equation
\beq
\omega_\mathrm{p} u_\tau = -(u_{\xi\xi} + f(u))_{\xi\xi} +c_pu_\xi, \quad \xi\in \R, \tau\in [0,2\pi].
\eeq

We can then study pushed trigger fronts by introducing a uniformly-translating spatial trigger as above 
\beq\label{e:mCH}
\omega u_\tau = -(u_{\xi\xi} +\tl f(\xi,u))_{\xi\xi}+cu_\xi,\quad \tl f(\xi,u):=\chi(\xi) u+\gamma u^3 - u^5,\quad \epsilon \fr{d}{d\xi}\chi_\epsilon = \chi_\epsilon^2 -1 
\eeq
with $\epsilon>0$ small, and $\chi_\epsilon(3N/4) = 0$ for some $N>0$.  

\begin{figure}
\includegraphics[width=0.45\textwidth]{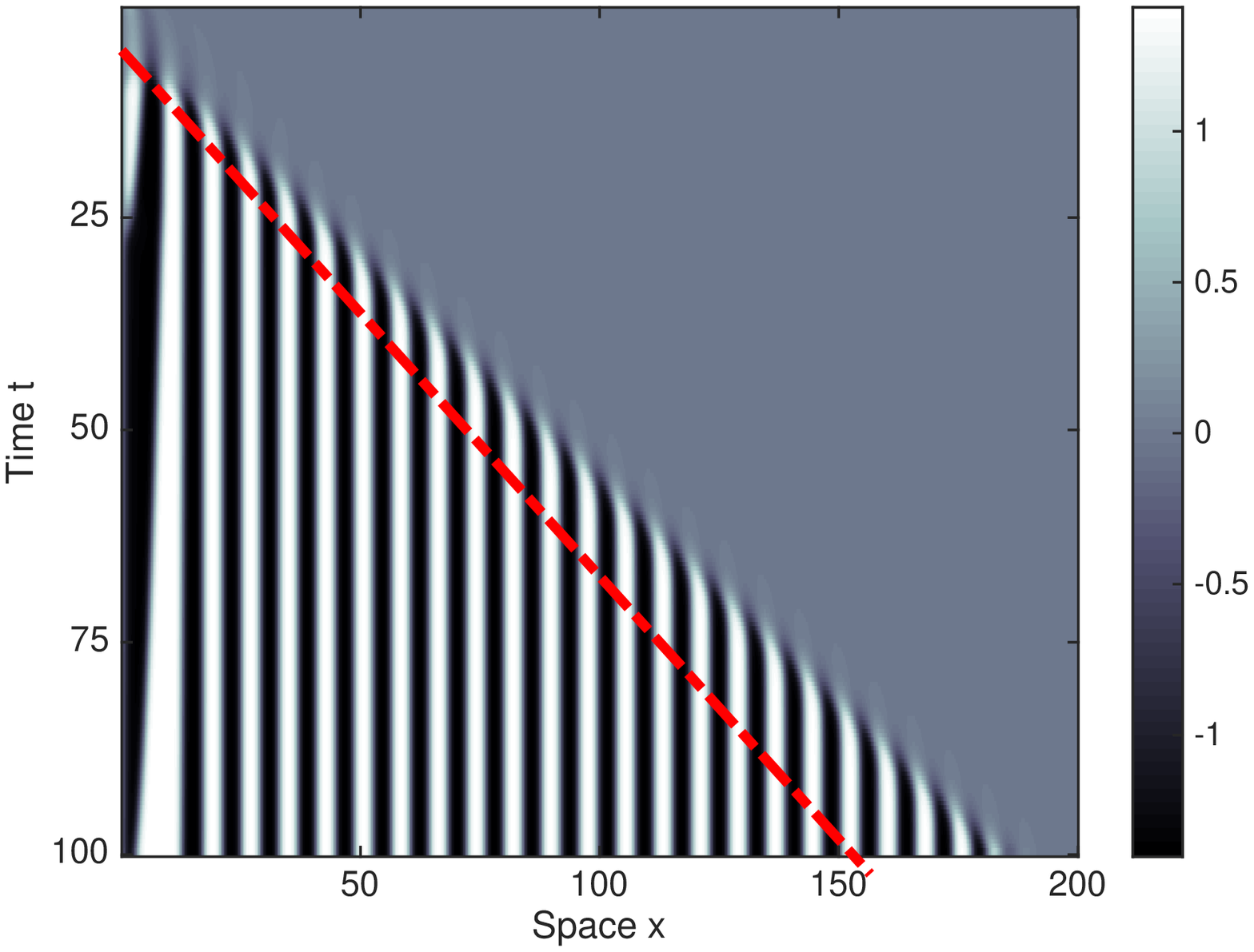}
\includegraphics[width = 0.45\textwidth]{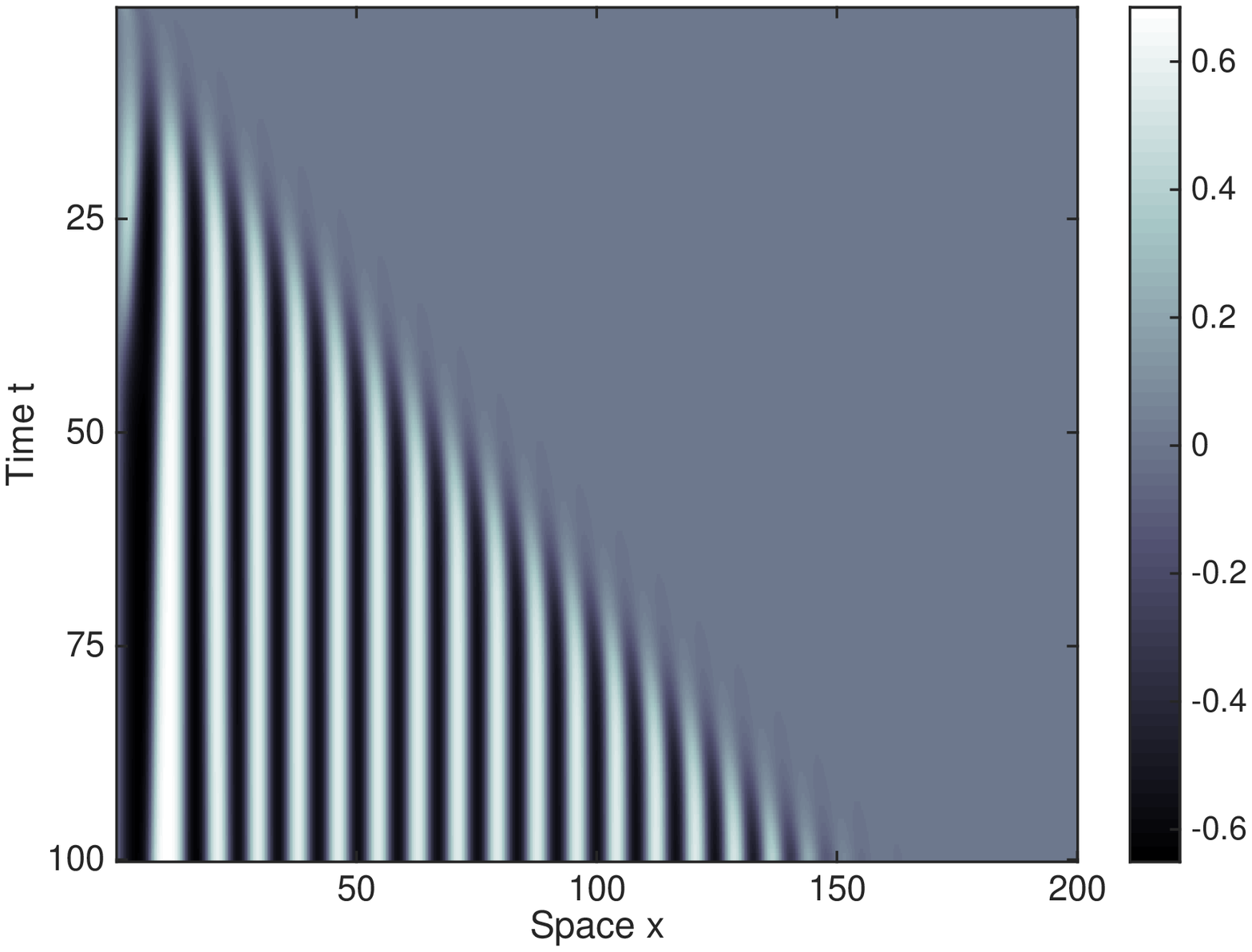}
\centering
\caption{Free invasion fronts in \eqref{e:CH} for $\gamma = 1.5$ (left) and $\gamma = -1.5$ (right). The invasion speed on the right is the linear speed predicted by the linearization about $u_*\equiv0$ while the invasion speed on the left is much faster and the corresponding front has a sharp leading edge, indicating a nonlinear front. The dashed red line overlaid on the left indicates the path of the pulled front on the right. Here \eqref{e:CH} was simulated using a semi-implicit time stepping method with second order finite differences in space ($dx = 0.2$) and first order in time ($dt = 0.01$). }
\label{f:chpp}
\end{figure}

\begin{figure}[h!]
\centering
\begin{subfigure}[h]{0.45\textwidth}
\includegraphics[width=\textwidth]{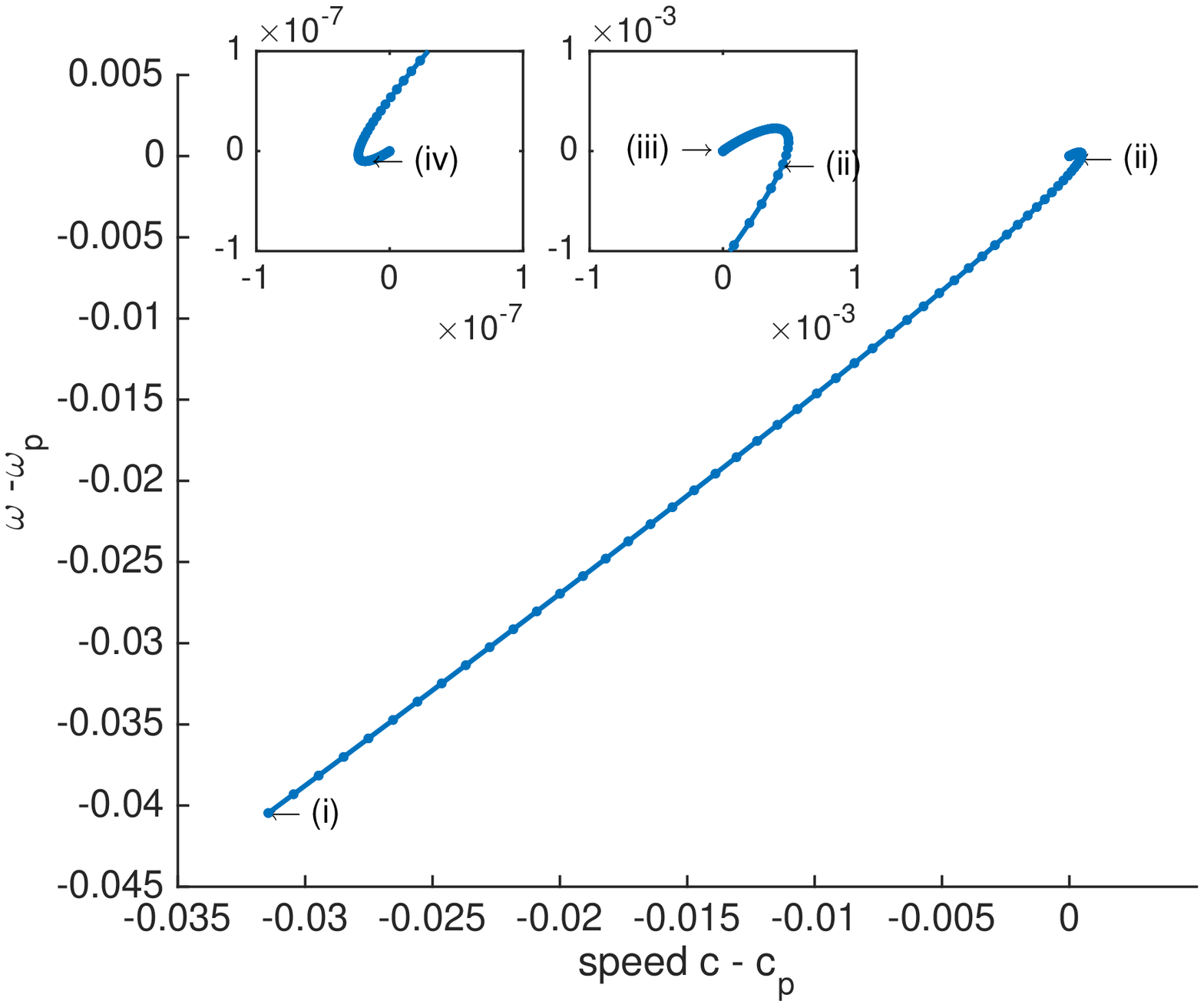}
\end{subfigure}
\hspace{-0.5in}
\begin{subfigure}[h]{0.45\textwidth}
\includegraphics[width=\textwidth]{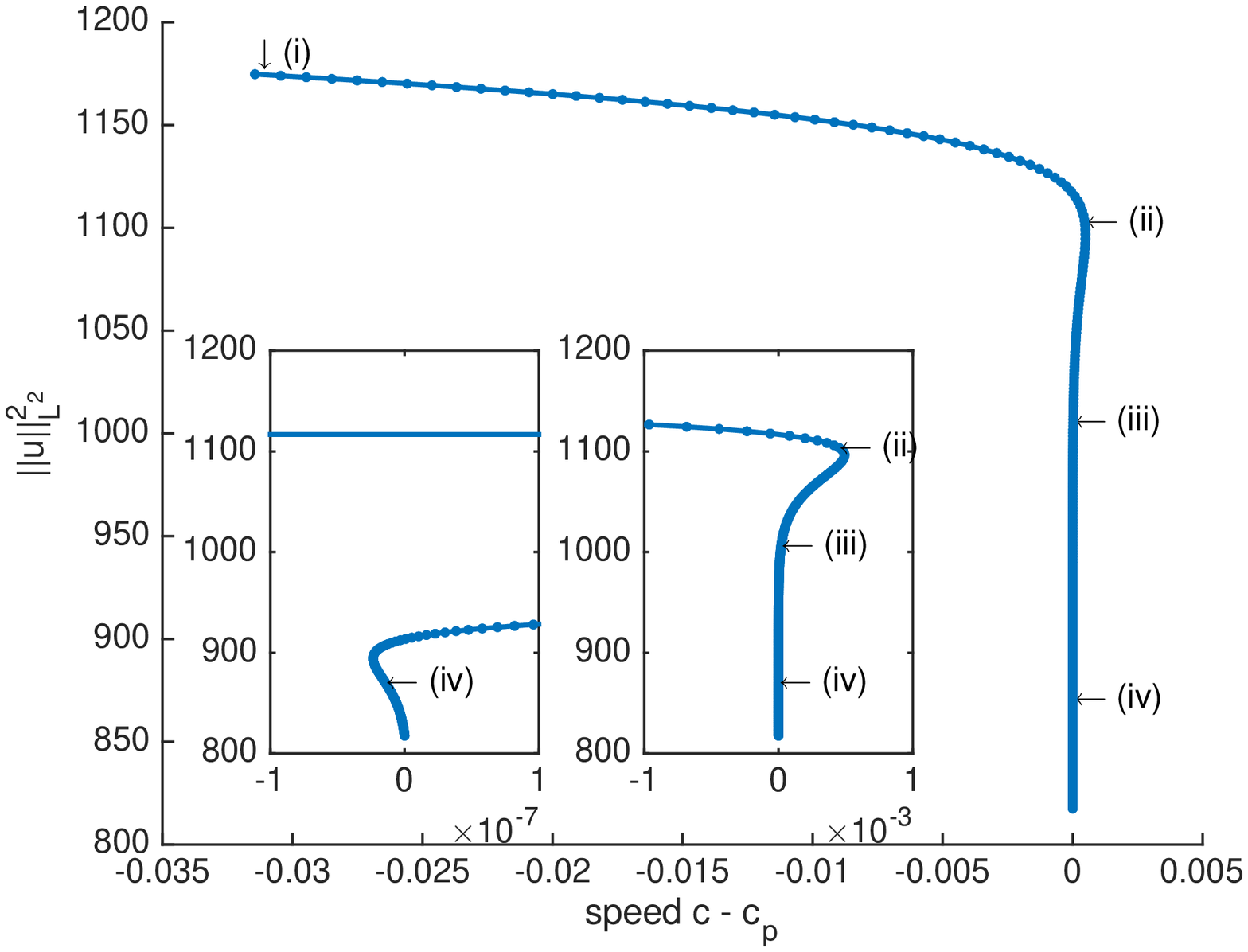}
\end{subfigure}

\begin{subfigure}[h]{1\textwidth}
\includegraphics[width=\textwidth]{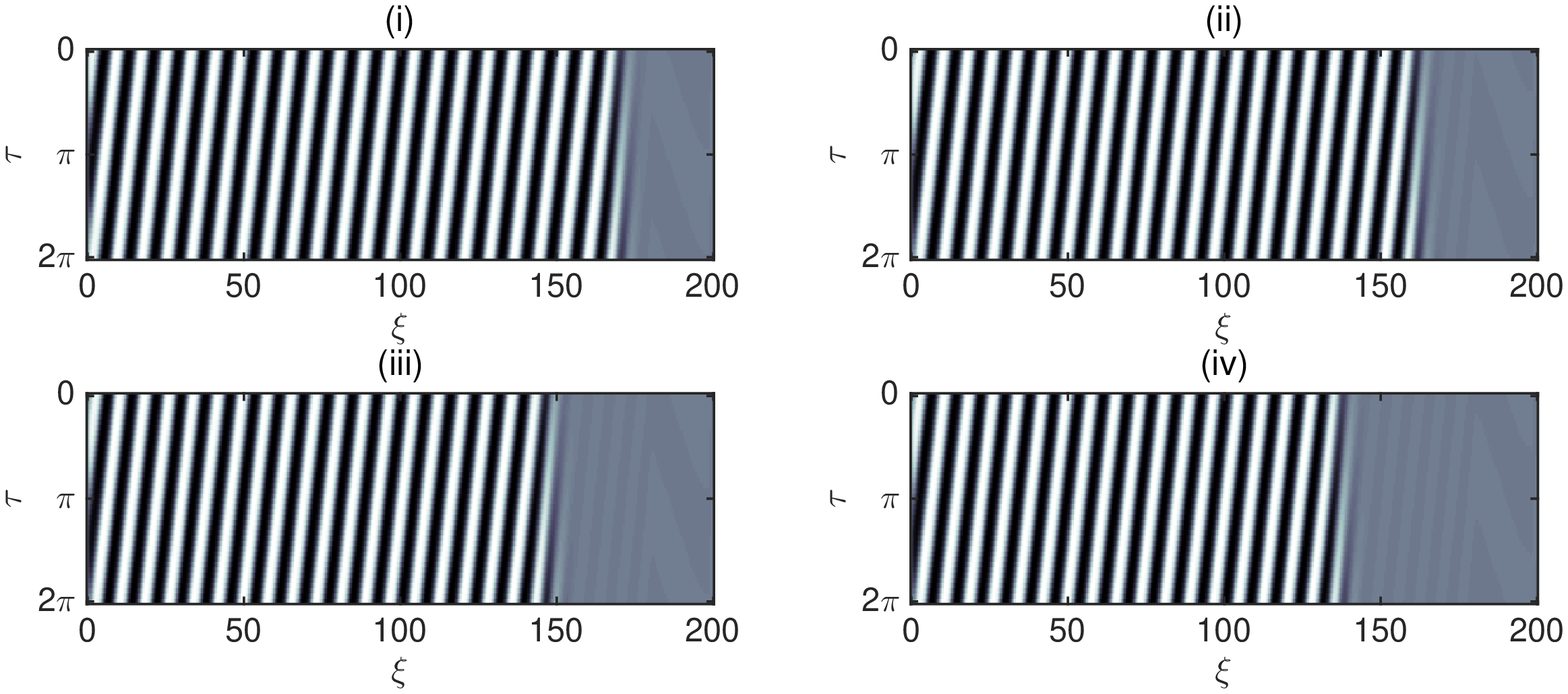}
\end{subfigure}
\vspace{-0.0in}
\caption{(upper left): Bifurcation curve for triggered pushed fronts in \eqref{e:mCH} with temporal frequency $\omega$ and trigger speed $c$ with $\gamma = 1.5$ for which the free pushed parameters are $(c_p,\omega_p) =(2.0324, 1.5115)$. (upper right): Plot of the $L^2$ norm of solutions against the trigger speed $c$. Insets are zoomed in near the value $c = c_\mathrm{p}$ (lower): Spactime diagrams of solutions for a selection of points (i): $(c,\omega) =(2.001,1.471)$,  (ii): $(c,\omega) =(2.0329,1.5113)$,   (iii): $(c,\omega) =(2.0325,1.5115)$,  (iv): $(c,\omega) =(2.0324,1.5115)$ along the bifurcation curve. First order forward differences for $\p_t$ and centered second-order differences for $\p_x$ were used, with step sizes $dt = 0.2, dx = 0.5$ respectively, and $N = 200$.  }
\label{f:chcont}
\end{figure}

Using numerical arc-length continuation we found that, in a narrow parameter regime, such fronts possess a spiraling bifurcation curve and thus exhibit locking and multi-stability phenomena, as in CGL.  Here we used the temporal frequency $\omega$ in our bifurcation diagrams and note that the wavenumber $k$ can be determined by the relation $c = \fr{\omega}{k}$ since the spatial pattern is stationary in a stationary frame. We also mention that this locking behavior was corroborated in semi-implicit time-stepping simulations.  In these simulations, if the homogeneous state was perturbed near the trigger, then the resulting patterned state would lock close to the trigger (i.e. farther out on the spiral).  If the homogeneous state was perturbed far away from the trigger then the pattern would lock far away from the trigger (i.e. closer to $(\omega_p,c_p)$ on the spiral).

Our numerical continuation method used finite-differences to discretize both temporal and spatial derivatives and the MATLAB Newton solver ``fsolve" to continue solutions on the domain $(\xi,\tau)\in [0,N]\times [0,2\pi]$ in $c$ and $\omega$ for some fixed $N$ large.  To accommodate the second parameter we appended the phase condition 
$$
\int_0^{2\pi} \la \p_\tau u(\cdot,s), u(\cdot,s) - u_\mathrm{old}(\cdot,s) \ra_{L^2([0,L])} ds = 1,
$$
where $u_\mathrm{old}$ is the solution found at the previous continuation step.  This eliminates the non-uniqueness due to the translation symmetry in time and allows the equation to be solved uniquely. The initial guess for the continuation algorithm was one full time-period of a solution obtained from an semi-implicit time-stepping method, with the same spatial discretizations as above.  More details of our continuation method can be found in the caption of Figure \ref{f:chcont}.

\section{Abstract formulation}\label{s:ab}
Our theoretical approach is motivated by the spatial dynamics method first formulated by Kirchgasser and subsequently developed by many others over the past few decades \cite{fiedler2003spatio,iooss1991bifurcating,kirchgassner1982wave, Sandstede01}.  By viewing a pattern-forming system as a continuous-time dynamical system, where the spatial variable is viewed as the ``time-like" variable,  the existence of a trigger front can be obtained as a heteroclinic bifurcation from a nearby pushed free front; see Figure \ref{f:gl} for a schematic of such a bifurcation.

In particular, we shall study a system of the form 
\begin{align}
\fr{d}{d\xi}u_0&= f_0(u_0;\mu)\label{e:sys1a}\\
\fr{d}{d\xi}u_1 &= Au_1 + f_1(u_0,u_1;\mu)u_1,\qquad \xi\in \R, \mu\in\R^2\label{e:sys1b},
\end{align}
where $\mu$ shall consist of system parameters, $u_0\in X_0:=\R^n$, and $u_1\in X_1$, a real Hilbert space. Equation \eqref{e:sys1a} governs the triggering mechanism as in \eqref{e:cgl3a}, while \eqref{e:sys1b} governs the pattern forming system.  Let $A:Y_1\subset X_1\rightarrow X_1$ be a closed linear operator where $Y_1:=D(A)$ is also a real Hilbert space which is dense and compactly embedded in $X_1$.  Furthermore we shall assume that there exists a projection $P$ such that $A^\rs:=AP$ and $-A^\ru := -(1-P)A$ are sectorial operators.  

Next we assume that the function $f_0$ satisfies
$$
f_0(0,\mu) =f_0(u_0^*(\mu),\mu) = 0,
$$
where $u_0^*(\mu)\in X_0$ varies smoothly in $\mu$ for all $\mu$ near the origin in $\R^2$. Furthermore we assume
$$
f_0\in \mc{C}^k( X_0\times \R^2, X_0),\qquad f_1\in \mc{C}^k(X_0\times X_1\times\R^2,X_1),
$$ for some $k\geq 1$. 
Defining $X:= X_0\times X_1$, it is readily seen that $U_* := (0,0),\,\, \tl U_*(\mu) :=(u_0^*(\mu),0)$ are equilibria of the system
\beq\label{e:sys2}
\fr{d}{d\xi} U = F(U;\mu),\qquad F(U;\mu) =  \left(\begin{array}{c}f_0(u_0;\mu) \\Au_1 + f_1(U;\mu)u_1\end{array}\right),\quad U = \left(\begin{array}{c}u_0 \\u_1\end{array}\right),
\eeq
 and 
 $$
 D_UF(\tl U_*(\mu);\mu) = \left(\begin{array}{cc}D_{u_0}f_0(u_0^*,\mu) & 0 \\ 0& A+f_1(u_0^*,0,\mu)\end{array}\right),\qquad
 D_UF( U_*;\mu) = \left(\begin{array}{cc}D_{u_0}f_0(0,\mu) & 0 \\ 0& A+f_1(0,0,\mu)\end{array}\right).
 $$

Next, motivated by the time-translation symmetry $\tau\rightarrow \tau - \theta$, which occurs in a typical spatial dynamics formulation, we assume the following 
\begin{Hypothesis}\label{h:sym}
Let $T_1: S^1\times X\rightarrow X$ be a strongly continuous group action of the circle, $S^1$, on $X$ such that $X_0\times\{0\}\subset \mathrm{Fix}(T_1)$, and $A$ and $f_1$ are both equivariant under this action.
\end{Hypothesis}

Finally, we assume the existence of a smooth family of periodic orbits.
\begin{Hypothesis}\label{h:po}
There exists a family of periodic solutions $U_p(\xi;\mu)$ of \eqref{e:sys2}, smooth in $\mu$,  which lie entirely in the subspace $\{0\}\times X_1$ and possess trivial isotropy with respect to $T_1$ which acts by,
$$
 T_1(k\zeta)\, U_p(\xi,\mu) = U_p(\xi+\zeta;\mu)
$$
where $k = k(\mu)$ defines the period, $2\pi/ k(\mu)$, of $U_p$. 
\end{Hypothesis}
In a typical spatial dynamics formulation, $k$ represents the spatial wavenumber of the periodic pattern.
 
 \begin{Remark}\label{r:q2}
 Note that the $u_1$-components of the equilibria $U_*$ and $\tl U_*$ are the same.  We have simplified the setting to reflect those in the examples given above where the trigger is a coefficient in the linear terms which progressively changes the stability of a constant preparation front.  We remark that our abstract setting could be readily altered to instead study a system with a source-term trigger which moves the system from one spatially homogeneous equilibrium to another.  
 \end{Remark}
\paragraph{CGL spatial dynamics}
In the setting of the complex Ginzburg-Landau equation given in \eqref{e:cgl3}-\eqref{e:cgl3a}, a formulation as above can be obtained by converting \eqref{e:cgl3} into a first order complex system  for $u$ and $v:=u_\xi$, and then decomposing into equations for the real and imaginary parts of each $u = s+\ri t, \,\,v = z+\ri w$ so that $s_\xi = z$ and $t_\xi = w$.  Setting $u_0 = \chi_\epsilon$ and $u_1 = (s,t,z,w)^T$, one obtains the system
\begin{align}
\epsilon \fr{d}{d\xi}u_0 &= u_0^2 - 1,\\
\fr{d}{d\xi}u_1 &=A(c,\omega) u_1 + f_1(u_0,u_1;c,\omega) u_1 \label{e:CGLsd}  
\end{align}
with 
\begin{align}
&A(\alpha,c,\omega) = -\fr{1}{1+\alpha^2}\left(\begin{array}{cc}0 & I_2 \\A_1 & A_2\end{array}\right),\qquad
 f_1(u_0,u_1;c,\omega)u_1 =Bu_1+ c(u_1)^T Cu_1 + d(u_1)^TDu_1,\notag \\
B&= -\fr{1}{1+\alpha^2}\left(\begin{array}{cc}0 &0\\B_1 & 0\end{array}\right),\quad
C = -\fr{1}{1+\alpha^2} \left(\begin{array}{cc}0 &0\\C_1 & 0\end{array}\right),\quad
D= \fr{1}{1+\alpha^2} \left(\begin{array}{cc}0 &0\\D_1 & 0\end{array}\right),\notag
\end{align}
where all the zeros are $2\times2$ zero-matrices, $I_2$ is the $2\times2$ identity, and 
\begin{align}
A_1 &= \left(\begin{array}{cc}1-\alpha\omega & \omega+\alpha \\-(\omega+\alpha) & 1-\alpha\omega\end{array}\right), \;
B_1 = \left(\begin{array}{cc}u_0-1 & \alpha(u_0-1) \\\alpha(1-u_0) & u_0-1\end{array}\right),\;
C_1 =  \left(\begin{array}{cc}\rho+\alpha\gamma & \alpha\rho-\gamma \\\gamma-\alpha\rho & \rho+\alpha\gamma\end{array}\right),\notag\\
D_1 &= \left(\begin{array}{cc}1+\alpha\beta & \alpha-\beta \\\beta-\alpha & 1+\alpha\beta\end{array}\right),\quad
c(u_1) = (s^2+t^2)\cdot (0,0,1,1)^T,\quad
d(u_1) = (s^2+t^2)^2\cdot (0,0,1,1)^T.\notag
\end{align}
Here, the phase space is simply $X = \R^5$.  The preparation front $u_\rpr$ corresponds to a heteroclinic orbit contained in the $\{u_1=0\}$ subspace, while $u_\mathrm{ff}$ is a heteroclinic orbit contained in the $\{u_0 = 1\}$ subspace. Also the $S^1$-action arises as the gauge-symmetry,
$$
T_1(\theta): (u, v)\mapsto \re^{\ri \theta}( u, v).
$$ 

\paragraph{Cahn-Hilliard spatial dynamics}
In the context of the modified Cahn-Hilliard equation given in \eqref{e:mCH}, a formulation as above can be obtained by setting 
$$
u_0 := \chi,\quad u_1 = (u,v,\theta,w)^T :=(u,\,u_\xi,\, u_{\xi\xi} + \tl f(\xi,u),\, (u_{\xi\xi} + \tl f(\xi,u))_{\xi})^T
$$
from which one finds
\begin{align}
\epsilon \fr{d}{d\xi}u_0 &= u_0^2 - 1,\\
\fr{d}{d\xi}u_1 &=A(c,\omega) u_1 + f_1(u_0,u_1;\gamma) u_1, \label{e:CHsd}  
\end{align}
with
\beq
A(c,\omega)  = \left(\begin{array}{cc}b_1& I_3 \\-\omega \p_\tau  & b_2(c)\end{array}\right),\quad
f_1(u_0,u_1;\mu)  = -\left(\begin{array}{cccc}0 & 0 & 0 & 0 \\1-u_0 +\gamma u^2 - u^4 & 0 & 0 & 0 \\0 & 0 & 0 & 0 \\0 & 0 & 0 & 0\end{array}\right),  
\eeq
where $0<\epsilon\ll1$, $I_3$ is the three dimensional identity matrix, $b_1 = (0,1,0)^T,$ and $b_2(c) = (c,0,0)$. This is then an ill-posed evolution equation on the Banach space $X = \R\times H^3(\mb{T})\times  H^2(\mb{T})\times H^1(\mb{T})\times  L^2(\mb{T})$, where the linear operator $A$ has domain $Y = \R\times H^4(\mb{T})\times  H^3(\mb{T})\times H^2(\mb{T})\times  H^1(\mb{T})$.  By setting $\mu = (c - c_\mathrm{p},\omega - \omega_\mathrm{p})$, we obtain a system of the form given in \eqref{e:sys2} above.  In this form, $u_\rpr$ corresponds to a heteroclinic orbit contained in the $\{u_1=0\}$ subspace, while $u_\mathrm{ff}$ is a heteroclinic orbit contained in the $\{u_0 = 1\}$ subspace.  Here, the $S^1$-action arises as a time-shift symmetry $\tau\mapsto \tau-\theta$.

\subsection{Spectral hypotheses}\label{ss:spechyp}
Next, we state our spectral hypotheses for the relative equilibria $U_*, \tl U_*,$ and $U_p$ of \eqref{e:sys2}. It follows from the compact embedding of $Y_1\subset X_1$ that the spectra of $D_UF$, evaluated at each of these, consists of isolated eigenvalues of finite multiplicity. We thus assume the following,

\begin{Hypothesis}\label{h:eig1}

\begin{enumerate}
\item The linearization of $F$ about $U_*$ at $\mu =  0$ has the following properties:
\begin{itemize}
\item The operator $D_UF(U_*;0)$ has algebraically simple eigenvalues $\nu_\rss = -r_\rss \pm \ri \sigma_\rss, \,\,\nu_\rsu = -r_\rsu\pm \ri \sigma_\rsu$ such that $r_\rss> r_\rsu>0,\, \sigma_\rss,\sigma_\rsu\neq0$, and all other $\nu\in \Sigma\lp(D_UF(U_*;\mu)\rp)$ satisfy either $\rre\{\nu\}>-r_\rsu$ or $\rre\{\nu\}<-r_\rss$. 
\item $D_{u_0}f_0(0,0)$ has a real unstable eigenvalue $\nu_\ru  = r_\ru >0$ which satisfies $r_\ru>2r_\rss - r_\rsu$. 
\end{itemize}
\item The periodic orbit $U_p$ is hyperbolic.  That is the linearization about $U_p$ has spectrum bounded away from the imaginary axis except for a simple Floquet exponent, located at $0\in \C$. 
\item  The spectrum $\Sigma(D_UF(\tl U_*);0)$ is bounded away from the imaginary axis. That is, there exists a $\gamma>0$ such that all eigenvalues satisfy $|\rre\{\nu\}|>\gamma$.
 \end{enumerate}

\end{Hypothesis}

\begin{figure}[h!]\label{f:not}
\centering
\includegraphics[width=0.43\textwidth]{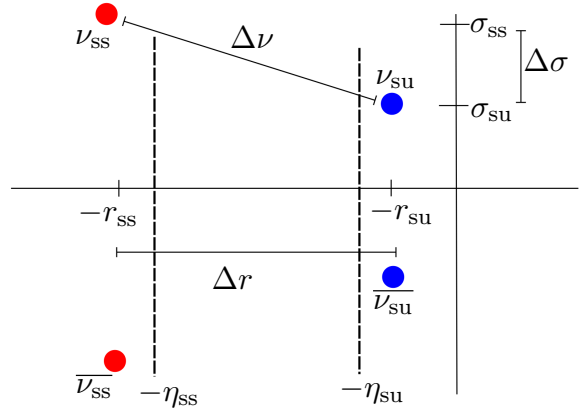}
\caption{Schematic diagram of notation for leading eigenvalues and relevant quantities.}
\end{figure}
Hypothesis \ref{h:eig1}(i) encodes the spectral splitting corresponding to the leading order decay of the free pushed front and also describes the decay of the preparation front in backwards time, requiring that it decays with a fast rate. This will aid in our analysis and is not restrictive in our results since we imagine such a front to be controlled by the experimenter or an outside mechanism.  For instance, for the examples given in Section \ref{s:ex}, one could obtain such a fast decay by tuning $\epsilon>0$ to be sufficiently small.  Hypothesis \ref{h:eig1}(ii) readily gives that $U_p$ is not degenerate with respect to perturbations in $\mu$. Hypotheses \ref{h:eig1}(iii) reflects the fact that the state $\tl U_*$, which corresponds to the asymptotic state ahead of the trigger, is typically PDE-stable. 

\begin{Remark}\label{r:cc}
These spectral hypotheses would need to be adapted if the PDE from which the system originated possessed any conserved quantities or additional symmetries.  We briefly discuss how our results would change for these cases in Section \ref{s:dis}. 
\end{Remark}

Using the sectoriality of the decomposition of $A$, we can define spectral projections $P_{1,\infty}^{\rss/\rsu}$ to obtain eigenspaces, $E_{1,\infty}^{\rss}$ and $E_{1,\infty}^{\rsu}$ of $D_UF(U_*)$ which are associated with the spectral splitting  in Hypothesis \ref{h:eig1}(i). These spaces have the decomposition
$$
E_{1,\infty}^{\rss/\rsu} = E_{1,\infty}^{\rss/\rsu,\rl} + E_{1,\infty}^{\rss/\rsu,\rs/\ru},
$$
where ``$\rl $" denotes the 2-dimensional eigenspaces corresponding to the leading eigenvalues $\nu_{\rss/\rsu}$ and ``$\rs/\ru$" denote the eigenspaces corresponding to the spectral sets $\{\nu<-r_\rss\}$ and $\{\nu>-r_\rsu\}$ respectively.  Also, let $e_{1,\infty}^{\rss/\rsu}\in  E_{1,\infty}^{\rss/\rsu,\rl}$ denote the unit-normed complex eigenvectors of $D_UF(U_*)$ associated with the eigenvalues $\nu_{\rss/\rsu}$, and let $e_{j,\infty}^*$ denote the complex eigenvector of the adjoint linearization $-D_UF(U_*)^*$ with eigenvalue $-\overline{\nu_\rsu}$.

From these spectral hypotheses we have the following result on locally invariant manifolds around $U_*$.
\begin{Lemma}\label{l:locmfld}
According to the spectral splitting 
$$
\Bigg(\Sigma\lp(D_UF(U_*;0)\rp)\cap \lp\{\rre\,\,\nu\leq -r_\rss\rp\}\Bigg)\,\,\bigcup \,\,\Bigg(\Sigma\lp(D_UF(U_*;0)\rp)\cap \lp\{\rre\,\,\nu\geq -r_\rsu\rp\}\Bigg),
$$
 the system \eqref{e:sys2} possesses locally-invariant manifolds $W^\rss_\mathrm{loc}(U_*)$ and $W^\rsu_\mathrm{loc}(U_*)$ which are $C^k$- and $C^1$-smooth respectively.  Furthermore, the periodic orbit $U_p$ and equilibrium $\tl U_*$ possess $C^k$-smooth locally invariant manifolds $W^\rcu_\mathrm{loc}(U_p)$, and $W^\rs_\mathrm{loc}(\tl U_*)$.
\end{Lemma}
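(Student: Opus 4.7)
The plan is to construct each of the four local invariant manifolds by the Lyapunov--Perron fixed-point method adapted to the semilinear ill-posed evolution equation \eqref{e:sys2}. Sectoriality of $A^\rs = AP$ and $-A^\ru = -(I-P)A$ provides analytic semigroups $\re^{\xi A^\rs}$ on $\mathrm{Range}(P)$ for $\xi\geq 0$ and $\re^{-\xi A^\ru}$ on $\mathrm{Range}(I-P)$ for $\xi\geq 0$, with exponential estimates matching the spectral bounds of Hypothesis \ref{h:eig1}(i). Combined with the finite-dimensional dichotomy of $D_{u_0}f_0(0;0)$ on $X_0$, this yields an exponential trichotomy for $D_UF(U_*;0)$ on $X=X_0\times X_1$ adapted to the three spectral clusters $\{\rre\,\nu\leq -r_\rss\}$, $\{-r_\rss<\rre\,\nu\leq -r_\rsu\}$ and $\{\rre\,\nu>-r_\rsu\}$. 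I would first modify the nonlinearity by a smooth cutoff supported in a ball $B_\delta(U_*)$, so that the modified system is globally Lipschitz with constant $\mc{O}(\delta)$ and agrees with \eqref{e:sys2} on $B_{\delta/2}(U_*)$; the desired local manifolds are then obtained by intersecting the global invariant manifolds of the cutoff system with $B_{\delta/2}(U_*)$.

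For $W^\rss_\mathrm{loc}(U_*)$, I would fix an exponential weight $\eta\in(r_\rsu,r_\rss)$ and look for solutions $U\in C([0,\infty),X)$ of the modified equation satisfying $\sup_{\xi\geq 0}\re^{\eta\xi}\|U(\xi)\|_X<\infty$. Using the spectral projection onto $E^\rss_{1,\infty}$ and its complement in $X_1$, together with the stable/unstable projections in $X_0$, such solutions are exactly the fixed points of a Lyapunov--Perron operator in which the prescribed datum lies in $E^\rss_{1,\infty}$, the strong-stable part of the nonlinear integral runs forward from $0$, and the complementary parts run backward from $+\infty$. A standard contraction argument in the weighted supremum norm yields a unique Lipschitz solution, and hence a Lipschitz graph over $E^\rss_{1,\infty}$ whose image is $W^\rss_\mathrm{loc}(U_*)$. $C^k$-smoothness is then obtained by differentiating the fixed-point equation up to $k$ times and applying a fiber-contraction in nested weighted spaces with weights $j\eta$ for $1\leq j\leq k$; the spectral separation in Hypothesis \ref{h:eig1}(i) supplies the non-resonance inequalities that keep the contraction uniform at every differentiation level. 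The strong-unstable manifold $W^\rsu_\mathrm{loc}(U_*)$ is obtained symmetrically by reversing $\xi$, parametrised as a graph over $E^\rsu_{1,\infty}$ together with the one-dimensional $u_0$-unstable eigenspace; because the relevant spectrum accumulates immediately to the right of $-r_\rsu$ without an interior gap, only a single fiber-contraction is performed, giving the claimed $C^1$-regularity.

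The manifolds $W^\rs_\mathrm{loc}(\tl U_*)$ and $W^\rcu_\mathrm{loc}(U_p)$ then follow from standard invariant-manifold results for semilinear sectorial evolutions. Hypothesis \ref{h:eig1}(iii) makes $\tl U_*$ hyperbolic, so the classical stable manifold theorem (Henry, Bates--Lu--Zeng) produces a $C^k$-smooth $W^\rs_\mathrm{loc}(\tl U_*)$. For $U_p$, Hypothesis \ref{h:eig1}(ii) asserts normal hyperbolicity with the single neutral Floquet exponent at $0$ forced by the $S^1$-symmetry of Hypothesis \ref{h:sym}; quotienting by this symmetry or, equivalently, passing to a Poincar\'e section transverse to the group orbit reduces the construction of $W^\rcu_\mathrm{loc}(U_p)$ to a centre-unstable manifold theorem for a hyperbolic fixed point of a $C^k$ return map, to which the same Lyapunov--Perron scheme applies. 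The main technical obstacle is keeping the contraction uniform across the coupled $(u_0,u_1)$-decomposition in the construction of $W^\rss_\mathrm{loc}(U_*)$: the unstable $u_0$-direction couples into $u_1$ through $f_1(u_0,u_1;\mu)$, and the differentiated Lyapunov--Perron operators produce convolution integrals involving products of semigroup estimates that must remain dominated by the weight $\re^{-k\eta\xi}$. The inequality $r_\ru>2r_\rss-r_\rsu$ is precisely the spectral-gap condition required for these integrals to remain finite up to order $k$, so it is this gap, rather than sectoriality alone, that delivers the full $C^k$-smoothness in the statement.
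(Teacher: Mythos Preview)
Your Lyapunov--Perron outline is exactly what the references the paper cites (Henry \cite{henry81}, Vanderbauwhede--Iooss \cite{Iooss92}) carry out, so the approach matches the paper's one-line proof and the constructions of the four manifolds are correctly sketched.

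There is, however, a misattribution in your final paragraph. The inequality $r_\ru > 2r_\rss - r_\rsu$ from Hypothesis~\ref{h:eig1}(i) plays no role in the $C^k$-smoothness of $W^\rss_\mathrm{loc}(U_*)$. Strong-stable manifolds are always as smooth as the nonlinearity: in the differentiated Lyapunov--Perron operator the backward propagator on the complementary space $E^\rsu_{1,\infty}$ is bounded by $C\re^{-r_\rsu(\xi-s)}$ for $\xi\leq s$, and this bound is governed by the \emph{leftmost} point $-r_\rsu$ of that spectral cluster, not by individual eigenvalues further to the right such as $r_\ru$. The condition $r_\ru > 2r_\rss - r_\rsu$ is instead used much later in the paper, in the gluing and bifurcation analysis (Proposition~\ref{p:gl} and Lemma~\ref{l:scp}), to ensure that the preparation front $q_2$ decays toward $U_*$ fast enough for $\langle q_2(-L), e_j^*(L)\rangle$ and related terms to fall into the $\mathcal{O}(\re^{-(2\Delta\eta+\rho)L})$ remainder. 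Similarly, the $C^1$-only regularity of $W^\rsu_\mathrm{loc}(U_*)$ is not due to spectral accumulation inside the cluster but to the standard pseudo-unstable gap condition $k r_\rsu < r_\rss$, which under the sole assumption $r_\rss>r_\rsu$ forces only $k=1$; the paper's proof makes exactly this remark.
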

\begin{proof}
This follows by standard results on infinite dimensional locally invariant manifolds (see \cite{henry81} or \cite{Iooss92}). We also mention that higher degrees of smoothness of $W^\rsu_\mathrm{loc}(U_*)$ can be obtained if the spectral gap $\Delta\eta = r_\rss - r_\rsu$ is sufficiently large.
\end{proof}

Next we state our assumptions on the heteroclinic orbits formed by the preparation and pushed free fronts.
\begin{Hypothesis}\label{h:fa}
For $\mu = 0$, there exist $C^k$-smooth heteroclinic solutions $q_i^0(\xi)$ of \eqref{e:sys2} for $i = 1,2$ such that for some $S>0$ sufficiently large
\begin{itemize}
\item $q_1^0(\xi)\in \{\{0\}\times X_1\}$ and $q_2(\xi)\in\{X_0\times\{0\}\}$ for all $\xi\in \R$,
\item  $\{q_1^0(\xi)\}_{\xi\in [S,\infty)} \subset W^\rss_\mathrm{loc}(U_*)$,
\item  $\{q_1^0(\xi)\}_{\xi\in (-\infty,-S]} \subset W^\rcu_\mathrm{loc} (U_p)$,
\item $\{q_2^0(\xi)\}_{\xi\in (-\infty,-S]} \subset W^\rsu_\mathrm{loc}(U_*)$,
\item $\{q_2^0(\xi)\}_{\xi\in [S,\infty)} \subset W^\rs_\mathrm{loc}(\tl U_*)$.
\end{itemize}

Furthermore, for some $\epsilon>0$ small, there exist $a, b\in \C$ such that $q_i^0$ has the following asymptotics,
\begin{align}
q_1^0(\xi)  &=  a \re^{\nu_\rss \xi}e_{1,\infty}^{\rss,\rl} + \mathrm{c.c.} + \mc{O}(\re^{-(r_\rss + \epsilon) \xi}), \quad\text{as}\quad \xi\rightarrow +\infty,\notag\\
q_2^0(\xi)  &=  b \re^{\nu_\ru \xi}e_{1,\infty}^{\ru} +\mc{O}(\re^{( r_\ru+ \epsilon) \xi}), \quad\text{as}\quad \xi\rightarrow -\infty,\notag
\end{align}
 where $\mathrm{c.c.}$ stands for complex-conjugate and the vectors $e_{1,\infty}^{\rss,\rl}\in E_{1,\infty}^{\rss,\rl}$, and $e_{1,\infty}^\ru\in E_{1,\infty}^{\ru}$ have unit-norm and are complex eigenvectors of the linearization $D_UF(U_*)$ associated with the leading eigenvalues $\nu_\rss$ and $\nu_\ru$ respectively. 
 
Finally, the orbit $q_2^0$ is robust to perturbations in $\mu$.  That is there exists a smooth family of heteroclinic orbits $q_2(\xi,\mu)$ satisfying the above properties for all $|\mu|\ll 1$ and $q_2(\xi;0) = q_2^0(\xi)$.  
\end{Hypothesis}

The second part of this hypothesis states that, to leading order, $q_1^0$ and $q_2^0$ approach $U_*$ along the leading eigenspaces $E^{\rss, \rl}_{1,\infty}$ and $E^{\ru,\rl}_{1,\infty}$ as $\xi\rightarrow \pm \infty$ respectively. In this notation, $q_1^0$ denotes the pushed free front, while $q_2$ denotes the preparation front.

\subsection{Invariant manifolds and variational set-up}\label{ss:invvar}
We now construct global invariant manifolds in neighborhoods of the heteroclinic orbits $q_1^0$ and $q_2^0$.  To do so we define variations, $w_i^0(\xi) =  U(\xi)- q_i^0(\xi)$, about such orbits with $i = 1,2$, and study the variational equations
\beq\label{e:var0}
\fr{d}{d\xi} w_i = A_i(\xi) w_i^0 + g_i^0(\xi,w_i),\qquad \xi\in \R,
\eeq
with
$$
A_i(\xi) := D_UF(q_i^0(\xi);0), \quad g_i^0(\xi,w_i) := F(q_i^0(\xi) + w_i; \mu) - F(q_i^0(\xi);0) - A_i(\xi) w_i^0.
$$ 
In order to study these variations we shall use exponential dichotomies of the linear variational equations and their adjoints 
\begin{align}
\fr{d}{d\xi} w &= A_i(\xi) w,\label{e:expd}\\
\fr{d}{d\xi} \psi &= -A_i(\xi)^* \psi,\qquad i = 1,2.\label{e:adjvar}
\end{align}
Before doing so, we require the following well-posedness assumption.
\begin{Hypothesis}\label{h:ex}
For both $i = 1,2$, if $w_0(\xi)$ is a bounded solution of either of the linear variational equations \eqref{e:expd} or \eqref{e:adjvar}
 for all $\xi\in \R$ and $w_0(\xi_0) = 0$ for some $\xi_0 \in \R$, then $w_0\equiv 0$.
\end{Hypothesis}
We remark that for finite-dimensional systems and many parabolic equations this hypothesis holds via parabolic regularity results \cite{chen1998strong,Sandstede01}.

\begin{Proposition}{(Existence of Exponential Dichotomies)}\label{p:expdic}
Assuming the above hypotheses, \eqref{e:expd} has exponential dichotomies on $J_1 = \R^+,\, J_2 = \R^-$ with a splitting according to the eigenspaces $E_{1,\infty}^{\rss/\rsu}$ given above. That is there exist projections $P_i^{\rss/\rsu}(\xi):X\rightarrow X$ for $\xi\in J_i$ such that the following  holds for some $K>0$:
\begin{itemize}
\item
For any $\zeta\in J_i$ and $u\in X$, there exists a solution $\Phi_i^\rss(\xi,\zeta) u$ of \eqref{e:expd} defined for $\xi\geq \zeta$, continuous in $(\xi, \zeta)$ for $\xi\geq \zeta$, and differentiable in $(\xi,\zeta)$ for $\xi>\zeta$, such that $\Phi_i^\rss(\zeta,\zeta) u = P_i^\rss(\zeta) u$ and
\beq
|\Phi_i^\rss(\xi,\zeta) u| \leq K \,\re^{-r_\rss (\xi - \zeta)}|u|, \quad \xi\geq \zeta\
\eeq
\item 
For any $\zeta\in J_i$ and $u\in X$, there exists a solution $\Phi_i^\rsu(\xi,\zeta) u$ of \eqref{e:expd} defined for $\xi\leq \zeta$, continuous in $(\xi, \zeta)$ for $\xi\leq \zeta$, and differentiable in $(\xi,\zeta)$ for $\xi<\zeta$, such that $\Phi_i^\rsu(\zeta,\zeta) u = P_i^\rsu(\zeta) u$ and
\beq
|\Phi_i^\rsu(\xi,\zeta) u| \leq K \,\re^{-r_\rsu(\xi - \zeta)}|u|, \quad \xi\leq \zeta.
\eeq
\item The solutions $\Phi_i^\rss(\xi,\zeta) u$ and $\Phi_i^\rsu(\xi,\zeta) u$ satisfy
\begin{align}
\Phi_i^\rss(\xi,\zeta) u &\in R(P_i^\rss(\xi)) \quad \text{for all}\qquad \xi\geq \zeta, \qquad  \xi,\zeta \in J_i,\notag\\
\Phi_i^\rsu(\xi,\zeta) u & \in R(P_i^\rsu(\xi)) \quad \text{for all} \qquad \xi \leq \zeta,\qquad \xi,\zeta \in J_i,\notag
\end{align}
\end{itemize}
where $|\cdot|$, unless otherwise stated, denotes the norm on $X$.
\end{Proposition}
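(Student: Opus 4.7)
The proof is a standard infinite-dimensional dichotomy construction for an asymptotically autonomous equation, built on three ingredients: (a) an exponential dichotomy for the autonomous limiting equation $\dot w = D_UF(U_*;0)w$, (b) a roughness/perturbation argument on a half-line where $q_i^0$ is close to $U_*$, and (c) a finite-time extension of the dichotomy to the full half-line $J_i$. I would follow the template developed in \cite{Sandstede01} for ill-posed spatial dynamics, adapted to the present splitting $\{\Re\nu\le -r_\rss\}\cup\{\Re\nu\ge -r_\rsu\}$.

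First I would construct the asymptotic dichotomy. The assumption that $A^\rs=AP$ and $-A^\ru=-(1-P)A$ are sectorial, together with Hypothesis 3.1(i), gives a spectral gap in the strip $-r_\rss<\Re\nu<-r_\rsu$ for $D_UF(U_*;0)$. Using the Dunford functional calculus on each sectorial piece, one obtains complementary spectral projections $P_\infty^\rss$ and $P_\infty^\rsu=I-P_\infty^\rss$ onto the closed invariant subspaces corresponding to $\{\Re\nu\le -r_\rss\}$ and $\{\Re\nu\ge -r_\rsu\}$ respectively, together with analytic semigroups $\re^{D_UF(U_*;0)\xi}P_\infty^\rss$ on $\xi\ge 0$ obeying the bound $\re^{-r_\rss\xi}$, and $\re^{D_UF(U_*;0)\xi}P_\infty^\rsu$ on $\xi\le 0$ obeying the bound $\re^{-r_\rsu|\xi|}$ (the second is well defined because the restriction of $D_UF(U_*;0)$ to $R(P_\infty^\rsu)$ is a bounded perturbation of a sectorial operator generating an analytic semigroup in backward time on the appropriate subspace). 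This is the desired exponential dichotomy on all of $\R$ for the autonomous limit.

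Next I would pass to the variational equations. By Hypothesis 3.5, $q_1^0(\xi)\to U_*$ exponentially as $\xi\to+\infty$ and $q_2^0(\xi)\to U_*$ exponentially as $\xi\to -\infty$, so $\|A_i(\xi)-D_UF(U_*;0)\|\to 0$ exponentially on the corresponding tail. The roughness theorem for exponential dichotomies in Banach spaces (see \cite{Sandstede01}, based on a fixed-point argument on the perturbation of the Green's kernel) then produces, for some $\xi_0$ sufficiently large, an exponential dichotomy on $[\xi_0,\infty)$ for $i=1$ and on $(-\infty,-\xi_0]$ for $i=2$, with the same exponential rates $r_\rss,r_\rsu$ and with projections $P_i^{\rss/\rsu}(\xi)$ converging to $P_\infty^{\rss/\rsu}$ as $|\xi|\to\infty$.

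Finally I would extend the dichotomy across the compact interval $[0,\xi_0]$ (resp.\ $[-\xi_0,0]$) to obtain it on all of $J_i$. On the stable range, one defines $\Phi_i^\rss(\xi,\zeta)$ for $0\le\zeta\le\xi\le\xi_0$ by the forward mild evolution $\re^{A^\rs(\xi-\zeta)}$ perturbed by the bounded term $f_1$ and the unstable block via variation of constants; this forward evolution is well posed on the stable subspace by sectoriality. On the unstable range, one uses backward propagation on $R(P_i^\rsu(\xi))$ in the same manner. The projections $P_i^{\rss/\rsu}(\xi)$ on $[0,\xi_0]$ are then defined by pulling back $P_i^{\rss/\rsu}(\xi_0)$ along the cocycle. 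Consistency of this extension — namely that the resulting $R(P_i^\rss(\xi))$ is still complementary to $R(P_i^\rsu(\xi))$ and that the exponential estimates survive (with possibly larger constant $K$, same rates) — relies on Hypothesis 3.7, which says no nontrivial bounded solution vanishes at a point, so that the transversality of the stable/unstable ranges at $\xi_0$ is preserved under the finite-time flow. The main obstacle is precisely this last point: because the equation is ill posed, one cannot simply ``flow'' the dichotomy, and the extension must be done separately on each invariant subspace while verifying that the two resulting families of projections together still span $X$ and are uniformly bounded on the compact interval; the unique-continuation assumption is exactly what rules out the degeneration that would otherwise be possible.
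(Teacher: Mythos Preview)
Your proposal is correct and follows precisely the construction in \cite{Peterhof97} and \cite{Sandstede01}, which is all the paper invokes for this proposition. The three-step outline (autonomous dichotomy from sectoriality and the spectral gap, roughness on the tail where $A_i(\xi)\to D_UF(U_*;0)$, and finite-interval extension using the unique-continuation Hypothesis~\ref{h:ex}) is exactly the argument those references carry out.
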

\begin{proof}
See \cite{Peterhof97} or \cite{Sandstede01}.
\end{proof}
The first two bullets of this proposition correspond to the usual stable-unstable dichotomy when considered in an weighted norm $||u||_{\eta}:=\sup_{\xi\in J_i} \re^{\eta \xi}|u(\xi)|$ with $0<r_\rsu < \eta< r_\rss$.   

Let us denote $E_i^j(\xi) = P_i^j(\xi)X$, for $i = 1,2$ and $j = \rss,\rsu$.  Also, as they will be necessary to the subsequent analysis, we isolate the leading components of these $\xi$-dependent subspaces as follows
\beq\label{e:spl}
E_{i}^\rss(\xi) = E_{i}^{\rss,\mathrm{l}}(\xi) +  E_{i}^{\rss,\rs}(\xi),\qquad E_{i}^\rsu(\xi) = E_{i}^{\rsu,\mathrm{l}}(\xi) +  E_{i}^{\rsu,\ru}(\xi), \quad i = 1,2,
\eeq
such that the spaces $E_{i}^{\rss/\rsu,\mathrm{l}}(\xi) $ are unique and satisfy
\begin{align}
E_{1}^{\rss/\rsu,\mathrm{l}}(\xi) &\rightarrow E_{1,\infty}^{\rss/\rsu,\mathrm{l}}\qquad \xi\rightarrow \infty ,\notag\\
E_{2}^{\rss/\rsu,\rs/\ru}(\xi) &\rightarrow E_{1,\infty}^{\rss/\rsu,\rs/\ru}\qquad \xi\rightarrow -\infty.
\end{align}
Such a decomposition of, say for example $E^{\rss}_2(\xi)$, can be achieved by first obtaining exponential dichotomies associated with the spectral sets $\{\nu\,:\,\rre\{\nu\} \leq -r_\rss\}, \, \{\nu\,:\,\rre\{\nu\} \geq -r_\rss\}$ and then taking the intersection of their associated $\xi$-dependent subspaces.  Denote the resulting dichotomies of these restricted subspaces as $\Phi^{\rss/\rsu,\rl}_{i}$ for $i = 1,2$.

From Proposition \ref{p:expdic}, we are then able to obtain the existence of globally invariant manifolds in a neighborhood of $q_i^0(\xi)$ for all $\xi\in J_i$, and $\mu$ sufficiently small.

\begin{Proposition}\label{p:mfld}
For all $\mu$ sufficiently small the equilibrium \eqref{e:sys2} possesses strong stable and weak-stable/unstable invariant manifolds $W^\rss(U_*)$ and $W^\rsu(U_*)$ which exist in a neighborhood of the orbits $q_1^0$ and $q_2^0$ respectively.  Furthermore, $W^\rss(U_*)$ is $C^k$-smooth while $W^\rsu(U_*)$ is in general only $C^1$-smooth.  In an exponentially weighted space with weight $\eta\in(r_\rsu,r_\rss)$, $W^\rss(U_*)$ contains all solutions which stay close to $q_1^0(\xi)$ for all $\xi\geq 0$ while $W^\rsu(U_*)$ contains all solutions which stay close to $q_2^0(\xi)$ for all $\xi\leq 0$. Finally, these manifolds are smooth in the parameter $\mu$ and have tangent spaces which satisfy, for $\mu = 0$,
$$
T_{q_i^0(\xi)} W^{j}(U_*) = E^j_i(\xi),\qquad j = \rss,\rsu,\,\, i = 1,2.
$$
\end{Proposition}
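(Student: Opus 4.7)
The plan is to construct both manifolds via a Lyapunov--Perron fixed-point argument applied to the variational equation \eqref{e:var0}, leveraging the exponential dichotomies from Proposition \ref{p:expdic}. I describe the construction of $W^\rss(U_*)$ near $q_1^0$ in detail; the construction of $W^\rsu(U_*)$ near $q_2^0$ is analogous with the roles of the two dichotomies interchanged and the time axis reversed.

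For prescribed initial data $w_0 \in E_1^\rss(0)$ and $\xi \geq 0$, I rewrite the variational equation as the integral equation
\begin{equation*}
w(\xi) = \Phi_1^\rss(\xi,0)\,w_0 + \int_0^\xi \Phi_1^\rss(\xi,\zeta)\,\tilde g(\zeta,w(\zeta);\mu)\,d\zeta - \int_\xi^\infty \Phi_1^\rsu(\xi,\zeta)\,\tilde g(\zeta,w(\zeta);\mu)\,d\zeta,
\end{equation*}
where $\tilde g(\zeta,w;\mu) := F(q_1^0(\zeta)+w;\mu) - F(q_1^0(\zeta);0) - A_1(\zeta)\,w$ vanishes quadratically in $w$ and linearly in $\mu$. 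On the weighted space $Y_\eta := \{w \in C^0([0,\infty),X) : \|w\|_\eta := \sup_{\xi \geq 0} \re^{\eta\xi}|w(\xi)| < \infty\}$ with $\eta \in (r_\rsu, r_\rss)$, the exponential decay estimates from Proposition \ref{p:expdic} show that the right-hand side defines an operator $\mc{T}: Y_\eta \to Y_\eta$ which is a uniform contraction on a small ball for $|w_0|$ and $|\mu|$ sufficiently small. The unique fixed point $w^*(\,\cdot\,;w_0,\mu)$ depends $C^k$-smoothly on the parameters via the standard implicit-function-theorem version of the contraction principle, yielding the parametrization
\begin{equation*}
W^\rss(U_*) = \{\, q_1^0(\xi) + w^*(\xi;w_0,\mu) \,:\, w_0 \in B_\delta(0) \subset E_1^\rss(0) \,\}.
\end{equation*}
The tangent-space identity $T_{q_1^0(\xi)} W^\rss(U_*) = E_1^\rss(\xi)$ at $\mu = 0$ is immediate from the fact that the linearization of $\mc{T}$ at $w=0$ reproduces the dichotomy splitting.

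For $W^\rsu(U_*)$, an analogous Lyapunov--Perron integral equation is set up on $(-\infty,0]$ using $\Phi_2^\rsu$ as the propagator of initial data and $\Phi_2^\rss$ in the backward-time Duhamel convolution, parametrized by data in $E_2^\rsu(0)$. The reduction of regularity from $C^k$ to $C^1$ reflects that $W^\rsu$ is a weakly-hyperbolic, center-unstable type manifold: higher-order smoothness would require nonresonance conditions between iterated multiples of the weak rate $r_\rsu$ and the strong rate $r_\rss$ which are not part of the hypotheses. The spectral condition $r_\ru > 2r_\rss - r_\rsu$ in Hypothesis \ref{h:eig1}(i) is, however, strong enough to ensure that the fixed-point operator is $C^1$ in $(w_0,\mu)$ and a strict contraction in the weighted norm, which yields the $C^1$ regularity and smoothness in $\mu$.

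The main obstacle is the infinite-dimensional character of the problem: because $A$ is merely sectorial, one must carefully track the mapping properties of $\tilde g$ between $X$ and the domain $Y_1$, and verify that the dichotomy solution operators smooth the Duhamel integrands into continuous $X$-valued data compatible with the exponential weight $\eta$. A second delicate point is the construction of $W^\rsu$, where the coupling between the $u_0$-component (with unstable rate $r_\ru$) and the weakly-stable $u_1$-directions forces the weighted-norm estimates to be balanced precisely against the condition $r_\ru > 2r_\rss - r_\rsu$; without this gap, the contraction in the $C^1$-topology would fail. Once both fixed points are obtained, smoothness in $\mu$ and the tangent-space assertion follow from differentiating the fixed-point identity and matching with the linear dichotomy spaces.
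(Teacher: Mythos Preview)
Your Lyapunov--Perron construction via the variation-of-constants formula and the dichotomies of Proposition~\ref{p:expdic} is essentially the same approach as the paper's, which simply cites the relevant references (\cite{Sandstede99,Peterhof97,Iooss92}) for exactly this argument; indeed, the integral equation you write down reappears verbatim in the paper as \eqref{e:sstrj}. So the core of your proof is correct and matches the paper.

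There is, however, a conceptual error in your discussion of the $C^1$ regularity of $W^\rsu(U_*)$. You claim that the contraction in the $C^1$-topology hinges on the condition $r_\ru > 2r_\rss - r_\rsu$ from Hypothesis~\ref{h:eig1}(i), via the coupling between the $u_0$-direction and the weakly stable $u_1$-directions. This is not right. The $u_0$-equation is decoupled (it does not see $u_1$), and the unstable rate $r_\ru$ plays no role in the contraction estimates for the weak manifold; the Lyapunov--Perron fixed point works for any $r_\ru>0$. The $C^1$-versus-$C^k$ limitation comes solely from the ratio $r_\rss/r_\rsu$: a center-type manifold is $C^k$ only if $k\,r_\rsu < r_\rss$, as the paper notes after Lemma~\ref{l:locmfld}. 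The large-$r_\ru$ condition is used much later, in Lemma~\ref{l:scp}, to ensure that the heteroclinic term $\langle q_2(-L),e_j^*(L)\rangle$ is higher order in the bifurcation expansion; it has nothing to do with the existence or smoothness of $W^\rsu$. You should remove that paragraph and simply attribute the $C^1$ regularity to the standard spectral-gap obstruction.
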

\begin{proof}
This proof follows in the same way as those in \cite[Sec. 3.5]{Sandstede99} which use the existence of exponential dichotomies from \cite[Thm. 3.3.3]{Peterhof97} and infinite-dimensional center manifold results of \cite{Iooss92}; see also \cite{Sandstede01,sandstede2004defects}.
\end{proof}
In an analogous fashion, one may use the spectral properties of the linearization about $U_p$ and $\tl U$ to obtain the following proposition,
\begin{Proposition}\label{p:mfld1}
For all $\mu$ sufficiently small, the equilibria $U_p$ and $\tl U_*$ of \eqref{e:sys2} possess $C^k$-smooth center-unstable and stable manifolds, denoted as $W^\rcu(U_p)$ and $W^\rs(\tl U_*)$, which exist in a neighborhood of the orbits $q_1^0$ and $q_2^0$, and are smooth in the parameter $\mu$.   Here, $W^\rcu(U_p)$ contains all solutions which stay close to $q_1^0(\xi)$ for all $\xi\leq 0$ and $W^\rs(\tl U_*)$ contains those which stay close to $q_2^0(\xi)$ for all $\xi\geq 0$.  \end{Proposition}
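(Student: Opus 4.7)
The plan is to mirror the construction used in Proposition \ref{p:mfld}, replacing the asymptotic equilibrium $U_*$ with $U_p$ on the $\xi\to-\infty$ side of $q_1^0$ and with $\tilde U_*$ on the $\xi\to+\infty$ side of $q_2^0$. In both cases the key input is an exponential dichotomy of the variational equation $\frac{d}{d\xi}w=A_i(\xi)w$ on the relevant half-line, which by roughness inherits its structure from the dichotomy of the asymptotic linearization. Once dichotomies are in place, the invariant manifolds are obtained as graphs of fixed points of a Lyapunov--Perron integral equation in an appropriate exponentially weighted space, exactly as in \cite{Peterhof97,Sandstede01,sandstede2004defects,Iooss92}; the ill-posed infinite-dimensional setting is accommodated by restricting attention to forward (resp.\ backward) bounded solutions, which are well defined via the dichotomy projections.

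For $W^\rs(\tilde U_*)$, Hypothesis \ref{h:eig1}(iii) gives a full hyperbolic splitting of $D_U F(\tilde U_*;0)$ with a spectral gap of width at least $\gamma$. Since $q_2^0(\xi)\to\tilde U_*(0)$ exponentially as $\xi\to+\infty$ inside $W^\rs_\mathrm{loc}(\tilde U_*)$, the variational equation along $q_2^0$ is an exponentially small perturbation of the constant-coefficient system at $\tilde U_*$, so roughness yields a stable/unstable exponential dichotomy on $[0,\infty)$. Setting up the Lyapunov--Perron equation
\beq
w(\xi)=\Phi^\rs(\xi,0)w_0+\int_0^\xi \Phi^\rs(\xi,\zeta)g_2(\zeta,w)\,d\zeta-\int_\xi^\infty \Phi^\rsu(\xi,\zeta)g_2(\zeta,w)\,d\zeta,
\eeq
in the space $C^0_\eta([0,\infty),X)$ with $0<\eta<\gamma$, the contraction mapping principle produces a $C^k$-smooth manifold of forward bounded solutions; smoothness in $\mu$ follows because $F$ and the dichotomies depend $C^k$-smoothly on $\mu$.

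For $W^\rcu(U_p)$, Hypothesis \ref{h:eig1}(ii) states that the Floquet spectrum of the linearization at $U_p$ has only a simple exponent at zero, arising from the $T_1$-symmetry, and is otherwise bounded away from the imaginary axis. This produces a center--unstable/stable trichotomy on a cross-section to $U_p$, which extends to an exponential dichotomy on $(-\infty,0]$ separating the strong-stable directions from the center--unstable ones; here the center direction is absorbed into the ``unstable'' projection, yielding an effective dichotomy with rates strictly less than the strong-stable rate. The same Lyapunov--Perron argument, now on $(-\infty,0]$, produces a $C^k$-smooth manifold of backward bounded solutions close to $q_1^0$; invariance under the flow and under $T_1$ identifies it with $W^\rcu(U_p)$.

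The main technical point is checking roughness of the dichotomies in the ill-posed infinite-dimensional case covered by the Cahn--Hilliard example: one must verify that the variational operators $A_i(\xi)$ remain admissible perturbations of their asymptotic limits in the sense of \cite{Peterhof97}, and that cut-offs of $g_i^0$ retain the required smoothness on the scale $Y_1\hookrightarrow X_1$. Once this is done, smoothness in $\mu$, identification of the tangent spaces with the ranges of the dichotomy projections, and the characterization in terms of forward/backward bounded solutions are routine corollaries of the fixed-point construction.
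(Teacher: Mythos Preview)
Your proposal is correct and follows essentially the same approach as the paper: the paper's proof simply notes that the spectral hypotheses on $U_p$ and $\tilde U_*$ yield the relevant center-unstable/stable and stable/unstable dichotomies along $q_1^0$ and $q_2^0$, then defers to the variation-of-constants construction of Proposition~\ref{p:mfld}. You have supplied considerably more detail (roughness, the explicit Lyapunov--Perron integral equation, handling of the center direction), but the strategy is identical.
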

\begin{proof}
The hypothesis on the linearizations at $U_p$ and $\tl U_*$ give the existence of center-unstable and stable dichotomies $\Phi_{-1}^{\rs/\rcu}$ along $q_1^0(\xi)$ for $\xi\leq0$, and stable and unstable dichotomies $\Phi_{+2}^{\rs/\ru}$ along $q_2^0(\xi)$ for $\xi\geq0$. As in Proposition \ref{p:mfld}, one can then use these dichotomies and variation of constants formulas to prove the above proposition.
\end{proof}

\subsection{Intersection hypotheses}\label{ss:int}
We wish to construct pushed trigger fronts as intersections between $W^\rcu(U_p)$ and $W^\rs(\tl U_*)$ near the equilibrium $U_*$ under certain conditions on the heteroclinic chain composed of $q_1^0$ and $q_2^0$. We first assume the tangent spaces of the invariant manifolds along $q_i^0(\xi)$ generically behave as a codimension-two heteroclinic bifurcation problem: 
\begin{Hypothesis}\label{h:tan}
\begin{enumerate}
\item The tangent spaces $T_{q_1^0(0)}W^\rcu(U_p),$ and $T_{q_1^0(0)}W^\rss(U_*)$ form a Fredholm pair with index 0, and satisfy
$$
\dim \lp(T_{q_1^0(0)}W^\rcu(U_p) +  T_{q_1^0(0)}W^\rss(U_*)\rp)^\perp = \dim \lp(T_{q_1^0(0)}W^\rcu(U_p) \cap  T_{q_1^0(0)}W^\rss(U_*)\rp) =  2.
$$ 
\item The tangent spaces $T_{ q_2^0(0)}W^\rsu(U_*),$ and $T_{q_2^0(0)} W^\rs(\tl U_*)$ intersect transversely, form a Fredholm pair of index 1, and satisfy 
$$
\dim\lp(T_{ q_2(0)}W^\rsu(U_*)\cap T_{ q_2(0)} W^\rs(\tl U_*)\rp)  = 1.
$$
\end{enumerate}
\end{Hypothesis}
These hypotheses enforce genericity on the heteroclinic orbits in the sense that $q_1^0$ can be transversely unfolded in the parameter $\mu$. In the setting of an evolutionary PDE with both time- and space-translational symmetries (like the Cahn-Hilliard equation mentioned above), $q_1^0$ is a modulated traveling wave with both $\tau$- and $\xi$-derivative lying in the intersection $T_{q_1^0(0)}W^\rcu(U_p) \cap  T_{q_1^0(0)}W^\rss(U_*)$, while $q_2^0$ lies in the subspace of time-independent functions and thus has only $\xi$-derivative lying in the intersection $T_{ q_2(0)}W^\rsu(U_*)\cap T_{ q_2(0)} W^\rs(\tl U_*)$.  See \cite[Sec. 4.3]{Goh11} for more discussion on this topic.  

We must also make an assumption on the inclination properties of the invariant manifolds between $U_*$ and $\tl U_*$. Let $P_{2,+}^\rsu(\xi)$ denote the projection in $X$ onto $E^\rsu_2(\xi)=T_{q_2^0(\xi)}W^\rsu(U_*)$ along $\tl E_{+2}^\rs(\xi)$, the orthogonal complement of $\fr{dq_2^0}{d\xi}(\xi)$ in $T_{q_2^0(\xi)}W^\rs(\tl U_*)$.  Such a projection can be constructed in the same manner as in \cite[Eqn. 3.20]{Peterhof97}. 
\begin{Hypothesis}(Inclination property)\label{h:inc}
The restricted projection 
$$
\mc{P}^\rl:= P_{2,+}^\rsu(0)\,\Big|_{E_2^{\rss,\rl}(0)}
$$
is an isomorphism from $E_2^{\rss,\rl}(0)$ onto $E_2^{\rsu,\rl}(0)$.
\end{Hypothesis}
We note that the equivariance of $F$ with respect to the $S^1$-action implies that $\mc{P}^\rl$ commutes with $T_1$.
This equivariance makes $\mc{P}^\rl$ complex linear when considered on the complexification of the subspaces $E^{\rss/\rsu,\rl}_2(0)$ and will enforce certain conditions on the coefficients of the bifurcation equation, see Section \ref{ss:exp} below.  Additionally, this hypothesis can be given a geometric interpretation when the invariant manifold $W^\rs(\tl U_*)$ about $q_2(\xi)$ can be extended for all $\xi\in \R$, as is the case when $X$ is finite dimensional.  In such a situation, this hypothesis says that $W^\rs(\tl U_*)$ converges towards the non-leading strong-stable eigenspace $E_{1,\infty}^{\rss,\rs}$ in backwards time and hence does not lie in an inclination-flip configuration \cite{homburg2010homoclinic}. 

\begin{Remark}\label{r:fsym}
Since $X$ is a Hilbert space, it can be decomposed as a sum of complex one-dimensional irreducible representations.  In the spatial-dynamics formulation for the Cahn-Hilliard equation for $u_0  \equiv 1$, this decomposition is simply the Fourier series
$$
U(t) = \sum_{\ell \in \Z} U_\ell \re^{\ri \ell \tau}, \quad U_\ell \in \R^4,
$$
and can be used to determine the spatial eigenvalues of the linearization of \eqref{e:CHsd} about the equilibrium $u_1 = 0$.  Replacing $\p_\tau$ by $\ri \ell$, and setting $(c,\omega) = (c_\mathrm{p}, \omega_\mathrm{p})$, the linearization can be broken down into a set of infinitely many finite-dimensional linear systems, whose eigenvalues $\nu_\ell$ satisfy
\beq\label{e:CHeig}
0=\nu_\ell^4 + f'(u_*)\nu_\ell^2 - c_\mathrm{p} \nu_\ell + \ri\omega_\mathrm{p}\ell, \quad \ell\in \Z,
\eeq
with corresponding eigenspaces lying in the subspaces
$$
Y_\ell   = \mathrm{span}_{U_\ell,U_{-\ell}\in \R^4}\{ U_\ell \re^{\ri\ell \tau},U_{-\ell} \re^{-\ri\ell \tau}\}.
$$
Hypothesis \ref{h:inc} then requires that each of the leading eigenspaces, $E_{1,\infty}^{\rss/\rsu,\rl}$, must lie in the same subspace $Y_\ell$ for some $\ell$.  If this was not true, we would obtain that the two irreducible representations $\theta\mapsto \re^{\ri \ell_{1}\theta}$,  and $\theta\mapsto \re^{\ri \ell_{2}\theta}$ for distinct $\ell_1$ and $\ell_2$ are isomorphic, a contradiction. 

\end{Remark}

As in other heteroclinic bifurcation problems, we must require the invertibility of a certain mapping constructed using Melnikov integrals. Hence for $j = 1,2$ we let $e_j^*(\xi)$ be bounded solutions of the adjoint variational equation \eqref{e:adjvar} such that $e_j^*(0) = e_{j,0}^*$ for vectors $e_{j,0}^*\in X$ with unit-norm which satisfy
$$
\text{span}_{j=1,2} \{ e_{j,0}^*\}=  \lp(T_{q_1(0)}W^\rcu(U_p) +  T_{q_1(0)}W^\rss(U_*)\rp)^\perp.
$$
We then assume the following 
\begin{Hypothesis}\label{h:mel}

The following mapping is invertible,
\begin{align}
\mc{M}:& \R^2\rightarrow (E_1^{\rss}(0)+E_{-1}^\rcu(0))^\perp,\notag\\
&\, \mu\mapsto \sum_{i = 1,2} \int_{-\infty}^\infty\langle D_\mu F(q_1^0(\zeta);0) \mu, e^*_j(\zeta) \rangle d\zeta \,\,e_{j,0}^*.\notag
\end{align}

\end{Hypothesis}


\subsection{Statement of main result}\label{ss:thm}

With all of these hypotheses in hand, we define the desired solution as follows,
\begin{Definition}\label{def:tf}
A \emph{pushed trigger front} is a heteroclinic orbit $U_\mathrm{tf}(\xi;\mu)$ of \eqref{e:sys2} which satisfies the following properties:
\begin{enumerate}
\item
$|U_\mathrm{tf}(\xi;\mu) -  U_p(\xi;\mu)| \rightarrow 0$ and $U_\mathrm{tf}$ converges along the invariant manifold $W^\rcu(U_p)$ as $\xi\rightarrow -\infty$ with asymptotic phase.
\item
 $U_\mathrm{tf}(\xi;\mu)\rightarrow \tl U_*(\mu)$ along the invariant manifold $W^\rs(U_*)$ as $\xi\rightarrow \infty$.
\end{enumerate}
\end{Definition}
Since we only discuss pushed trigger fronts in the rest of this work, we shall henceforth refer to such solutions as just trigger fronts.

\begin{Theorem}\label{t:m2}
Assume Hypotheses \ref{h:sym}--\ref{h:mel} and recall the definition of the eigenvectors $e_{1,\infty}^{\rsu,\rl}$, and $e_{j,\infty}^*$ from Section \ref{ss:spechyp}. Then, there are constants $\rho, L_*>0$ so that for all $L>L_*$ there exists a triggered pushed front $U_\mathrm{tf}(\xi;\mu_*(L))$, with bifurcation curve $\mu_*(L)$, which has the leading order expansion, 
\begin{align}
\mu_*(L) &= -\sum_{j=1,2} \lp[\re^{2\Delta \nu L} d_j + \mathrm{c.c.}\rp] \mc{M}^{-1}e_{j,0}^*+ \mc{O}(\re^{-(2\Delta r+\rho) L})).\notag
\end{align}
Here, 
$$
d_j =  a c_1 \overline{\tl c_j}  \la  e_{1,\infty}^{\rsu,\rl},  e_{j,\infty}^*\ra_\C,\quad \Delta\nu = \nu_\rss - \nu_\rsu,\quad\Delta r = \rre\,\Delta\nu,
$$
the Melnikov mapping $\mc{M}$ is defined in Hypothesis \ref{h:mel}, the constants $a,c_1,\tl c_j\in \C$ are defined in Hypothesis \ref{h:fa}, Lemma \ref{l:ex}, and Lemma \ref{l:exadj} respectively, and $\la\cdot,\cdot\ra_\C$ is the complexified inner product induced by the real inner product on $X$.  Moreover, for each $L$, the elements of the group orbit $\{T_1(\theta)U_\rtf(\xi,\mu_*(L)):\theta\in[0,2\pi)\}$ are also  pushed triggered fronts.

\end{Theorem}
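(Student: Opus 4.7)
The plan is to use a Lin--Sandstede style heteroclinic matching construction to reduce the existence of triggered pushed fronts to a finite-dimensional bifurcation equation, then expand that equation to leading order and invert via the implicit function theorem.

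First I would shift the two reference orbits so that they both pass near $U_*$ at the matching cross-section $\xi=0$: work with $q_1^0(\cdot + L)$ on $(-\infty, 0]$ and $q_2^0(\cdot - L)$ on $[0,+\infty)$, with $L\gg 1$ to be chosen. I look for a solution of \eqref{e:sys2} of the form $q_1^0(\xi+L) + w_1(\xi)$ on the left half line whose trace lies on $W^\rcu(U_p)$ at $-\infty$, and $q_2^0(\xi-L;\mu) + w_2(\xi)$ on the right half line whose trace lies on $W^\rs(\tl U_*)$ at $+\infty$. Using the exponential dichotomies from Proposition \ref{p:expdic} along each $q_i^0$, together with the invariant-manifold parameterizations from Propositions \ref{p:mfld}--\ref{p:mfld1}, the variational equations \eqref{e:var0} can be recast as fixed point equations in exponentially weighted norms (weight $\eta\in(r_\rsu,r_\rss)$), with boundary data parameterized by $\mu\in\R^2$ and by free coordinates $\beta_i$ in the leading subspaces $E_i^{\rss/\rsu,\rl}(0)$. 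A contraction argument, using the size of $q_1^0(L)$ and $q_2^0(-L;\mu)$ as small parameter, produces unique $w_i(\cdot;\mu,\beta_i,L)$ depending smoothly on all arguments.

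Second, I would impose the matching condition
\[
q_1^0(L) + w_1(0;\mu,\beta_1,L) \;=\; q_2^0(-L;\mu) + w_2(0;\mu,\beta_2,L),
\]
split into components along $T_{q_1^0(0)}W^\rcu(U_p) + T_{q_1^0(0)}W^\rss(U_*)$ (solvable for $\beta_1,\beta_2$ by construction) and along its two-dimensional orthogonal complement (Hypothesis \ref{h:tan}(i)). The latter yields the bifurcation equation
\[
\mc{M}\mu \;=\; J(L) + r(\mu,L),\qquad J(L) \in \mathrm{span}\{e_{j,0}^*\},
\]
where $\mc{M}$ is the Melnikov map from Hypothesis \ref{h:mel} and $r$ is higher order in the parameters. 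Computing the leading-order jump $J(L)$ is the main content: I would pair the known asymptotics $q_1^0(L) \sim a\,\re^{\nu_\rss L}e_{1,\infty}^{\rss,\rl} + \mathrm{c.c.}$ from Hypothesis \ref{h:fa} against the adjoint directions $e_j^*(\xi)$, then propagate through the neighborhood of $U_*$ using the inclination isomorphism $\mc{P}^\rl:E_2^{\rss,\rl}(0)\to E_2^{\rsu,\rl}(0)$ (Hypothesis \ref{h:inc}). The inclination produces an additional factor $\re^{(\nu_\rss - \nu_\rsu)L}$ from the tilt of $W^\rs(\tl U_*)$ toward $E_{1,\infty}^{\rss,\rl}$ in backward time, so that combined with the $\re^{\nu_\rss L}$ from $q_1^0$ and the decay rate $\re^{-\nu_\rsu L}$ of the projected adjoint mode, one obtains the announced exponent $2\Delta\nu L$. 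The normalization constants $a$, $c_1$, $\tl c_j$ coming from Hypothesis \ref{h:fa} and Lemmas \ref{l:ex}--\ref{l:exadj}, together with the complexified pairing $\langle e_{1,\infty}^{\rsu,\rl},e_{j,\infty}^*\rangle_\C$ enforced by the $S^1$-equivariance, combine to give the coefficients $d_j = a c_1 \overline{\tl c_j}\langle e_{1,\infty}^{\rsu,\rl}, e_{j,\infty}^*\rangle_\C$.

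Finally, invertibility of $\mc{M}$ (Hypothesis \ref{h:mel}) allows the implicit function theorem to solve the bifurcation equation as $\mu = \mu_*(L)$, giving the stated expansion with remainder $\mc{O}(\re^{-(2\Delta r+\rho)L})$ produced by the next-order terms in $q_1^0(L)$, the quadratic correction of $w_i(0)$, and the smoothness of the invariant manifolds. The orbit statement about $T_1(\theta)U_\rtf$ then follows for free from Hypothesis \ref{h:sym}, since the entire Lin construction is $S^1$-equivariant. The main obstacle I anticipate is Step three: tracking the precise leading order of $J(L)$. In particular the $\C$-linearity imposed by the equivariance (see Remark \ref{r:fsym}) must be used to rule out spurious real-linear terms in the pairing, and one must verify that the strong-stable/weak-stable splitting really supplies exactly two independent exponents $\re^{2\Delta\nu L}$ and its conjugate. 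All other steps—existence of $w_i$, Melnikov solvability, and the implicit function step—are standard given the dichotomy framework established in Section \ref{ss:invvar}.
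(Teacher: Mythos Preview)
Your approach is correct and captures the key mechanisms---the Melnikov map for the codimension-two obstruction, the inclination isomorphism $\mc{P}^\rl$ contributing the extra $\re^{\Delta\nu L}$, and the three exponential factors combining to $\re^{2\Delta\nu L}$---but the architecture differs from the paper's. You match once, at a single cross-section near $U_*$, with half-line solutions already constrained to $W^{\rcu}(U_p)$ and $W^{\rs}(\tl U_*)$. The paper instead follows Rademacher's scheme with three separate steps: Silnikov solutions on the finite intervals $[0,L]$ and $[-L,0]$ with free inner and outer boundary data (Propositions~\ref{p:fp}--\ref{p:fp2}), a gluing near $U_*$ (Proposition~\ref{p:gl}), and then separate matchings at the outer sections $\Sigma_2$ (transverse, Proposition~\ref{p:tr}) and $\Sigma_1$ (non-transverse, yielding the bifurcation equation~\eqref{e:mel4}). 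This separation buys two things: it isolates the map $\Phi_2^{\rsu}(-L,0)P_{2,+}^{\rsu}(0)\Phi_2^{\rss}(0,-L)$ carrying the inclination data as a stand-alone computation (Lemma~\ref{l:ex}), and it places the two-dimensional obstruction cleanly at $q_1^0(0)$ where the Melnikov integrals and the adjoint solutions $e_j^*(\xi)$ are naturally defined. In your compressed setup the obstruction you name, $(T_{q_1^0(0)}W^{\rcu}(U_p)+T_{q_1^0(0)}W^{\rss}(U_*))^\perp$, lives at the wrong base point---your match is at $q_1^0(L)$, and the incoming manifold on the right is $W^{\rs}(\tl U_*)$ rather than $W^{\rss}(U_*)$---so you would need to transport the adjoint directions $e_j^*$ forward along $q_1^0$ and reconstruct the Melnikov pairing there, which is precisely what the paper's multi-step decomposition avoids.
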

\begin{Remark}\label{r:ddt}
In a typical spatial dynamics formulation, temporal translations form the group orbit of each trigger front, $U_\rtf$.\end{Remark}

\section{Proof of Main Theorem}\label{s:pf}
\subsection{Variational set-up}\label{ss:var}
Our approach to proving Theorem \ref{t:m2} will follow that of Rademacher in \cite{Rademacher05}. There, a gluing-matching procedure akin to Lin's method \cite {lin90} was used to construct solutions near a heteroclinic cycle between a periodic orbit and an equilibrium. Our case is simpler as we glue near a fixed equilibrium, not a periodic orbit.

We wish to construct the desired solution, which connects $U_p$ to $\tl U_*$, by studying variational equations about the heteroclinic orbits corresponding to the preparation front and the pushed front.  For the former, since the heteroclinic $q_2(\xi;\mu)$ is robust in $\mu$, we study variations $w_2(\xi) =  U(\xi)- q_2(\xi;\mu)$ and define the system
\beq\label{e:var2}
\fr{d}{d\xi} w_2 = A_2(\xi) w_2 + g_2(\xi,w_2;\mu),\qquad \xi\in \R,
\eeq
with
$$
A_2(\xi) := D_UF(q_2(\xi;0)), \quad g_2(\xi,w_i) := F(q_2(\xi;\mu) + w_2; \mu) - F(q_2(\xi;\mu);\mu) - A_2(\xi) w_2.
$$ 
For the variations about the pushed front more care must be taken due to the fact that, under our hypotheses, $q_1^0(\xi)$ does not generically persist for all $\mu$ in a neighborhood of the origin.   To deal with this we select a trajectory, $q_1(\xi;\mu)$, defined for $\xi\geq0$, which is contained in the strong-stable manifold $W^\rss(U_*)$, and approaches $q_1^0$ uniformly as $\mu\rightarrow 0$.  This can be done by realizing that trajectories which are near $q_1^0$ and lie in the strong stable manifold are described using the following variation of constants formula
\begin{align}\label{e:sstrj}
v^\rss(\xi;\mu,v_0) &= \Phi^\rss_1(\xi,0)v_0 + \int_0^\xi \Phi_1^\rss(\xi,\zeta)  G_1(\zeta,v^\rss(\zeta);\mu) d\zeta + \int_\infty^\xi \Phi_1^\rsu(\xi,\zeta)G_1(\zeta,v^\rss(\zeta);\mu)d\zeta,\\
G_1(\xi,v;\mu)  &= F(q_1^0(\xi)+ v;\mu) - F(q_1^0(\xi);\mu) - D_UF(q_1^0(\xi);0)v,\notag
\end{align}
where $v_0\in E_1^\rss(0)$. 
In a similar manner we may also define for $\xi\leq 0$, 
\beq\label{e:cutrj}
 v^\rcu(\xi;\mu,v_0) = \Phi^\rcu_{-1}(\xi,0)v_0 + \int_0^\xi \Phi_{-1}^\rcu(\xi,\zeta)  G_1(\zeta,v^\rcu(\zeta);\mu) d\zeta + \int_{-\infty}^\xi \Phi_{-1}^\rs(\xi,\zeta)G_1(\zeta,v^\rcu(\zeta);\mu)d\zeta,
\eeq
where $v_0\in E_{-1}^\rcu(0)$, and $\Phi^{\rcu/\rs}_{-1}$ is the dichotomy associated with the periodic orbit $U_p$ along $q_1^0$ for $\xi\leq 0$.

It then follows for $\mu$ sufficiently small (see \cite[Lem 2.1]{homburg2010homoclinic}) that there exists vectors $v_0^\rss(\mu)\in E_1^\rss(0)$ and $v_0^\rcu(\mu)\in E_{-1}^\rcu(0)$, smooth in $\mu$,  such that $v_0^\rss(0) = v_0^\rsu(0) = 0$ and
\beq\label{e:qadj}
v^\rss(0;\mu,v_0^\rss) - v^\rcu(0;\mu,v_0^\rcu) \in \lp(T_{q_1^0(0)}W^\rss(U_*) + T_{q_1^0(0)}W^\rcu(U_*)\rp)^\perp. 
\eeq
Indeed, this can be obtained by using the Implicit Function theorem to solve the projected equation
\begin{align}
0 &= \mc{Q}\lp[ v^\rss(0;\mu,v_0^\rss) - v^\rcu(0;\mu,v_0^\rcu)  \rp],\notag\\
 &= v_0^\rss - v_0^\rcu +\mc{Q}\lp(\int_\infty^0 \Phi_1^\rsu(\xi,\zeta)G_1(\zeta,v^\rss(\zeta);\mu)d\zeta - \int_{-\infty}^0 \Phi_{-1}^\rs(\xi,\zeta)G_1(\zeta,v^\rcu(\zeta);\mu)d\zeta\rp),\notag
\end{align}
for $v_0^\rss$ and $v_0^\rcu$ in terms of $\mu$, where $\mc{Q}$ is the orthogonal projection of $X$ onto $\lp(T_{q_1^0(0)}W^\rss(U_*) + T_{q_1^0(0)}W^\rcu(U_*)\rp)$.

We shall denote such unique trajectories as 
\begin{align}
q_1(\xi;\mu)&:= v^\rss(\xi;\mu,v_0^\rss(\mu)),\quad \xi\geq0,\notag\\
q_1^-(\xi;\mu)&:= v^\rsu(\xi;\mu,v_0^\rsu(\mu)),\quad \xi\leq 0,\notag
\end{align}
so that $q_1$ approaches $U_*$ along the strong-stable manifold $W^\rss(U_*)$ as $\xi\rightarrow +\infty$  and satisfies $q_1(\xi;0) = q_1^0(\xi)$ for $\xi\geq0$, while $q_1^-$ approaches $U_p$ along the center-unstable manifold $W^\rcu(U_p)$ as $\xi\rightarrow-\infty$ and satisfies $q_1(\xi;0) = q_1^0(\xi)$ for $\xi\leq0$.

%
%
%

We can then define the variation $w_1(\xi) = U(\xi) - q_1(\xi;\mu)$ for $\xi\geq0$ and the variational equation
\beq\label{e:var1}
\fr{d}{d\xi} w_1 = A_1(\xi) w_1 + g_1(\xi,w_1;\mu),\qquad \xi\in \R^+,
\eeq
with
$$
A_1(\xi) := D_UF(q_1(\xi;0);0), \quad g_1(\xi,w_i) := F(q_1(\xi;\mu) + w_1; \mu) - F(q_1(\xi;\mu);\mu) - A_1(\xi) w_1. 
$$ 

%

Next, let $$\Sigma_i = \lp( \fr{dq_i^0}{d\xi}(0)\rp)^\perp,\quad i=1,2$$ be fixed transverse sections to $q_i$, with $\xi$ chosen so that each $\Sigma_i$ lies in a small neighborhood of $U_*$ and the orthogonal complement is taken in $X$. In order to construct the trigger front we wish to find solutions $w_1(\xi)$ and $w_2(\xi)$ of the variational equations in \eqref{e:var2} and \eqref{e:var1} which lie in $W^\rcu(U_p)$ and $W^\rs(\tl U_*)$ respectively and satisfy the following ``gluing" condition for some $L>0$:
\beq\label{e:glue}
w_2(-L) - w_1(L) = q_1(L;\mu) - q_2(-L;\mu).
\eeq

If these conditions hold then the corresponding solutions $U_i$ of \eqref{e:sys2} satisfy
$$
U_1(L) = U_2(-L),\qquad |U_1(-\xi) \rightarrow U_p(\xi)| + |U_2(\xi) - \tl U_*| \rightarrow0,\quad \xi \rightarrow \infty,
$$
so that the solution composed of the concatenation of $U_1$ and $U_2$ is the desired heteroclinic. 
Also, the smoothness of $F$ gives the following pointwise estimates on the variational nonlinearities
\begin{Lemma}\label{h:g}
There exists constants $C_i>0$ such that $g_i$ and its derivative $D_{w_i} g_i$ satisfy the following estimates for all $\xi$ and sufficiently small $w_i\in X$ and $\mu\in \R^2$,
\begin{align}
|g_i(\xi_i,w_i;\mu)| &\leq C \lp(|w_i|^2+ |\mu||w_i|\rp), \\
|D_{w_i} g_i(\xi_i,w_i;\mu)| &\leq C ( |w_i|+|\mu|).
\end{align}
\end{Lemma}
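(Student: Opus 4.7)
My plan is to split $g_i$ into a piece quadratic in $w_i$ and a piece linear in $w_i$ whose coefficient vanishes as $\mu\to 0$. Adding and subtracting $D_UF(q_i(\xi;\mu);\mu)w_i$ I would write
\begin{align}
g_i(\xi,w_i;\mu) &= \big[F(q_i(\xi;\mu)+w_i;\mu) - F(q_i(\xi;\mu);\mu) - D_UF(q_i(\xi;\mu);\mu)w_i\big] \notag\\
&\quad + \big[D_UF(q_i(\xi;\mu);\mu) - D_UF(q_i(\xi;0);0)\big]w_i.\notag
\end{align}
The first bracket is exactly the second-order Taylor remainder of $F(\,\cdot\,;\mu)$ at $q_i(\xi;\mu)$ applied to $w_i$, so smoothness of $F$ yields the pointwise bound $\leq C|w_i|^2$ uniformly in $\xi$ (and in $\mu$ near the origin) as long as $|w_i|$ stays bounded. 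The coefficient of $w_i$ in the second bracket I would control via joint continuity of $D_UF$, reducing the matter to the uniform-in-$\xi$ estimate
\[
\sup_{\xi\in J_i}|q_i(\xi;\mu)-q_i(\xi;0)|\leq C|\mu|.
\]

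For $i=2$ this bound is essentially built into Hypothesis \ref{h:fa}, which provides a $C^k$ family $q_2(\xi;\mu)$ with exponential decay to the equilibria $U_*$ and $\tl U_*(\mu)$, both of which depend smoothly on $\mu$; combined with the exponential rates this gives the uniform estimate. For $i=1$ the trajectory $q_1(\xi;\mu)$ is defined only for $\xi\geq 0$ via the variation-of-constants integral equation \eqref{e:sstrj} with starting vector $v_0^\rss(\mu)$ smooth in $\mu$ and vanishing at $\mu=0$; here I would run a standard contraction-mapping argument in the weighted norm $||\cdot||_\eta$ with $r_\rsu<\eta<r_\rss$. Using the strong-stable decay of $\Phi_1^\rss$, the complementary decay of $\Phi_1^\rsu$ on the backward half-line of integration, and the estimate $|v_0^\rss(\mu)|\leq C|\mu|$ from smoothness, the fixed point should satisfy $||q_1(\cdot;\mu)-q_1^0||_\eta\leq C|\mu|$, which in particular yields the required pointwise estimate on $\xi\geq 0$.

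The derivative bound follows from an analogous splitting,
\[
D_{w_i}g_i(\xi,w_i;\mu) = \big[D_UF(q_i(\xi;\mu)+w_i;\mu) - D_UF(q_i(\xi;\mu);\mu)\big] + \big[D_UF(q_i(\xi;\mu);\mu) - D_UF(q_i(\xi;0);0)\big],
\]
in which the first bracket is $O(|w_i|)$ by smoothness of $D_UF$ in its first argument and the second bracket is $O(|\mu|)$ by the same reasoning as above. The only mildly nontrivial step I anticipate is the uniform-in-$\xi$ control on $q_1(\xi;\mu)-q_1^0(\xi)$ coming from the weighted contraction argument; once that is in hand, both inequalities follow immediately from the smoothness of $F$ and the triangle inequality.
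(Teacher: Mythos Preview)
Your proposal is correct and is precisely the argument the paper has in mind; the paper's own proof is the single sentence ``This follows from the assumptions on $F$ and the heteroclinic solutions $q_i$ above,'' and your decomposition together with the uniform-in-$\xi$ control of $q_i(\xi;\mu)-q_i(\xi;0)$ is exactly what that sentence is suppressing. In particular your identification of the only nontrivial step---the weighted contraction estimate for $q_1(\cdot;\mu)-q_1^0$ coming from \eqref{e:sstrj} and the smoothness of $v_0^\rss(\mu)$---is on point, and for $i=2$ the smooth dependence is indeed part of Hypothesis~\ref{h:fa}.
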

\begin{Proof}
This follows from the assumptions on $F$ and the heteroclinic solutions $q_i$ above.
\end{Proof}

We construct solutions $w_i(\xi)$ to \eqref{e:var1} and \eqref{e:var2} separately, with each satisfying Silnikov boundary conditions for sufficiently large $L>0$:
\begin{align}
P_1^\rss(0)w_1(0) &= \mf{s}_1,\quad P_1^\rsu(L)w_1(L) = \mf{u}_1,\label{e:sil1} \\
P_2^\rss(-L)w_2(-L) &= \mf{s}_2,\quad P_2^\rsu(0)w_2(0) = \mf{u}_2,\label{e:sil2}
\end{align}
where $\mf{u}_i, \mf{s}_i\in X$ are free variables satisfying
\begin{align}
\mf{s}_1\in E^\rss_1(0), &\qquad \mf{u}_1\in E^\rsu_1(L),\label{e:bc1}\\
\mf{s}_2\in E^\rss_2(-L), &\qquad \mf{u}_2\in E^\rsu_2(0).\label{e:bc2}
\end{align}
Also, we require that $w_i(0)\in \Sigma_i$. To simplify notation, let $\mf{W}_i := (\mf{s}_i,\mf{u}_i)$ for $i = 1,2$.

With these solutions we follow the gluing-matching procedure used in \cite{Rademacher05}, which is outlined below and depicted in Figure \ref{f:gl}.
\begin{itemize}
\item \textbf{Section \ref{ss:sil} (Silnikov Solutions)}: Use variation of constants formulas to prove existence of variational solutions $w_i(\mf{W}_i;\mu,L)$, lying near $q_1$ and $q_2$, which lie in certain exponentially weighted function spaces and satisfy the boundary conditions \eqref{e:bc1}-\eqref{e:bc2}.
\item  \textbf{Section \ref{ss:gl} (Gluing)}: Use the gluing condition \eqref{e:glue} to solve for the ``outer" boundary variables $\mf{W}^0 := (\mf{s}_1,\mf{u}_2)$ in terms of the ``inner" boundary variables $\mf{W}^L:=(\mf{s}_2,\mf{u}_1), L$, and $\mu$.
\item  \textbf{Section \ref{ss:tm} (Transverse intersection)}: Match the solution $w_2(\mf{W}^0;\mu,L)$ with $W^\rs(\tl U_*(\mu))$ in the transverse section $\Sigma_2$ of $q_2(0)$.
\item   \textbf{Section \ref{ss:ntm} (Non-transverse intersection)}: Match the the solution $w_1(\mf{W}^0;\mu,L)$ with $W^\rcu(U_p)$ in the transverse section $\Sigma_1$ of $q_1(0)$ by first solving the matching condition in $E_1:=E_1^\rss(0) + E_{-1}^\rcu(0)$ where $E_{-1}^\rcu(0):=T_{q_1(0)}W^\rcu(U_p)$. Then solve the condition in the complement $E_1^\perp$ using Melnikov integrals.
\end{itemize}
In Section \ref{ss:lbf} we then derive asymptotics which allow us to obtain the bifurcation curve discussed in Theorem \ref{t:m2}.

\begin{figure}[h!] \begin{subfigure}[h]{0.9\textwidth}
\hspace{0.3in}
\includegraphics[width=\textwidth]{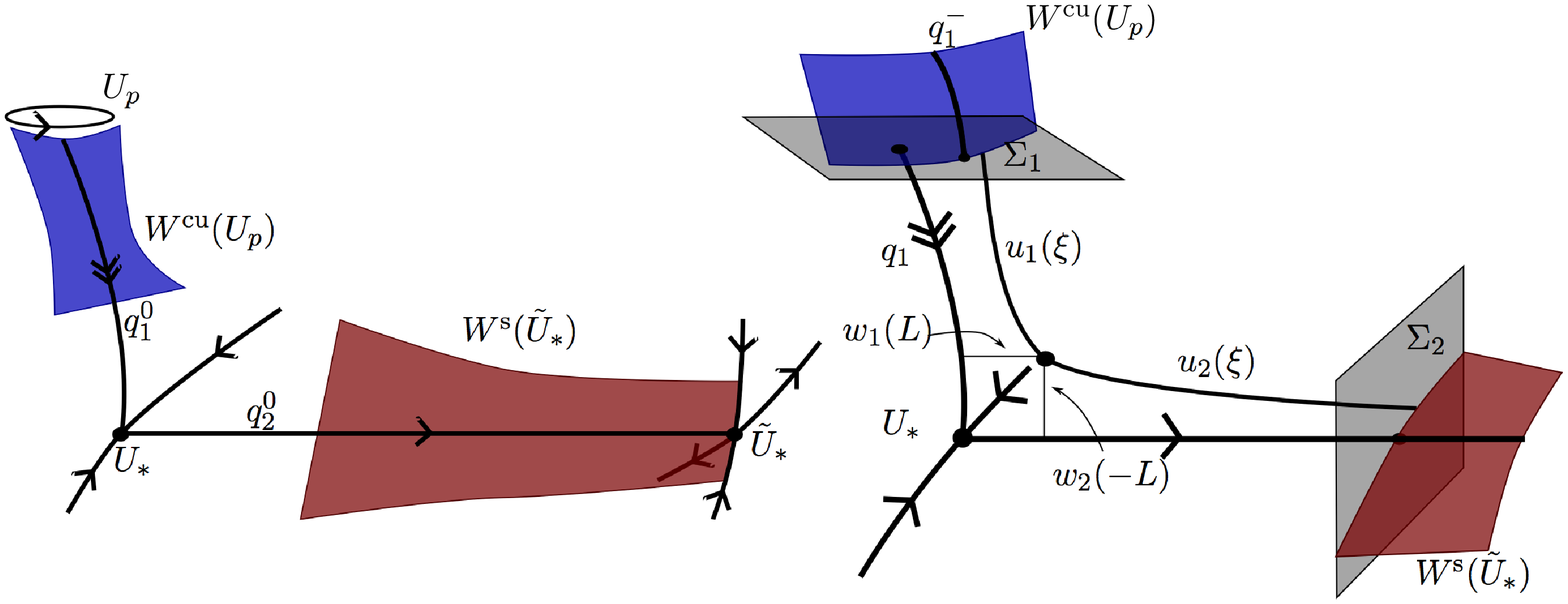}
\end{subfigure}
\hspace{-0.1in}
\begin{subfigure}[h]{0.9\textwidth}
\hspace{0.2in}
\includegraphics[width=\textwidth]{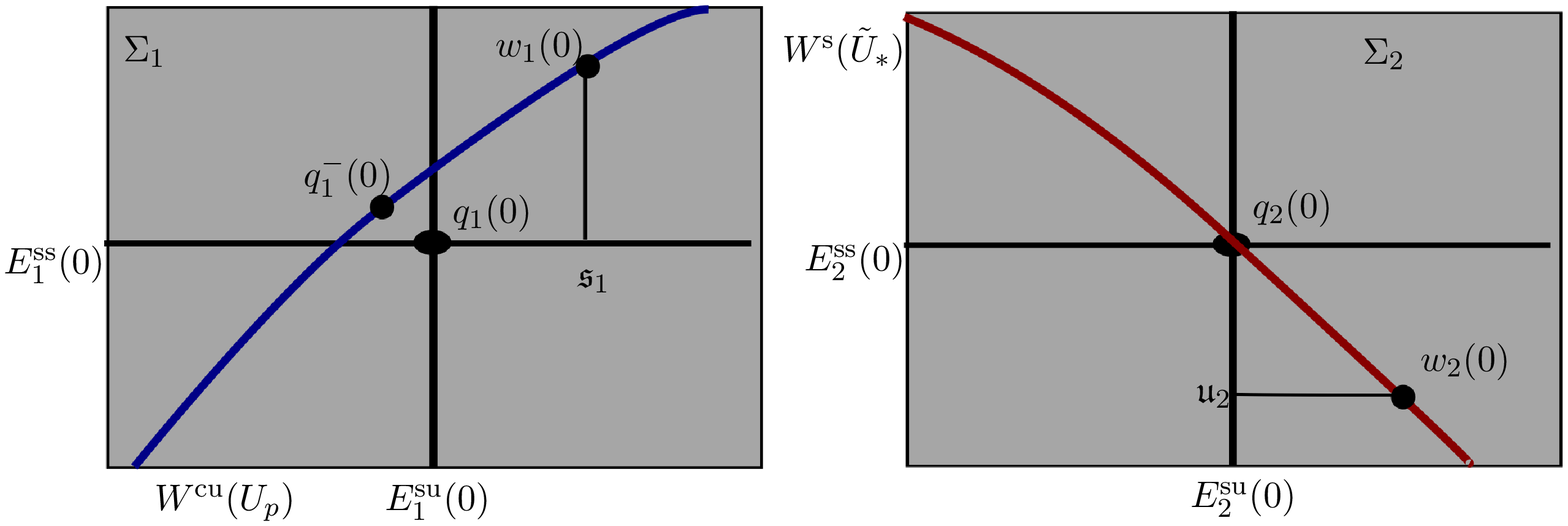}
\end{subfigure}
\hspace{-0.0in}

\caption{Schematic diagram of gluing construction. The top left figure depicts the global phase portrait in $X$ for $\mu = 0$, showing the two manifolds we wish to connect.  The top right figure depicts the gluing construction near the equilibrium $U_*$ for $\mu$ close to $0$, where initial data are taken in transverse sections $\Sigma_{1},\Sigma_{2}$.  These sections are depicted in the bottom figure with the corresponding Silnikov data $\mf{s}_1,\mf{u}_2$ prescribed in each.}
\label{f:gl}

\end{figure}

\subsection{Silnikov Solutions}\label{ss:sil}

In order to find solutions with the desired decay, we use exponentially weighted norms. Let $\eta_\rss, \eta_\rsu>0$ be fixed constants such that $\delta_\rss := r_\rss - \eta_\rss$ and $\delta_\rsu:= \eta_\rsu - r_\rsu  $ are positive and arbitrarily small.  Also define the quantity $m := \eta_\rss - 2\eta_\rsu$, which quantifies the size of the spectral gap $\Delta \eta:= \eta_\rss - \eta_\rsu$, so that $m<0$ if and only if $\eta_\rss/\eta_\rsu <2$.  

%

To begin we define the $L$-dependent norms
\begin{align}
||w_1||_{1,L} &:= \sup_{\xi\in I_1} \re^{\eta_\rss \xi + \gso L}|w_1^\rss(\xi)| +  \sup_{\xi\in I_1} \re^{\eta_\rsu \xi + \guo L}|w_1^\rsu(\xi)|\\
||w_2||_{2,L} &:= \sup_{\xi \in I_2} \re^{\eta_\rss(\xi+L) + \gst L}|w_2^\rss(\xi)|+\sup_{\xi \in I_2} \re^{\eta_\rsu(\xi+L)+ \gut L} |w_2^\rsu(\xi)|
\end{align}
where $I_1 = [0,L]$, $I_2 = [-L,0]$, $w_i^j(\xi) := P_i^j(\xi) w_i(\xi)$ for $i = 1,2$, $j = \rss,\rsu$ and 
$$
\gso =  \Delta\eta + |m| -\eta_\rsu,\quad \guo = \Delta\eta + |m| - \eta_\rsu , \quad \gst = \Delta\eta + |m|, \quad \gut = \Delta \eta + |m| + \rho,
$$
with $\rho>0$ arbitrarily small. Note by the definition of $m$, these quantities are all positive.  
\begin{Remark}\label{r:norm}
These norms were determined in order to make the upcoming fixed point operators uniform contractions in a sufficiently small neighborhood of the origin, and accommodate for all sizes of the spectral gap $\Delta \eta$ in the gluing-matching procedure.  We remark that more general conditions on $\gamma_{\rss/\rsu}$ and $\kappa_{\rss/\rsu}$ can be determined, but we have omitted them for the sake of presentation.

\end{Remark}

We denote 
\begin{align}
\Gamma_{1,L}^\epsilon &:= \{ w_1\in C^k([0,L],X)\, :\, ||w_1||_{1,L}< \epsilon\,\}, \notag\\
\Gamma_{2,L}^\epsilon &:= \{ w_2\in C^k([-L,0],X)\, :\, ||w_2||_{2,L}< \epsilon\notag\},
\end{align}
and define the variation-of-constants operators $$\Psi_i(w_i,\mf{W}_i;\mu, L):=\Psi_i^\rss(w_1,\mf{W}_1;\mu, L) + \Psi_i^\rsu(w_1,\mf{W}_1;\mu, L)$$ with
$$
\left(\begin{array}{c} \Psi_1^\rss(w_1;\mf{W}_1,\mu, L)(\xi)\\ \Psi_1^\rsu(w_1;\mf{W}_1,\mu, L)(\xi)\end{array}\right):= 
\left(\begin{array}{c} \Phi_1^\rss(\xi,0) \mf{s}_1 + \int_0^\xi \Phi_1^\rss(\xi,\zeta) g_1(\zeta,w_1(\zeta);\mu) d\zeta\\
 \Phi_1^\rsu(\xi,L) \mf{u}_1 + \int_L^\xi \Phi_1^\rsu(\xi,\zeta)g_1(\zeta,w_1(\zeta);\mu)d\zeta\end{array}\right),
$$
$$
\left(\begin{array}{c} \Psi_2^\rss(w_2;\mf{W}_2,\mu, L)(\xi)\\ \Psi_2^\rsu(w_2;\mf{W}_2,\mu, L)(\xi)\end{array}\right):= 
\left(\begin{array}{c} \Phi_2^\rss(\xi,-L) \mf{s}_2 + \int_{-L}^\xi \Phi_2^\rss(\xi,\zeta) g_2(\zeta,w_2(\zeta);\mu) d\zeta\\
 \Phi_2^\rsu(\xi,0) \mf{u}_2 + \int_0^\xi \Phi_2^\rsu(\xi,\zeta) g_2(\zeta,w_2(\zeta);\mu)d\zeta\end{array}\right).
$$

Finally, for some small $\delta>0$ we denote 
\begin{align}
\Lambda_{ L}^\delta &= \{\mu\in \R^2 \, : \, \re^{( 2\Delta\eta-\rho) L} |\mu|< \delta\},\quad 
X_{\eta,L}^\delta= \{ u\in X\,:\, \re^{\eta L} |u|<\delta\}.\label{e:XL}
\end{align}
Here, the weight in the parameter space $\Lambda_L^\delta$ was chosen to capture the $\mc{O}(\re^{2\Delta\nu L})$ leading-order dynamics of the bifurcation equation for $\mu$.
It is readily seen that if $w_i$ is a fixed point of $\Psi_i$ then it must also solve \eqref{e:var1} or \eqref{e:var2} for $i = 1,2$. We then have the following proposition,
\begin{Proposition}\label{p:fp}
There exists an $\epsilon_0>0$ such that the following holds.  For all $0<\epsilon<\epsilon_0$, and $L$ sufficiently large, there exists a $\delta>0$ such that for all $\mu \in \Lambda_{L}^\delta$ and $\mf{W}_1 = (\mf{s}_1,\mf{u}_1)$ which satisfy
\beq\label{e:bvp}
\mf{s}_1\in X_{\gso,L}^{\epsilon/6},\qquad \mf{u}_1\in X_{\eta_\rsu+\guo,L}^{\epsilon/6},
\eeq
the variational equation \eqref{e:var1} has a unique solution $w_1^*(\mf{W}_1,\mu,L)\in \Gamma_{1,L}^\epsilon$ which is $C^k$ in $(\xi,\mf{W}_1,\mu)$ and satisfies the boundary conditions \eqref{e:bc1}.

\end{Proposition}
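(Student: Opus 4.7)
The plan is a standard Silnikov-style fixed-point argument: show that $\Psi_1$ is a uniform contraction on the weighted ball $\Gamma_{1,L}^\epsilon$ with respect to $||\cdot||_{1,L}$, and invoke the Banach fixed-point theorem. By construction, a function $w_1\in C^k([0,L],X)$ solves \eqref{e:var1} together with the Silnikov boundary conditions \eqref{e:bc1} if and only if it is a fixed point of $\Psi_1(\cdot,\mf{W}_1;\mu,L)$; so establishing a unique fixed point in $\Gamma_{1,L}^\epsilon$ immediately yields the proposition.

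The first step is to check that $\Psi_1$ maps $\Gamma_{1,L}^\epsilon$ into itself. For the boundary-data contributions, Proposition \ref{p:expdic} gives $|\Phi_1^\rss(\xi,0)\mf{s}_1|\leq K\re^{-r_\rss\xi}|\mf{s}_1|$ and $|\Phi_1^\rsu(\xi,L)\mf{u}_1|\leq K\re^{-r_\rsu(\xi-L)}|\mf{u}_1|$, and multiplying by the respective weights $\re^{\eta_\rss\xi+\gso L}$ and $\re^{\eta_\rsu\xi+\guo L}$ and using the smallness assumption \eqref{e:bvp} reduces these to bounds of order $K\epsilon/6$, thanks to the positivity of $\delta_\rss=r_\rss-\eta_\rss$ and $\delta_\rsu=\eta_\rsu-r_\rsu$. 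For the convolution terms, I would insert the nonlinearity estimate $|g_1(\zeta,w_1;\mu)|\leq C(|w_1|^2+|\mu||w_1|)$ from Lemma \ref{h:g} and the pointwise bound $|w_1(\zeta)|\leq ||w_1||_{1,L}\lp(\re^{-\eta_\rss\zeta-\gso L}+\re^{-\eta_\rsu\zeta-\guo L}\rp)$ implied by membership in $\Gamma_{1,L}^\epsilon$, and carry out the resulting exponential integrals against the dichotomy rates $r_\rss,r_\rsu$. The output, once multiplied by the target weight, is of order $C\epsilon^2$ for the quadratic piece and $C\epsilon|\mu|\re^{(2\Delta\eta-\rho)L}\leq C\epsilon\delta$ for the $\mu$-piece by the definition of $\Lambda_L^\delta$; shrinking $\epsilon$ and $\delta$ (and enlarging $L$) yields the self-mapping property.

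The contraction property is obtained in the same way from $\Psi_1(w)-\Psi_1(\tl w)=\int\Phi_1^{\rss/\rsu}(\xi,\zeta)[g_1(\zeta,w;\mu)-g_1(\zeta,\tl w;\mu)]\,d\zeta$ and the Lipschitz estimate $|D_{w_1}g_1(\zeta,w;\mu)|\leq C(|w|+|\mu|)$, which after the same weight bookkeeping produces a Lipschitz constant of order $C(\epsilon+\delta)$, strictly less than one for $\epsilon,\delta$ small. Banach's theorem then delivers the unique $w_1^*(\mf{W}_1;\mu,L)\in\Gamma_{1,L}^\epsilon$. The boundary identities $P_1^\rss(0)w_1^*(0)=\mf{s}_1$ and $P_1^\rsu(L)w_1^*(L)=\mf{u}_1$ follow by evaluating the definition of $\Psi_1$ at the endpoints together with the projection relations in Proposition \ref{p:expdic}. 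Finally, $C^k$-smoothness of $w_1^*$ in $(\mf{W}_1,\mu)$ is a consequence of the parameter-dependent contraction mapping theorem and the $C^k$-smoothness of $F$ (hence of $g_1$), while smoothness in $\xi$ is automatic from the variation-of-constants representation.

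The main technical obstacle is the careful calibration of the weights $\gso$ and $\guo$, which involve the quantity $|m|=|\eta_\rss-2\eta_\rsu|$. When the quadratic term $|w_1(\zeta)|^2$ is convolved against $\Phi_1^\rss$, the resulting exponent depends sensitively on the sign of $r_\rss-2\eta_\rsu$: in one regime the convolution inherits the faster $2\eta_\rsu$ decay and must be re-dominated by the target rate $\eta_\rss$, while in the other it saturates at the dichotomy rate and must be dominated by $\eta_\rsu$. The absolute value built into $\gso,\guo$ via $|m|$ is precisely what makes the estimates close uniformly in $L$ across both regimes, and it is also what matches the parameter scale $\re^{(2\Delta\eta-\rho)L}$ encoded in $\Lambda_L^\delta$ with the eventual bifurcation scale $\re^{2\Delta\nu L}$ appearing in Theorem \ref{t:m2}. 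This bookkeeping is routine but tedious, and is where the bulk of the technical effort lies; no new ingredient beyond Proposition \ref{p:expdic} and Lemma \ref{h:g} is required.
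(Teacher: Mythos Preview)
Your proposal is correct and follows essentially the same approach as the paper: a uniform contraction argument for $\Psi_1$ on the weighted ball $\Gamma_{1,L}^\epsilon$, using the dichotomy bounds of Proposition~\ref{p:expdic} and the nonlinearity estimates of Lemma~\ref{h:g}, with the weight calibration via $|m|$ ensuring the exponential bookkeeping closes uniformly in $L$. The paper carries out the same steps explicitly (splitting into $\rss$/$\rsu$ components and treating the boundary, quadratic, and $\mu$-linear pieces separately), and likewise appeals to the uniform contraction principle for $C^k$-smoothness in the parameters.
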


\begin{proof}

We prove this result by showing that the operator, 
$$
\Psi_1(\mf{W}_1): \Gamma_{1,L}^\epsilon \times \Lambda_L^\delta\rightarrow \Gamma_{1,L}^\epsilon,
$$
which can readily be shown to be $C^k$ in both arguments, is well-defined and a uniform contraction.   The resulting unique fixed point will then be the desired solution.

We shall need the following pointwise estimates on the the nonlinearity $g_1$.   For any $\xi \in [0,L]$ with $w\in X$ and $\mu\in \R^2$ sufficiently small, Hypothesis \ref{h:fa} and Lemma \ref{h:g} give a constant $C_1>0$ such that
\begin{align}\label{e:pt2}
|g_1&(\xi,w;\mu)| \leq C_1( |w(\xi)|^2+|w(\xi)||\mu| )  \notag\\
&\leq C_1\Big( |w^\rss(\xi)|^2+ |w^\rsu(\xi)|^2+ |\mu|\left(|w^\rss(\xi)|+ |w^\rsu(\xi)|\right) \Big)\notag\\
&\leq C_1\Big( \re^{-2\eta_\rss \xi- 2\gso L} ||w^\rss||_{1,L}^2+\re^{-2\eta_\rsu \xi - 2\guo L} ||w^\rsu||_{1,L}^2 + |\mu|\left(\re^{-\eta_\rss\xi - \gso L}||w^\rss||_{1,L} + \re^{-\eta_\rsu \xi - \guo L} ||w^\rsu||_{1,L}\right)\Big).
\end{align}
Note that $w^j(\xi) = P_1^j(\xi) w(\xi)$.
Similarly, for $w,v\in X$ we have the quadratic estimate 
\begin{align}\label{e:pt1}
|g_1(\xi,w;\mu) - g_1(\xi,v;\mu)| &\leq C_1'\Big(|w(\xi)| + |v(\xi)| \cdot |w(\xi) - v(\xi)|+ |\mu||w(\xi) - v(\xi)|\Big)\notag\\
&\leq C_1'\re^{-2\eta_\rss\xi - 2\gso L} \lp(||w^\rss||_{1,L} + ||v^\rss||_{1,L} \rp)\cdot ||w^\rss - v^\rss||_{1,L} \notag\\
&\qquad\qquad\qquad\qquad + C_1'\re^{-2\eta_\rsu\xi - 2\guo L}\lp(||w^\rsu||_{1,L} + ||v^\rsu||_{1,L} \rp)\cdot ||w^\rsu - v^\rsu||_{1,L} \notag\\
&\qquad\qquad\qquad\qquad+ C_1'|\mu|\left(\re^{-\eta_\rss\xi - \gso L}||w^\rss - v^\rss||_{1,L} + \re^{-\eta_\rsu\xi - \guo L} ||w^\rsu - v^\rsu||_{1,L}\right).
\end{align}

We then find
\begin{align}\label{e:fp1}
\sup_{\xi\in I_1}&\,\,\re^{\eta_\rss \xi + \gso L}|\Psi_1^\rss(w_1)(\xi)| \leq 
\sup_{\xi\in I_1}\,\, \re^{(\eta_\rss - r_\rss)\xi + \gso L} |\mf{s}_1|+
\re^{\eta_\rss \xi + \gso L} \int_0^\xi \re^{-r_\rss(\xi - \zeta)} C_1\lp( |w_1(\zeta)|^2+|w_1(\zeta)||\mu|  \rp)d\zeta.
\end{align}
The condition on $\mf{s}_1$ gives
\beq\label{e:fp1a}
\sup_{\xi\in I_1}\,\, \re^{(\eta_\rss - r_\rss)\xi + \gso L} |\mf{s}_1| \leq \fr{\epsilon}{6}.
\eeq
Next we estimate the term involving $|w_1|^2$ in \eqref{e:fp1}:
\begin{align}\label{e:fp2}
\sup_{\xi\in I_1}&\,\,\re^{\eta_\rss \xi + \gso L} \int_0^\xi \re^{-r_\rss(\xi - \zeta)} C_1 |w_1(\zeta)|^2 d\zeta\notag\\
&\leq  C_1 \lp( \re^{-(\gso + \delta_\rss)L} - \re^{-(\gso +\eta_\rss)L} \rp)\fr{||w^\rss||_{1,L}^2}{2\eta_{\rss} - r_\rss}+ C_1 \lp( \re^{(\gso - 2 \guo + \eta_\rss - 2 \eta_\rsu)L} - \re^{(\gso - 2 \guo - \delta_\rss)L} \rp)\fr{||w^\rsu||_{1,L}^2}{|2\eta_\rsu - r_\rss|} \notag\\
  &\leq 2C_1 C_1^*||w_1||_{1,L}^2 <\epsilon/6 .
\end{align}
where $C_* = \max\{ \fr{1}{2\eta_\rss - r_\rss}, \fr{1}{|2\nu_\rsu - r_\rss|}\}$, and we require $\epsilon<\fr{1}{12C_1C_*}$.  Also note that we have used the fact that $\gso - 2 \guo = 2\eta_\rsu-\eta_\rss - |m| -2\rho$.

The term with $|w_1||\mu|$ in \eqref{e:fp1} can similarly be estimated by
\begin{align}\label{e:fp3}
C_1\sup_{\xi\in I_1}\,\,&\re^{\eta_\rss \xi + \gso L} \int_0^L \re^{-r_\rss(\xi - \zeta)}  |\mu| \left(\re^{-\eta_\rss\zeta - \gso L}||w^\rss ||_{1,L} + \re^{-\eta_\rsu\zeta - \guo L} ||w^\rsu||_{1,L}\right)\notag\\
&\leq C_1|\mu|\lp(  \re^{-\delta_\rss L}||w^\rss||_{1,L} + \re^{(\gso - \guo + \Delta\eta)L}||w^\rsu||_{1,L} \rp)\notag\\
&\leq C_1\delta||w||_{1,L}< \epsilon/6,
\end{align}
for any $0<\delta < \fr{1}{6C_1}$, since $\mu\in \Lambda_L^\delta$ and $\gso - \guo =-2\rho$. 

Combining \eqref{e:fp1a}, \eqref{e:fp2}, and \eqref{e:fp3} we obtain 
\beq
||\Psi_1^\rss(w_1)||_{1,L} <\epsilon/2.
\eeq

Similar estimates may be applied to obtain
\beq
||\Psi_1^\rsu(w_1)||_{1,L} < \epsilon/2,
\eeq
for any $\epsilon< (C_1 \max\{ \fr{1}{2\eta_\rss - r_\rsu}, \fr{1}{2\eta_\rsu - r_\rsu} \})^{-1}$. These can then be combined to obtain $||\Psi_1(w)||_{1,L}<\epsilon$.

To prove the contraction, the pointwise estimate \eqref{e:pt1} can be used in a similar way to obtain 
\begin{align}
||\Psi_1(w;\mf{W}_1,\mu,L) - \Psi_1(v;\mf{W}_1,\mu,L)||_{1,0} \leq C_1 C_* \lp(4\epsilon + |\mu|\re^{\Delta\eta L}\rp)||w - v||_{1,L}< \fr{1}{2}||w - v||_{1,L},\qquad w,v\in \Gamma_{1,L}^\epsilon,
\end{align}
For $\epsilon_0$ sufficiently small, and $L$ sufficiently large. Since $\Psi_1$ is smooth in $\mu$ and $\mf{W}_1$, the Uniform Contraction principle then gives the result.
\end{proof}

An analogous proof gives the existence of a solution for \eqref{e:var2}.
\begin{Proposition}\label{p:fp2}
There exists an $\epsilon_0>0$ such that the following holds:  For all $0<\epsilon<\epsilon_0$ and $L$ sufficiently large there exists a $\delta>0$ such that for all $\mu \in \Lambda_{L}^\delta$ and $\mf{W}_2 = (\mf{s}_2,\mf{u}_2)$ which satisfy
\beq\label{e:bvp2}
\mf{s}_2\in X^{\epsilon/6}_{\gst,L},\qquad \mf{u}_2\in X_{\eta_\rsu+\gut,L}^{\epsilon/6},
\eeq
equation \eqref{e:var2} has a unique solution $w_2^*(\mf{W}_2,\mu,L)\in \Gamma_{2,L}^\epsilon$ which is $C^k$ in $(\xi,\mf{W}_2,\mu)$.

\end{Proposition}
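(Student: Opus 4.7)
The plan is to mirror verbatim the fixed point / uniform-contraction argument used for Proposition \ref{p:fp}, with the Silnikov data now prescribed at $\xi=-L$ (strong-stable part) and $\xi=0$ (weakly unstable part), and the weighting rescaled by $\xi+L$ so that the crucial balance point lies at $\xi=-L$ instead of at $\xi=0$. Concretely, I would show that the map
\[
\Psi_2(\cdot,\mf{W}_2;\mu,L):\Gamma_{2,L}^\epsilon\times\Lambda_L^\delta\to\Gamma_{2,L}^\epsilon
\]
is well-defined, $C^k$, and a uniform contraction for $L$ large, $\epsilon,\delta$ small; the fixed point $w_2^*$ then solves \eqref{e:var2} and automatically verifies \eqref{e:sil2} by construction of $\Psi_2$.

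First I would establish pointwise bounds on $g_2$ analogous to \eqref{e:pt2}--\eqref{e:pt1}, now with the weight $\re^{\eta_\rss(\xi+L)+\gst L}$ on the strong-stable piece and $\re^{\eta_\rsu(\xi+L)+\gut L}$ on the weakly unstable piece. Using Lemma \ref{h:g} this gives, for $\xi\in[-L,0]$,
\[
|g_2(\xi,w;\mu)|\le C_2\Bigl(\re^{-2\eta_\rss(\xi+L)-2\gst L}\|w^\rss\|_{2,L}^2+\re^{-2\eta_\rsu(\xi+L)-2\gut L}\|w^\rsu\|_{2,L}^2\Bigr)+C_2|\mu|\bigl(\text{linear terms}\bigr),
\]
plus the corresponding Lipschitz estimate in $w$. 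The boundary contributions $\Phi_2^\rss(\xi,-L)\mf{s}_2$ and $\Phi_2^\rsu(\xi,0)\mf{u}_2$ produce factors $\re^{-r_\rss(\xi+L)}$ and $\re^{-r_\rsu\xi}$ respectively; after multiplying by the norm weights, the hypothesis $\mf{s}_2\in X_{\gst,L}^{\epsilon/6}$ and $\mf{u}_2\in X_{\eta_\rsu+\gut,L}^{\epsilon/6}$ bounds each by $\epsilon/6$.

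The main bookkeeping obstacle, exactly as in Proposition \ref{p:fp}, is verifying that each of the four quadratic cross terms produced by integrating $\Phi_2^{\rss/\rsu}(\xi,\zeta)$ against $g_2(\zeta,w;\mu)$ remains uniformly bounded in $L$. For instance, a term of type
\[
\re^{\eta_\rss(\xi+L)+\gst L}\int_{-L}^\xi \re^{-r_\rss(\xi-\zeta)}\re^{-2\eta_\rsu(\zeta+L)-2\gut L}\,d\zeta
\]
is controlled by $\re^{(\gst-2\gut+\eta_\rss-2\eta_\rsu)L}$ up to the integrability prefactor $(|2\eta_\rsu-r_\rss|)^{-1}$. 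The choices $\gst=\Delta\eta+|m|$ and $\gut=\Delta\eta+|m|+\rho$, with $m=\eta_\rss-2\eta_\rsu$ and $\rho>0$ small, are engineered precisely so that $\gst-2\gut+\eta_\rss-2\eta_\rsu\le -2\rho<0$, irrespective of the sign of $m$ (which is why the $|m|$ correction appears). The analogous checks for the remaining three combinations, together with the $\mu$-linear terms controlled by $\mu\in\Lambda_L^\delta$ through the factor $\re^{(2\Delta\eta-\rho)L}|\mu|<\delta$, yield $\|\Psi_2^\rss(w_2)\|_{2,L}+\|\Psi_2^\rsu(w_2)\|_{2,L}<\epsilon$.

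The contraction estimate follows from the Lipschitz version of the pointwise bound in the same fashion, producing a constant of the form $C_2C_*^{(2)}(4\epsilon+\delta)<\tfrac{1}{2}$ upon shrinking $\epsilon_0$ and $\delta$. Smoothness of $\Psi_2$ in $(\mf{W}_2,\mu)$ is inherited from smoothness of $F$ and of the dichotomies $\Phi_2^{\rss/\rsu}$, and the Uniform Contraction Principle then delivers the unique fixed point $w_2^*\in\Gamma_{2,L}^\epsilon$ which is $C^k$ jointly in $(\xi,\mf{W}_2,\mu)$. The only technical care beyond Proposition \ref{p:fp} is tracking the shift by $L$ in the weight and the reversal of roles of the two endpoints; no new ideas are required.
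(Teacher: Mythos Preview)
Your proposal is correct and follows exactly the approach the paper takes: the paper's own proof consists of the single sentence ``An analogous proof gives the existence of a solution for \eqref{e:var2},'' and your outline supplies precisely the analogous details, tracking the shifted weights $\xi+L$ and the exponents $\gst,\gut$ in place of $\gso,\guo$. No new ideas beyond those in Proposition~\ref{p:fp} are needed, and you have identified the relevant bookkeeping correctly.
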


\subsection{Gluing}\label{ss:gl}
We now wish to find boundary data $\mf{W}_1 = (\mf{s}_1,\mf{u}_1)$ and $\mf{W}_2 = (\mf{s}_2,\mf{u}_2)$ for which the solutions $w_1^*,w_2^*$ satisfy the gluing equation \eqref{e:glue}.  We use the projections $P_2^{\rss}(-L)$ and $P_1^{\rsu}(L)$ to decompose the gluing equation \eqref{e:glue} into the system
\begin{align}\label{e:gl}
P_2^{\rss}(-L)w_{1}^{*}(L) - \mf{s}_2 &= P_2^\rss(-L)\Delta q(L)\notag\\
\mf{u}_1 - P_1^\rsu(L)w_2^*(-L) &= P_1^\rsu(L)\Delta q(L)
\end{align}
where $\Delta q(L) = q_2(-L)- q_1(L)$.  

To simplify this system, we use the following estimates on the $\xi$-dependent projections.
\begin{Lemma}\label{l:proj}
For $L$ sufficiently large there exists a constant $K>0$ such that
\begin{align}
|P_1^j(L) u - P_{1,\infty}^ju|\leq K \re^{-\Delta \eta L}|u|,\\
|P_2^j(-L)u - P_{1,\infty}^j u|\leq K \re^{-\Delta\eta L}|u|,
\end{align}
for $j = \rss,\rsu$, $u\in X$.
\end{Lemma}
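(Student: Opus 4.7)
The plan is to compare the $\xi$-dependent dichotomy projections along each heteroclinic with the constant-coefficient projections at $U_*$, and to appeal to the standard roughness theory for exponential dichotomies. The two estimates are parallel, so I describe the $P_1^j(L)$ case; the $P_2^j(-L)$ estimate follows by a time reversal.

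Set $A_\infty := D_U F(U_*;0)$ and $B(\xi) := A_1(\xi) - A_\infty$. By Hypothesis \ref{h:fa}, $q_1^0(\xi)\to U_*$ at leading rate $r_\rss$, and smoothness of $F$ gives $|B(\xi)| \leq C\re^{-r_\rss \xi}$ for $\xi \geq 0$. For $L$ sufficiently large, I represent the invariant subspaces as graphs over the asymptotic subspaces,
\[
E_1^\rss(L) = \{v + h^\rss(L) v : v \in E_{1,\infty}^\rss\}, \qquad E_1^\rsu(L) = \{u + h^\rsu(L) u : u \in E_{1,\infty}^\rsu\},
\]
where $h^\rss(L):E_{1,\infty}^\rss \to E_{1,\infty}^\rsu$ and $h^\rsu(L):E_{1,\infty}^\rsu \to E_{1,\infty}^\rss$ are bounded linear maps. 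The invariance of these graphs under the linearized flow yields Riccati-type integral equations for $h^\rss$ and $h^\rsu$, and a contraction-mapping argument in an exponentially weighted function space, of the same type used to construct strong-stable fibers in \cite{Peterhof97} and \cite{Sandstede01}, produces the bound
\[
|h^\rss(L)|,\; |h^\rsu(L)| \leq C\re^{-(r_\rss - r_\rsu) L}.
\]
The key point is that the dichotomy kernel of the asymptotic constant-coefficient system contributes a factor $\re^{-(r_\rss - r_\rsu)(\zeta - L)}$ to the integrand, which together with $|B(\zeta)| \leq C\re^{-r_\rss \zeta}$ is integrable on $[L,\infty)$ because $r_\rss > r_\rss - r_\rsu$ (equivalent to $r_\rsu > 0$), and evaluates to a constant times $\re^{-(r_\rss - r_\rsu) L}$. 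The projection difference $P_1^j(L) - P_{1,\infty}^j$ is linearly controlled by $(h^\rss(L), h^\rsu(L))$ through a straightforward change-of-basis computation between the splittings $X = E_{1,\infty}^\rss \oplus E_{1,\infty}^\rsu$ and $X = E_1^\rss(L) \oplus E_1^\rsu(L)$, so inherits this bound. Since $\eta_\rss < r_\rss$ and $\eta_\rsu > r_\rsu$ force $\Delta\eta < r_\rss - r_\rsu$, the required estimate $\re^{-\Delta\eta L}$ follows for $L$ large.

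For $P_2^j(-L)$, the identical argument in backward time uses $|A_2(\xi) - A_\infty| \leq C\re^{r_\ru \xi}$ for $\xi \leq 0$; Hypothesis \ref{h:eig1}(i) ensures $r_\ru > 2r_\rss - r_\rsu > r_\rss - r_\rsu$, so the perturbation again decays faster than the gap, and the same $\re^{-(r_\rss - r_\rsu) L}$ bound results, which is stronger than the required $\re^{-\Delta\eta L}$. The main technical point to manage is carrying out the Riccati-style fixed-point argument in the infinite-dimensional Hilbert-space setting: one must work with the analytic semigroups generated by $A^\rss$ and $-A^\ru$ together with their interpolation scales rather than with $C^0$-propagators. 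The construction however follows the framework of \cite{Peterhof97} verbatim, and I would invoke it as a black box.
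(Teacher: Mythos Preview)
Your proposal is correct and takes the same route as the paper: both appeal to the roughness theory for exponential dichotomies in \cite{Peterhof97}, using the exponential convergence of $A_i(\xi)$ to $D_UF(U_*;0)$. The paper's own proof is in fact a one-line citation of \cite[Cor.~2]{Peterhof97}, whereas you have spelled out the graph-transform mechanism behind that corollary before invoking the same reference as a black box; your intermediate rate $e^{-(r_\rss-r_\rsu)L}$ is a slight undercount (the integral you display actually evaluates to order $e^{-r_\rss L}$), but either bound dominates the required $e^{-\Delta\eta L}$.
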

\begin{proof}
Using the asymptotic decay of $A_i(\xi)$ as $\xi\rightarrow \pm\infty$ for $i = 1,2$ respectively, this result follows from \cite[Cor. 2]{Peterhof97}.
\end{proof}

From this lemma, the heteroclinic asymptotics in Hypothesis \ref{h:fa} then give that \eqref{e:gl} has the leading order form
\begin{align}
\mf{s}_2 &= \lp[w_1^{*,\rss}(L)-  q_1(L)\rp] \lp(1+\mc{O}(\re^{-\Delta\eta L})\rp) \notag\\
\mf{u}_1 &= \lp[w_2^{*,\rsu}(-L)+ q_2(-L)\rp]\lp(1+\mc{O}(\re^{-\Delta\eta L})\rp). \notag
\end{align}

Hence, it suffices to prove the existence of solutions to the truncated system
\begin{align}
\mf{s}_2 &=w_1^{*,\rss}(L)-  q_1(L) \notag\\
\mf{u}_1 &=w_2^{*,\rsu}(-L)+ q_2(-L). \notag
\end{align}
Such solutions can be found as fixed points of the following operator,
\begin{align}\label{e:fpgl}
H_\mathrm{gl}&:\mf{X}_L^\epsilon\times \mf{X}_0^\epsilon \times \Lambda_L^\delta \rightarrow \mf{X}_L^\epsilon\notag\\
& (\mf{W}^L;\mf{W}^0,\mu)\mapsto 
\left(\begin{array}{c} \Psi_1^\rss(w_1^*(\mf{s}_1,\mf{u}_1, \mu,L),\mu,L)(L) \\
\Psi_2^\rsu(w_2^*(\mf{s}_2,\mf{u}_2, \mu,L);\mu, L)(-L)\end{array}\right) +
 \left(\begin{array}{c}-q_1(L;\mu)\\
q_2(L;\mu) \end{array}\right),
\end{align}
where we solve for the inner boundary values near the equilibrium $U_*$,
$$
\mf{W}^L:=(\mf{s}_2,\mf{u}_1)\in\mf{X}_L^\epsilon:=X_{\gst,L}^\epsilon\times X_{\eta_\rsu+\guo,L}^\epsilon,
$$
in terms of the outer boundary values near $q_1(0)$ and $q_2(0)$,
$$
\mf{W}^0:=(\mf{s}_1,\mf{u}_2)\in\mf{X}_0^\epsilon:=  X_{\gso ,L}^\epsilon\times X_{\eta_\rsu+\gut, L}^\epsilon.
$$
The exponential weights on these values are chosen to be consistent with the contraction arguments in Propositions \ref{p:fp} and \ref{p:fp2}. We then obtain the following existence result,

\begin{Proposition}\label{p:gl}
There exists an $\epsilon_1>0$ such that the following holds.
For all $\epsilon< \epsilon_1$, $L$ sufficiently large, there exists a $\delta>0$ such that the mapping $H_\mathrm{gl}:\mf{X}_L^\epsilon\times \mf{X}_0^\epsilon \times \Lambda_L^\delta \rightarrow \mf{X}_L^\epsilon$ has a unique fixed point $ \mf{W}^L_\dagger(\mf{W}^0,\mu,L) := \lp(\mf{s}_2^\dagger(\mf{W}^0,\mu,L),\mf{u}_1^\dagger(\mf{W}^0,\mu,L)\rp)$ which is $C^k$ smooth in all its variables.
\end{Proposition}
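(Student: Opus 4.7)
The plan is to apply the Uniform Contraction Principle to $H_\mathrm{gl}$, viewing it as a mapping in $\mf{W}^L \in \mf{X}_L^\epsilon$ parameterized by $(\mf{W}^0, \mu, L) \in \mf{X}_0^\epsilon \times \Lambda_L^\delta \times [L_*, \infty)$. The $C^k$-smoothness of $H_\mathrm{gl}$ in all variables follows from the corresponding smoothness of the Silnikov solutions $w_i^*$ established in Propositions \ref{p:fp} and \ref{p:fp2}, of the trajectories $q_i(\xi;\mu)$, and of the exponential dichotomies, so that once a unique fixed point is obtained, its $C^k$-smoothness will be automatic.

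The self-mapping property requires bounding each component of $H_\mathrm{gl}(\mf{W}^L)$ in the appropriate weighted norm. For the first component I would estimate $\re^{\gst L}|\Psi_1^\rss(w_1^*)(L) - q_1(L;\mu)|$: the term $q_1(L;\mu)$ decays like $\re^{-r_\rss L}$ by Hypothesis \ref{h:fa}, so the weight $\re^{\gst L}$ leaves a factor $\re^{-(r_\rss-\gst)L}$, which is small for $L$ large once $\eta_\rss, \eta_\rsu$ are chosen close enough to $r_\rss, r_\rsu$; the Silnikov contribution $\Psi_1^\rss(w_1^*)(L)$ is controlled using the estimate $||w_1^*||_{1,L}<\epsilon$ and the bound on $\Phi_1^\rss(L,\zeta)$ from Proposition \ref{p:expdic}, yielding a prefactor of $\re^{-(\eta_\rss+\gso)L}\epsilon$ which remains small after multiplying by $\re^{\gst L}$ thanks to the sign of $\eta_\rss + \gso - \gst$. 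An analogous computation bounds the second component, using the decay $q_2(-L;\mu) = \mc{O}(\re^{-r_\ru L})$ from Hypothesis \ref{h:fa} together with the spectral gap $r_\ru > 2r_\rss - r_\rsu$ in Hypothesis \ref{h:eig1}(i) to absorb the weight $\re^{(\eta_\rsu+\gut)L}$.

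The contraction estimate exploits the off-diagonal structure of $H_\mathrm{gl}$: in the variables $\mf{W}^L = (\mf{s}_2,\mf{u}_1)$, the first component depends only on $\mf{u}_1$ (through $w_1^*(\mf{s}_1,\mf{u}_1,\mu,L)$) and the second only on $\mf{s}_2$ (through $w_2^*(\mf{s}_2,\mf{u}_2,\mu,L)$), so the cross-coupling goes through one iterate of $H_\mathrm{gl}$. The Lipschitz dependence of $w_i^*$ on its Silnikov data, inherited from the contraction arguments in Propositions \ref{p:fp} and \ref{p:fp2}, combined with the exponential decay of $\Phi_i^{\rss/\rsu}$ evaluated at the endpoints, yields a Lipschitz constant bounded by a constant multiple of $\re^{-\Delta\eta L} + |\mu|\re^{(\Delta\eta - \rho) L}$, which is made arbitrarily small by first taking $L$ large and then $\delta$ small so that $\mu \in \Lambda_L^\delta$ forces the second term to stay bounded. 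Applying the Uniform Contraction Principle then yields the unique $C^k$ fixed point $\mf{W}^L_\dagger(\mf{W}^0,\mu,L)$.

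The main obstacle will be verifying that the carefully tuned exponential weights $\gso, \guo, \gst, \gut$ introduced in Section \ref{ss:sil} are in fact simultaneously compatible with the self-mapping and contraction estimates for the full range of the parameter $m = \eta_\rss - 2\eta_\rsu$ governing the spectral gap; this amounts to a bookkeeping check that each of the exponents appearing after applying the dichotomy estimates to $\Psi_i$ and the asymptotic decay of $q_i$ is positive. The structural point — that the $\mf{W}^L$-dependence and the $\mf{W}^0$-dependence decouple cleanly in $H_\mathrm{gl}$ because $w_i^*$ is indexed only by $\mf{W}_i$ — is what allows the weights introduced earlier to propagate through the gluing step without loss.
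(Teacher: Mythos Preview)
Your proposal is correct and follows essentially the same route as the paper: both apply the Uniform Contraction Principle to $H_\mathrm{gl}$ by combining the Lipschitz bounds \eqref{e:gl3} on the Silnikov solutions $w_i^*$, the dichotomy decay from Proposition~\ref{p:expdic}, the asymptotics of $q_1(L;\mu)$ and $q_2(-L;\mu)$ from Hypothesis~\ref{h:fa}, and the eigenvalue condition $r_\ru>2r_\rss-r_\rsu$ from Hypothesis~\ref{h:eig1}(i). One small slip: the target space for the second component of $H_\mathrm{gl}$ is $X_{\eta_\rsu+\guo,L}^\epsilon$ (the weight on $\mf u_1$), not $X_{\eta_\rsu+\gut,L}^\epsilon$; since $\eta_\rsu+\guo=\Delta\eta+|m|$ this does not change your exponent bookkeeping, and the condition $r_\ru>\Delta\eta+|m|$ you need is indeed implied by Hypothesis~\ref{h:eig1}(i) once $\delta_\rss,\delta_\rsu$ are small.
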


\begin{proof}
First note that since $w_i^*$ is smooth in $\mf{W}_i$ and $\mu$, and $w_i^*(0;0,L)= 0$, we have that 
\begin{align}\label{e:gl3}
||w_1^*(\mf{W}_1;\mu,L)||_{1,L}&\leq C_3 \lp( \re^{\gso L}|\mf{s}_1|+ \re^{(\eta_\rsu+\guo)L}|\mf{u}_1| +\re^{\Delta \eta L}|\mu|\rp),\notag\\
||w_2^*(\mf{W}_2;\mu,L)||_{2,L}&\leq C_3 \lp( \re^{\gst L}|\mf{s}_2|+ \re^{(\eta_\rsu+\gut)L}|\mf{u}_2| +\re^{\Delta \eta L}|\mu|\rp).
\end{align}

Similar to the proofs in the previous section, the pointwise estimates \eqref{e:pt2} and \eqref{e:pt1} then give the estimates
\begin{align}
\re^{\gst L} |\Phi_1^\rss(L,0) \mf{s}_1| &\leq K_1 \re^{(\gst - r_\rss)L}|\mf{s}_1|,\notag\\
\re^{\gst L} |\int_0^L \Phi_1^\rss(L,\zeta)& g_1(\zeta,w_1^*;\mu) d\zeta|\leq
 C_2 \re^{\gst L}\lp(  \re^{-2(\eta_\rss + \gso)L} ||w_{1}^\rss||^2_{1,L} + \re^{-2(\eta_\rsu + \guo)L} ||w_1^\rsu||_{1,L}^2  \rp)\notag\\
 &\qquad\qquad\qquad\qquad\qquad\qquad\qquad+ |\mu|\lp( \re^{-(\eta_\rss+\gso) L}||w_1^\rss||_{1,L}  + \re^{-(\eta_\rsu + \guo)L} ||w_1^\rsu||_{1,L}\rp), \notag
\end{align}
for the first component of $H_\mathrm{gl}$. 

For the second component we obtain for some constants $K_2, K_2'>0$,
\begin{align}
\re^{(\eta_\rsu+\guo) L} |\Phi_2^\rsu(-L, 0) \mf{u}_2| &\leq K_2 \re^{(2\eta_\rsu + \guo)L} |\mf{u}_2|\leq K_2\re^{(\eta_\rsu + \guo - \gut)L} \epsilon , \label{e:est11}\\
\re^{(\eta_\rsu+\guo) L} |\int_0^{-L} \Phi_2^\rsu(-L,\zeta) g_2(\zeta,w_2^*;\mu) d\zeta| &\leq
K_2\re^{(2\eta_\rsu+\guo - \delta_\rsu) L} \Bigg(  \re^{-2(\eta_\rss + \gst)L}||w_2^\rss||_{2,L}^2 + \re^{-2(\eta_\rsu+\gut)L} ||w_2^\rsu||_{2,L}^2   \notag\\ 
&\quad+ |\mu| \lp (  \re^{-(\eta_\rss + \gst)L}||w_2^\rss||_{2,L} + \re^{-(\eta_\rsu + \gut)L}||w_2^\rsu||_{2,L}  \rp) \Bigg)\notag\\
&\leq K_2\Bigg(  \re^{ (-3\Delta\eta - |m| - \eta_\rsu )L}||w_2^\rss||_{2,L}^2 + \re^{(-\eta_\rsu - \Delta \eta - |m|)L} ||w_2^\rsu||_{2,L}^2   \notag\\ 
&\quad+ |\mu| \lp (  \re^{ -\Delta\eta L}||w_2^\rss||_{2,L} + \re^{-(\delta_\rsu + \rho)L}||w_2^\rsu||_{2,L}  \rp) \Bigg),\notag\\
\re^{\eta_\rsu + \guo L} | q_2(L)| &\leq K_2'\re^{(\Delta\eta + |m| - r_\ru) L}. \notag
\end{align}
Pairing these estimates with those in \eqref{e:gl3} and using Hypothesis \ref{h:eig1}, we can then obtain the desired invariance of $H_{\mathrm{gl}}$ for sufficiently small $\epsilon$.

Uniform Contraction then follows in a similar manner, using the fact that $||w_i^*(\mf{W}_i) - w_i^*(\mf{V}_i)||_{i,L} \leq C_2 |\mf{W}_i - \mf{V}_i|$ for small enough $\mf{W}_i,\mf{V}_i$. 
\end{proof}

This proposition implies the existence of boundary data
$$
 \mf{W}^L_\mathrm{gl}(\mf{W}^0,\mu,L) := \lp(\mf{s}_2^\mathrm{gl}(\mf{W}^0,\mu,L),\mf{u}_1^\mathrm{gl}(\mf{W}^0,\mu,L)\rp),
$$ smooth in all dependent variables, which give glued solutions  
$$
w_1^\mathrm{gl}(\mf{W}^0,\mu,L)(\xi) := w_1^*(\mf{s}_1,\mf{u}_1^\mathrm{gl}(\mf{W}^0), \mu,L)(\xi),\quad 
w_2^\mathrm{gl}(\mf{W}^0,\mu,L)(\xi) := w_2^*(\mf{s}_2^\mathrm{gl}(\mf{W}^0),\mf{u}_2, \mu,L)(\xi),
$$
satisfying \eqref{e:gl}.

\subsection{Transverse matching}\label{ss:tm}

Now we match the glued solution with the stable manifold $W^\rs(\tl U_*)$ inside $\Sigma_2$. Since this manifold is $C^k$-smooth and intersects $W^\rsu(U_*)$ transversely, we have that $W^\rs(\tl U_*)\cap \Sigma_2$ can be locally described near $q_2(0)$ as a graph $h_2: E_{+2}^\rs(0)\cap\Sigma_2\rightarrow E_2^\rsu(0)\cap \Sigma_2,$ where $E_{+2}^\rs(0):=P_{+2}^\rs(0)X$ and 
$$
|h_2(v_2;\mu)|\leq K_\rs |v_2|(|\mu|+|v_2|),
$$ 
for some $K_\rs>0$, and sufficiently small $\mu$ and $v_2\in E_{+2}^\rs(0)\cap\Sigma_2$.   Here, $P_{+2}^\rs(\xi)$ is the projection associated with the dichotomy used to construct the invariant manifold $W^\rs(\tl U_*)$. 
We thus obtain the matching equation
\beq\label{e:tr}
w_2^\mathrm{gl}(\mf{W}^0;\mu,L)(0) = v_2+ h_\rs(v_2;\mu), 
\eeq
which we use to solve for $(\mf{u}_2, v_2)$ in terms of $(\mf{s}_1,\mu)$.
Defining 
$$
\tl g_i(\xi,\mf{W}^0;\mu) := g_i(\xi, w_i^\mathrm{gl}(\mf{W}^0;\mu,L)(\xi);\mu),\quad i = 1,2,
$$ we use the projected solution operators $\Psi_2^{\rss/\rsu}$ to write \eqref{e:tr} as
\beq\label{e:tr1}
\mf{u}_2 - v_2   = 
h_\rs( v_2 ,\mu) -\Phi_2^\rss(0,-L)\mf{s}_2^\mathrm{gl}(\mf{W}^0) -\int_{-L}^0 \Phi_2^\rss(0,\zeta)\tl g_2(\zeta;\mf{W}^0;\mu) d\zeta.
\eeq

\begin{Proposition}\label{p:tr}
For some $\epsilon_2>0$ the following holds:
For all $0<\epsilon<\epsilon_2$ there exists a $\delta>0$ such that \eqref{e:tr} has a $C^k$-solution $(\mf{u}_2^\mathrm{tr},v_2^\mathrm{tr})(\mf{s}_1,\mu)\in X_{\eta_\rsu+\gut,L}^\epsilon\times X_{\eta_\rsu+\gut,L}^\epsilon$, for each $(\mf{s}_1,\mu)\in X_{\gso,L}^{\epsilon/8}\times \Lambda_{L}^{\delta}$. 
\end{Proposition}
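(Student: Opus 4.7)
The plan is to reduce the matching equation \eqref{e:tr1} to a fixed-point problem on the product space $X_{\eta_\rsu+\gut,L}^\epsilon\times X_{\eta_\rsu+\gut,L}^\epsilon$, with $(\mf{s}_1,\mu)$ playing the role of parameters. By Hypothesis \ref{h:tan}(ii), the tangent spaces $E_2^\rsu(0)=T_{q_2^0(0)}W^\rsu(U_*)$ and $T_{q_2^0(0)}W^\rs(\tl U_*)$ sum to $X$ and intersect only along $\mathrm{span}\{\fr{dq_2^0}{d\xi}(0)\}$; intersecting with the transverse section $\Sigma_2$ yields a topological direct-sum decomposition
\beq
\Sigma_2 = \lp(E_2^\rsu(0)\cap\Sigma_2\rp)\oplus \lp(E_{+2}^\rs(0)\cap\Sigma_2\rp),
\eeq
with associated bounded projections $\mc{Q}^\rsu$ and $\mc{Q}^\rs$. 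Applying these projections to \eqref{e:tr1} produces the equivalent system
\begin{align}
\mf{u}_2 &= \phantom{-}\mc{Q}^\rsu\lp[h_\rs(v_2,\mu) - \Phi_2^\rss(0,-L)\mf{s}_2^\mathrm{gl}(\mf{W}^0) - \int_{-L}^0 \Phi_2^\rss(0,\zeta)\tl g_2(\zeta;\mf{W}^0;\mu)d\zeta\rp],\notag\\
v_2 &= -\mc{Q}^\rs\lp[h_\rs(v_2,\mu) - \Phi_2^\rss(0,-L)\mf{s}_2^\mathrm{gl}(\mf{W}^0) - \int_{-L}^0 \Phi_2^\rss(0,\zeta)\tl g_2(\zeta;\mf{W}^0;\mu)d\zeta\rp],\notag
\end{align}
which I write abstractly as $(\mf{u}_2,v_2) = H_\mathrm{tr}(\mf{u}_2,v_2;\mf{s}_1,\mu)$.

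Next I would show invariance of the ball $X_{\eta_\rsu+\gut,L}^\epsilon\times X_{\eta_\rsu+\gut,L}^\epsilon$ under $H_\mathrm{tr}$ by estimating each term on the right after multiplication by the weight $\re^{(\eta_\rsu+\gut)L}$. The graph bound $|h_\rs(v_2,\mu)|\leq K_\rs|v_2|(|\mu|+|v_2|)$ is quadratic in $v_2$ and so costs one extra factor of $\epsilon\re^{-(\eta_\rsu+\gut)L}$ or $\delta \re^{-(2\Delta\eta-\rho)L}$; the dichotomy term obeys $\re^{(\eta_\rsu+\gut)L}|\Phi_2^\rss(0,-L)\mf{s}_2^\mathrm{gl}|\leq K\epsilon \re^{-(r_\rss-\eta_\rsu-\rho)L}$, which is small because $\gut-\gst=\rho$ can be chosen smaller than $\delta_\rss=r_\rss-\eta_\rss$; and the nonlinear integral term is controlled using the pointwise bound of Lemma \ref{h:g} together with the a priori estimate $\|w_2^\mathrm{gl}\|_{2,L}\leq C\epsilon$ from Propositions \ref{p:fp2} and \ref{p:gl}, giving the same $\rmO(\epsilon^2+\epsilon\delta)$ control as in the gluing step.

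For the contraction property I would exploit three facts: $h_\rs$ has Lipschitz constant $\rmO(|v_2|+|\mu|)$, $\mf{s}_2^\mathrm{gl}$ depends Lipschitz-continuously on $\mf{W}^0=(\mf{s}_1,\mf{u}_2)$ by Proposition \ref{p:gl}, and $\tl g_2$ inherits a Lipschitz bound from Lemma \ref{h:g} composed with the Lipschitz dependence of $w_2^\mathrm{gl}$ on the Silnikov data. Collecting the weighted bounds, the Lipschitz constant of $H_\mathrm{tr}$ in $(\mf{u}_2,v_2)$ is of order $\epsilon+|\mu|\re^{\Delta\eta L}\leq C(\epsilon+\delta)$, strictly less than $\tfrac{1}{2}$ for $\epsilon,\delta$ sufficiently small. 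The $C^k$-dependence of $H_\mathrm{tr}$ on $(\mf{s}_1,\mu)$ is inherited from the $C^k$-smoothness of $F$, $h_\rs$, and the Silnikov and gluing families, so the Uniform Contraction Principle produces the unique smooth fixed point $(\mf{u}_2^\mathrm{tr},v_2^\mathrm{tr})(\mf{s}_1,\mu)$.

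The main obstacle is not conceptual but bookkeeping: one must check that the weight choices $\gso,\guo,\gst,\gut$ fixed in Section \ref{ss:sil} really do make every term on the right of $H_\mathrm{tr}$ exponentially smaller than $\epsilon\re^{-(\eta_\rsu+\gut)L}$. The crucial inequalities are $r_\rss-\eta_\rsu>\rho$ (used for the dichotomy term) and $\gut+\eta_\rsu\leq 2(\gst+\eta_\rss)$ in the presence of the gap $\Delta\eta$ (used for the quadratic integral term); both were built into the definitions $\gst=\Delta\eta+|m|$ and $\gut=\Delta\eta+|m|+\rho$ with $\rho$ arbitrarily small, so verifying them reduces to the same type of weighted estimates carried out in Propositions \ref{p:fp}, \ref{p:fp2} and \ref{p:gl}.
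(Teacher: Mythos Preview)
Your proposal is correct and follows essentially the same route as the paper: the paper phrases the decomposition as inverting the canonical map $S_2:(\mf{u}_2,v_2)\mapsto \mf{u}_2-v_2$ on $\Sigma_2$, whose inverse is precisely $S_2^{-1}(x)=\bigl(P_{2,+}^\rsu(0)x,\,-P_{+2}^\rs(0)x\bigr)$, i.e.\ your projections $\mc{Q}^\rsu,\mc{Q}^\rs$, and then derives the same weighted bounds \eqref{e:est22}--\eqref{e:trfp} before invoking the Uniform Contraction Principle. The only cosmetic difference is that the paper exploits $h_\rs(v_2;\mu)\in E_2^\rsu(0)$ to simplify the $\mf{u}_2$-component of the fixed-point map to $\mf{u}_2=h_\rs(v_2;\mu)$, whereas you retain the full projected expression; both lead to the same contraction estimate.
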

\begin{proof}
First, Hypothesis \ref{h:tan} implies that the canonical mapping
\begin{align}
S_2:& (E_2^\rsu(0)\cap \Sigma_2) \times  (E_{+2}^\rs(0)\cap\Sigma_2) \rightarrow \Sigma_2\notag\\
&(\mf{u}_2, v_2)\mapsto \mf{u}_2 - v_2 ,\notag
\end{align} is invertible with uniformly bounded inverse.  Thus \eqref{e:tr} can be rewritten as the fixed point problem
\begin{align}
(\mf{u}_2,v_2) &= S_2^{-1}\Big(h_\rs(v_2,\mu) -\Phi_2^\rss(0,-L)\mf{s}_2^\mathrm{gl}(\mf{W}^0) -\int_{-L}^0 \Phi_2^\rss(0,\zeta)\tl g_2(\zeta,\mf{W}^0;\mu) d\zeta\Big)\notag\\
&= \lp(  h_\rs(v_2;\mu)\,,\, -P_{+2}^\rs(0)\lp( \Phi_2^\rss(0,-L)\mf{s}_2^\mathrm{gl}(\mf{W}^0)+\int_{-L}^0 \Phi_2^\rss(0,\zeta)\tl g_2(\zeta,\mf{W}^0;\mu) d\zeta  \rp)  \rp)\label{e:trfp1}.
\end{align}
since $S^{-1}_2(x) = (P_{2,+}^\rsu(0) x, -P_{+2}^\rs(0)x)$, where $P_{2,+}^\rsu(\xi)$ was defined in Hypothesis \ref{h:inc} above.

We then obtain the following estimates on the different terms of the right-hand side of the above equation for some constant $K>0$,
\begin{align}
\re^{(\eta_\rsu+\gut) L} \Big|h_\rs(v_2 ,\mu) \Big| &\leq K\re^{(\eta_\rsu+\gut) L} \lp(|v_2||\mu| + |v_2|^2 \rp) \notag\\
&\leq K\epsilon(\delta + \epsilon),\label{e:est22}\\
\re^{(\eta_\rsu+\gut)L} |P_{+2}^\rs(0) w_2^\rss(0)|&\leq K \re^{(\eta_\rsu + \gut - \eta_\rss - \gst)L} ||w_2^\rss||_{2,L} =  K\re^{(\rho-\Delta \eta) L}   ||w_2^\rss||_{2,L}.\label{e:trfp}
\end{align}
Applying the estimates in \eqref{e:gl3} we conclude that \eqref{e:trfp1} is a uniform contraction and thus possesses a unique fixed point.

%
\end{proof}

We denote the subsequent glued solutions which also solve the transverse matching problem as 
$$
w_1^\mathrm{tr}(\mf{s}_1,\mu,L)(\xi):= w_1^\mathrm{gl}(\mf{s}_1, \mf{u}_2^\mathrm{tr}, \mu,L)(\xi),\quad
w_2^\mathrm{tr}(\mf{s}_1,\mu,L)(\xi):= w_2^\mathrm{gl}(\mf{s}_1, \mf{u}_2^\mathrm{tr},\mu,L)(\xi),
$$
where $\mf{u}_2^\mathrm{tr} = \mf{u_2}^\mathrm{tr}(\mf{s}_1,\mu)$.

\subsection{Non-Transverse matching}\label{ss:ntm}
Now let us match the glued solution with $W^\rcu(U_p)$ in $\Sigma_1$. In a neighborhood of $q_1(0;\mu)$, the intersection of the center-unstable manifold $W^\rcu(U_p)\cap \Sigma_1$ can be described as a graph $q_1^-(0;\mu) + v + h_p(v;\mu)$ with
$$
h_p : E_{-1}^\rcu(0)\cap \Sigma_2 \rightarrow  \tl E_1^\rss(0)\oplus E_1^\perp, \quad v\in E_{-1}^\rcu(0)\cap \Sigma_2
$$
where $E_{-1}^\rcu(0):=P_{-1}^\rcu(0)X$, $E_1:=E_{-1}^\rcu(0)+E_{1}^\rss(0)$, and $\tl E_1^\rss(0) $ is the orthogonal complement of $Z= E_{-1}^\rcu(0)\cap E_1^\rss(0)$ in $E_1^\rss(0)$. Also define the orthogonal projection $Q_1: E_{-1}^\rcu(0)\rightarrow Z\cap\Sigma_2$ and let $\tl v_1 = (I-Q_1) v,\,\, \mf{v}_1 = Q_1 v$.  We remark that since $\fr{d}{d\xi} q_1^0(0) \in Z$, the range of $Q$ is a one-dimensional subspace.

We thus wish to solve the matching equation
\beq\label{e:ntr}
w_1^\mathrm{tr}(\mf{s}_1,\mu,L)(0) = q_1^-(0;\mu) - q_1(0;\mu)+\tl v_1+ \mf{v}_1 + h_p(\tl v_1+ \mf{v}_1;\mu).
\eeq
In order to do this we shall first solve the projected equation on $E_1$ after which we can the solve on the complement $E_1^\perp$ using Melnikov integrals.  

To achieve this first step, we apply the orthogonal projection $P_1: X\rightarrow E_1$ to \eqref{e:ntr} and use \eqref{e:qadj} to obtain
\beq\label{e:ntrp}
\mf{s}_1 - \tl v_1 = \mf{v}_1+ P_1\lp( h_p(\tl v_1+ \mf{v}_1;\mu) - \Phi_1^\rsu(0,L) \mf{u}_1^\mathrm{gl}(\mf{s}_1,\mf{u}_2^\mathrm{tr})- \int_L^0 \Phi_1^\rsu(0,\zeta)\tl g_1(\zeta, \mf{W}^0_\mathrm{tr};\mu)d\zeta    \rp).
\eeq
We then obtain the following result,
\begin{Proposition}\label{p:ntrp}
There exists $\epsilon_3>0$ such that the following holds.
For all $\epsilon<\epsilon_3$ there exists a $\delta>0$ such that there is a unique $C^k$-solution $(\mf{s}_1^\mathrm{m},\tl v_1^\mathrm{m})(\mf{v}_1,\mu,L) \in X_{\gso, L}^\epsilon\times X_{\gso, L}^\epsilon$ of \eqref{e:ntrp} for each $\mf{v}_1\in X^\epsilon_{\gso, L},$ and $\mu\in \Lambda_{L}^\delta$.
 \end{Proposition}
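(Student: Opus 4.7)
The plan is to mirror the Uniform Contraction argument of Propositions \ref{p:fp}, \ref{p:fp2}, \ref{p:gl}, and \ref{p:tr}: recast \eqref{e:ntrp} as a fixed-point equation on $X_{\gso,L}^\epsilon\times X_{\gso,L}^\epsilon$ for the pair $(\mf{s}_1,\tl v_1)$ parameterized by $(\mf{v}_1,\mu,L)$, and then verify invariance of a suitable ball and a contractive Lipschitz estimate. The first step is to invert the linear part. Since $Z := E_1^\rss(0)\cap E_{-1}^\rcu(0)$ by definition, any vector lying both in $E_1^\rss(0)$ and in the orthogonal complement of $Z$ inside $E_{-1}^\rcu(0)$ must lie in $Z\cap Z^\perp = \{0\}$; combined with $E_1 = E_1^\rss(0) + E_{-1}^\rcu(0)$ this produces the direct sum decomposition
\[
E_1 \;=\; E_1^\rss(0)\;\oplus\;\lp(E_{-1}^\rcu(0)\ominus Z\rp),
\]
so that the canonical linear map $S_1:(\mf{s}_1,\tl v_1)\mapsto \mf{s}_1 - \tl v_1$ is a bounded linear isomorphism with bounded inverse. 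Equation \eqref{e:ntrp} can then be written
\[
(\mf{s}_1,\tl v_1) \;=\; S_1^{-1}\Bigg[\mf{v}_1 + P_1\lp(h_p(\tl v_1+\mf{v}_1;\mu) - \Phi_1^\rsu(0,L)\mf{u}_1^\mathrm{gl} - \int_L^0 \Phi_1^\rsu(0,\zeta)\,\tl g_1(\zeta,\mf{W}_\mathrm{tr}^0;\mu)\,d\zeta\rp)\Bigg].
\]

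Next I estimate each term on the right-hand side multiplied by the weight $\re^{\gso L}$. The graph $h_p$ vanishes quadratically in $(v,\mu)$, so it contributes at most $K(\epsilon+\delta)\epsilon$ after the weight is applied. For the boundary term, the dichotomy from Proposition \ref{p:expdic} together with $\mf{u}_1^\mathrm{gl}\in X_{\eta_\rsu+\guo,L}^\epsilon$ (Proposition \ref{p:gl}) yields
\[
\re^{\gso L}\lp|\Phi_1^\rsu(0,L)\mf{u}_1^\mathrm{gl}\rp|\;\leq\; K\,\re^{(\gso+r_\rsu-\eta_\rsu-\guo)L}\epsilon \;=\; K\,\re^{-\delta_\rsu L}\epsilon,
\]
where the identity $\gso=\guo$ built into Section \ref{ss:sil} cancels the would-be growth coming from integrating $\Phi_1^\rsu$ backwards from $L$ to $0$. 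The integral term is controlled exactly as in Proposition \ref{p:fp}, using the pointwise bounds \eqref{e:pt2}--\eqref{e:pt1} from Lemma \ref{h:g} applied to $\tl g_1 = g_1(\cdot,w_1^\mathrm{gl};\mu)$ together with the size estimate \eqref{e:gl3} on $w_1^\mathrm{gl}$; the resulting contribution is of order $(\epsilon+\delta)\epsilon$. Summing and applying $S_1^{-1}$, the right-hand side maps $X_{\gso,L}^\epsilon\times X_{\gso,L}^\epsilon$ into itself for $\epsilon$, $\delta$ sufficiently small and $L$ sufficiently large. Repeating the identical estimates on differences, with the $C^k$ dependence of $\mf{u}_1^\mathrm{gl}$, $\mf{s}_2^\mathrm{gl}$, and $\mf{u}_2^\mathrm{tr}$ on their arguments from Propositions \ref{p:gl}--\ref{p:tr}, produces a Lipschitz constant below $1/2$. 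The Uniform Contraction Principle then furnishes a unique $C^k$-smooth fixed point $(\mf{s}_1^\mathrm{m},\tl v_1^\mathrm{m})(\mf{v}_1,\mu,L)$.

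The main technical subtlety is not any single calculation but the bookkeeping of exponential weights. The backwards-evolution factor $\re^{r_\rsu L}$ produced by $\Phi_1^\rsu(0,L)$ must be absorbed by the exponential decay encoded in the Silnikov weight on $\mf{u}_1$, and this hinges on the equality $\gso=\guo$ together with the positivity of $\delta_\rsu$ prescribed in Section \ref{ss:sil}. The same balancing is precisely what motivated the particular choice of weights flagged in Remark \ref{r:norm}; under a different assignment the fixed-point operator could fail to preserve the weighted ball, and the argument would break down exactly at this projected matching step.
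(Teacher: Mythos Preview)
Your proposal is correct and follows essentially the same route as the paper, which simply records that the proof ``follows in a similar manner as in Proposition \ref{p:tr}'' and omits the details. You have supplied exactly those details: invert the linear part $(\mf{s}_1,\tl v_1)\mapsto \mf{s}_1-\tl v_1$ using the direct-sum decomposition $E_1 = E_1^\rss(0)\oplus\bigl(E_{-1}^\rcu(0)\ominus Z\bigr)$, estimate the remaining terms in the weighted norm, and close by uniform contraction---precisely mirroring the structure of Proposition \ref{p:tr}. Your observation that the cancellation $\gso=\guo$ is what absorbs the backward-evolution factor $\re^{r_\rsu L}$ from $\Phi_1^\rsu(0,L)$ is the key bookkeeping point and is consistent with the weight choices of Section \ref{ss:sil}.
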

\begin{proof}
The proof follows in a similar manner as in Proposition \ref{p:tr} and we omit it.
\end{proof}

We denote the corresponding solutions as $$w_i^\mathrm{m}(\mf{v}_1,\mu,L)(\xi) = w_i^\mathrm{tr}(\mf{s}_1^\mathrm{m}(\mf{v}_1,\mu,L),\mu,L)(\xi).$$

Now we wish to solve the component of \eqref{e:ntr} in $E_1^\perp$ and thus complete the gluing matching procedure. This can be done by solving the equations
\beq
\la w_1^\mathrm{n}(\mf{v}_1,L)(0), e_{j,0}^* \ra =\la  q_1^-(0;\mu) - q_1(0;\mu) , e_{j,0}^* \ra+  \langle h_p( \mf{v}_1+\tl v^\mathrm{n}_1;\mu) , e_{j,0}^* \rangle, \qquad j = 1,2,\notag
\eeq
where and  $e_{1,0}^*, e_{2,0}^*\in X$ have unit norm and form a basis of $E_1^\perp$.  Also, these basis elements correspond to bounded solutions $e_j^*(\xi)$ of the adjoint variational equation \eqref{e:adjvar} which satisfy $e_j^*(0) = e_{j,0}^*$.   Applying the variation of constants formula to $w_1^\mathrm{n}$ and noticing that $\la P_{1}^\rss w_1^\mathrm{n}(0), e_{j,0}^*\ra = 0$ we then obtain, for $j = 1,2$,
\beq\label{e:mel0}
\lp\langle \Phi_1^\rsu(0,L) \mf{u}_1^\mathrm{tr}, e_{j,0}^* \rp\rangle  =\la  q_1^-(0;\mu) - q_1(0;\mu) , e_{j,0}^* \ra+  \langle h_p( \mf{v}_1+\tl v^\mathrm{n}_1;\mu) , e_{j,0}^* \rangle - \lp\langle \int_L^0 \Phi_1^\rsu(0,\zeta)\tl g_1(\zeta,\mf{W}^0_\mathrm{tr};\mu)d\zeta , e_{j,0}^*\rp\rangle.
\eeq

The expressions from \eqref{e:sstrj} and \eqref{e:cutrj} give that
\begin{align}
\la  q_1^-(0;\mu) - q_1(0;\mu) , e_{j,0}^* \ra &=  \la \int_{-\infty}^0 \Phi_{-1}^\rs(\xi,\zeta)G_1(\zeta,v^\rcu(\zeta);\mu)d\zeta + \int_0^\infty \Phi_1^\rsu(\xi,\zeta)G_1(\zeta,v^\rss(\zeta);\mu)d\zeta, e_{j,0}^* \ra\notag\\
&=\lp( \int_{-\infty}^\infty\la\, D_\mu F(q_1^0(\zeta);0), e^*_j(\zeta) \,\ra d\zeta\rp) \mu + \mc{O}(|\mu|^2).\notag
\end{align}

Now, since there exists constants $ C>0$ such that  
\begin{align}
|h_p(\mf{v}_1+\tl v^\mathrm{n}_1(\mf{v}_1;\mu);\mu)|  &\leq C(|\mf{v}_1|+|\mu|)^2,\notag\\
| \lp\langle \int_L^0 \Phi_1^\rsu(0,\zeta)\tl g_1(\zeta,\mf{W}^0_\mathrm{tr};\mu)d\zeta , e_{j,0}^*\rp\rangle|& \leq
\re^{-\gso L} C\lp( |\mf{v}_1|+ |\mu|\rp)^2
 \end{align}
 for $\mu$ sufficiently small and $L$ sufficiently large, we can use the quadratic estimates on $\tl g_1$ to obtain that \eqref{e:mel0} has the form
\beq\label{e:mel}
\la\Phi_1^\rsu(0,L) \mf{u}_1^\mathrm{tr}, e_{j,0}^* \ra =\int_{-\infty}^\infty\la\, D_\mu F(q_1^0(\zeta);0), e^*_j(\zeta) \,\ra d\zeta \,\,\mu + \mc{O}\lp( (|\mu|+|\mf{v}_1|)^2\rp), 
\eeq
or after rearranging,
\beq\label{e:mel4}
\mc{M}\mu(\mf{v}_1,L) =  \Big(  \sum_{j =1,2}    \la \Phi_{1}^\rsu(0,L) \mf{u}_1^\mathrm{tr}, e_{j,0}^*\ra e_{j,0}^*   \Big)+\mc{O}\lp( (|\mu|+|\mf{v}_1|)^2\rp).
\eeq

Since the Melnikov mapping is invertible by assumption, the implicit function theorem then gives, for $L$ sufficiently large, that there exists a family of solutions $\mu_\mathrm{m}(\mf{v}_1,L)$ with the following leading order expansion
\beq\label{e:mel5}
\mu_\mathrm{\rtf}(\mf{v}_1,L) = \mc{M}^{-1} \Big(  \sum_{j =1,2}    \la \Phi_{1}^\rsu(0,L) \mf{u}_1^\mathrm{tr}, e_{j,0}^*\ra e_{j,0}^*   \Big)+\mc{O}\lp(|\mf{v}_1|   \rp).
\eeq

Let us denote the corresponding glued solution for these parameters as $w_i^\mathrm{\rtf}(\xi) := W_i^\mathrm{\rtf}(\mf{v}_1,\mu_\mathrm{\rtf}(\mf{v}_1,L),L)(\xi)$ and the projections as $w_{i}^{\mathrm{\rtf},\rss/\rsu}(\xi) = P_i^{\rss/\rsu}(\xi) w_i^\mathrm{\rtf}(\xi)$.  From these we immediately obtain the desired heteroclinic solution $U_{\rtf}$ as a concatenation of the solutions $U_{i}^\rtf:= q_i + w_i^\mathrm{\rtf}$.  This gives the existence of a one-parameter family of solutions for each $L$ as described in the statement of the theorem. By uniqueness, the parameter $\mf{v}_1$ must parameterize the group orbit of a solution under the $S^1$-action.


\subsection{Leading order bifurcation equation expansions}\label{ss:lbf} \label{ss:exp}
In this section, we complete the proof of the theorem by obtaining the desired expansion of the bifurcation equation \eqref{e:mel5} obtained in the previous section.  Let us ease notation by denoting $w_i = w_i^\mathrm{\rtf}, w_{i}^{\rss/\rsu} = w_{i}^{\mathrm{\rtf},\rss/\rsu},$ and  $\mu = \mu_\mathrm{\rtf}.$ 
 To obtain finer expansions we isolate the leading components of the $\xi$-dependent subspaces as described in \eqref{e:spl} above,
$$
E_{i}^\rss(\xi) = E_{i}^{\rss,\mathrm{l}}(\xi) +  E_{i}^{\rss,\rs}(\xi),\qquad E_{i}^\rsu(\xi) = E_{i}^{\rsu,\mathrm{l}}(\xi) +  E_{i}^{\rsu,\ru}(\xi), \quad i = 1,2.
$$
In the following we will study these real subspaces using the complexified flow. Here the complexification of the eigenspaces $E_{1,\infty}^{\rss/\rsu,\rl}$ are spanned by the eigenvectors $e_{1,\infty}^{\rss/\rsu,\rl},\,\,\overline{e_{1,\infty}^{\rss/\rsu,\rl}}$ of $D_UF(U_*)$ corresponding to the leading complex-conjugate eigenvalues $\nu_{\rss/\rsu},\,\,\overline{\nu_{\rss/\rsu}}$.  
Before continuing, we prove the following three lemmata which are needed in our derivation of the leading order bifurcation equation.

\begin{Lemma}\label{l:ex}
For $L>0$ sufficiently large there exist constants $c_1,c_2\in \C$ and $\rho>0$ such that the following asymptotic expansions hold
\begin{align}
\Phi_2^\rsu(-L,0)P_{2,+}^{\rsu}(0)\Phi_2^\rss(0,-L) e_{1,\infty}^{\rss,\rl} = 
c_1e^{\Delta \nu L}e_{1,\infty}^{\rsu,\rl} + \mc{O}(\re^{-(\Delta r +\rho) L}),\notag\\
\Phi_2^\rsu(-L,0)P_{2,+}^{\rsu}(0)\Phi_2^\rss(0,-L) \overline{e_{1,\infty}^{\rss,\rl} }= 
c_2\re^{\overline{\Delta\nu}L}\overline{e_{1,\infty}^{\rsu,\rl}}  + \mc{O}(\re^{-(\Delta r +\rho) L}).\notag
\end{align}

\end{Lemma}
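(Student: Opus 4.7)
The plan is to compute the action of each factor in the composition $\Phi_2^\rsu(-L,0) P_{2,+}^\rsu(0) \Phi_2^\rss(0,-L)$ separately, by exploiting the fact that the coefficient matrix $A_2(\xi) = D_U F(q_2(\xi);0)$ converges exponentially to the constant limit $D_U F(U_*;0)$ as $\xi \to -\infty$ (at rate $e^{r_\ru \xi}$, by Hypothesis \ref{h:fa}). This allows me to replace $\Phi_2^\rss(0,-L)$ and $\Phi_2^\rsu(-L,0)$ by their asymptotic constant-coefficient counterparts, modulo controlled errors. Throughout I would work in the complexification of the variational equation and use the refined splitting \eqref{e:spl} into leading and non-leading subspaces, so that the action of $P_{2,+}^\rsu(0)$ on the leading subspaces can be read off directly from Hypothesis \ref{h:inc}.

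The first step is to introduce complexified solutions $\tilde\phi_2^{\rss,\rl}(\xi)$ and $\tilde\psi_2^{\rsu,\rl}(\xi)$ of the variational equation \eqref{e:expd} (with $i=2$), uniquely characterized by $\tilde\phi_2^{\rss,\rl}(\xi) = e^{\nu_\rss \xi}\bigl(e_{1,\infty}^{\rss,\rl} + o(1)\bigr)$ and $\tilde\psi_2^{\rsu,\rl}(\xi) = e^{\nu_\rsu \xi}\bigl(e_{1,\infty}^{\rsu,\rl} + o(1)\bigr)$ as $\xi \to -\infty$. Their existence and sharp asymptotics follow from a standard variation-of-constants fixed-point argument applied to the perturbation $A_2(\xi) - A_2(-\infty)$. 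Then I would proceed as follows. By Lemma \ref{l:proj}, $P_2^\rss(-L) e_{1,\infty}^{\rss,\rl} = e_{1,\infty}^{\rss,\rl} + O(e^{-\Delta \eta L})$; using the asymptotic characterization of $\tilde\phi_2^{\rss,\rl}$ this gives $\Phi_2^\rss(0,-L) e_{1,\infty}^{\rss,\rl} = e^{\nu_\rss L}\tilde\phi_2^{\rss,\rl}(0)$ up to faster-decaying remainders. Next, by Hypothesis \ref{h:inc} the restricted map $\mc{P}^\rl = P_{2,+}^\rsu(0)|_{E_2^{\rss,\rl}(0)}$ is an isomorphism onto $E_2^{\rsu,\rl}(0)$, and by its $T_1$-equivariance (Hypothesis \ref{h:sym}, cf.\ Remark \ref{r:fsym}) it is complex-linear on the complexified leading subspaces; hence there is a unique $c_1 \in \C$ with $P_{2,+}^\rsu(0)\tilde\phi_2^{\rss,\rl}(0) = c_1 \tilde\psi_2^{\rsu,\rl}(0)$. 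Finally $\Phi_2^\rsu(-L,0)\tilde\psi_2^{\rsu,\rl}(0) = \tilde\psi_2^{\rsu,\rl}(-L) = e^{-\nu_\rsu L}\bigl(e_{1,\infty}^{\rsu,\rl} + o(1)\bigr)$ by construction. Composing the three steps delivers the leading term $c_1 e^{\Delta \nu L} e_{1,\infty}^{\rsu,\rl}$. The second expansion follows by replacing $e_{1,\infty}^{\rss,\rl}$ by its complex conjugate and repeating verbatim; by real-linearity of the underlying operator, one in fact has $c_2 = \overline{c_1}$.

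The main technical obstacle is the bookkeeping of the various error contributions and verifying that they all fit into the single bound $O(e^{-(\Delta r + \rho) L})$. Three competing rates must be balanced: the perturbation rate $e^{r_\ru \xi}$ of $A_2(\xi)$, which by Hypothesis \ref{h:eig1}(i) satisfies $r_\ru > 2r_\rss - r_\rsu$ so that the $o(1)$ corrections in the asymptotic expansions of $\tilde\phi_2^{\rss,\rl}$ and $\tilde\psi_2^{\rsu,\rl}$ decay sufficiently fast; the projection error from Lemma \ref{l:proj}, governed by the spectral gap $\Delta \eta$; and the contribution from the non-leading strong-stable subspace $E_2^{\rss,\rs}(-L)$ picked up by $P_2^\rss(-L) e_{1,\infty}^{\rss,\rl}$, which is of order $e^{-r_\rss L}$ or smaller. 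Showing that the worst of these three is still smaller than $e^{-(\Delta r + \rho)L}$ is what pins down the precise spectral-gap requirement $r_\ru > 2r_\rss - r_\rsu$ in Hypothesis \ref{h:eig1}(i) and constitutes the bulk of the estimates.
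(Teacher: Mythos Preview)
Your approach is essentially the same as the paper's. The paper introduces vectors $v_1^\rss, v_2^\rss \in E_2^{\rss,\rl}(0)$ and $v_1^\rsu, v_2^\rsu \in E_2^{\rsu,\rl}(0)$ characterized by the asymptotics of $\Phi_2^{\rss,\rl}(-L,0) v_i^\rss$ and $\Phi_2^{\rsu}(-L,0) v_i^\rsu$ as $L\to\infty$; these are exactly your $\tilde\phi_2^{\rss,\rl}(0)$ and $\tilde\psi_2^{\rsu,\rl}(0)$ (and their conjugates). Both arguments then invoke Hypothesis~\ref{h:inc} and the $T_1$-equivariance to conclude $\mc{P}^\rl$ is complex-linear, and compose the three factors to get the leading term.

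One minor misattribution: the condition $r_\ru > 2r_\rss - r_\rsu$ from Hypothesis~\ref{h:eig1}(i) is \emph{not} what controls the errors in this lemma. The $\rho>0$ here comes from the spectral gaps separating $\nu_\rss$ and $\nu_\rsu$ from the remaining (non-leading) stable spectrum, together with the exponential convergence of $A_2(\xi)$ to its limit; any $r_\ru>0$ would suffice for this lemma. The sharper requirement $r_\ru > 2r_\rss - r_\rsu$ is used later, when bounding $\la \Delta q(L), e_j^*(L)\ra$ in the bifurcation analysis. Also, your observation $c_2=\overline{c_1}$ is correct but is not part of the lemma statement; the paper deduces it afterwards, in the course of proving Proposition~\ref{p:bif}, from the reality of the underlying flow.
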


\begin{proof}
 
 
First note that by its construction, the restricted dichotomy $\Phi^{\rss,\rl}_2(\xi,\zeta)$ is well-defined for both $\xi\geq \zeta$ and $\xi\leq \zeta$.  Hence there exists vectors $v_1^\rss,v_2^\rss$ which span $E^{\rss,\rl}_2(0)$ such that 
\begin{align}
 \Phi_2^{\rss,\rl}(-L,0)v_1^\rss&= \re^{-\nu_\rss L} e_{1,\infty}^{\rss,\rl} + \mc{O}(\re^{(r_\rss - \rho)L)}),\notag\\
  \Phi_2^{\rss,\rl}(-L,0)v_2^\rss&= \re^{-\overline{\nu_\rss} L} \overline{e_{1,\infty}^{\rss,\rl}} + \mc{O}(\re^{(r_\rss - \rho)L)}).\notag
 \end{align}
 Applying $\Phi_2^{\rss}(0,-L)$ to both sides of these equations, we then obtain
\begin{align}
 \Phi_2^{\rss}(0,-L)  e_{1,\infty}^{\rss,\rl}  =  \re^{\nu_\rss L} v_1^\rss+ \mc{O}(\re^{-(r_\rss + \rho)L)}),\notag\\
 \Phi_2^{\rss}(0,-L)  \overline{e_{1,\infty}^{\rss,\rl}}  =  \re^{\overline{\nu_\rss} L} v_2^\rss+ \mc{O}(\re^{-(r_\rss + \rho)L)}).\notag
\end{align}
 
 In a similar manner, there exists vectors $v_{1}^\rsu,v_2^\rsu$ which span $E_2^{\rsu,\rl}(0)$ such that 
\begin{align}
 \Phi_2^\rsu(-L,0) v_1^\rsu   &=\re^{-\nu_\rsu L} e^{\rsu,\rl}_{1,\infty} + \mc{O}(\re^{(r_\rsu - \rho)L}),\notag\\
  \Phi_2^\rsu(-L,0) v_2^\rsu   &=\re^{-\overline{\nu_\rsu} L} \overline{e^{\rsu,\rl}_{1,\infty}} + \mc{O}(\re^{(r_\rsu - \rho)L}).
\end{align}
 
 Hypothesis \ref{h:inc} then gives that there exists constants $c_1,c_1' \in \C$ not both zero such that 
\begin{align}
P_{2,+}^{\rsu}v_1^{\rss} = \mc{P}^\rl v_1^{\rss}
&=c_1 v_1^\rsu + c_1' v_2^\rsu.\notag
\end{align}
Since $\mc{P}^\rl$ is an isomorphism which commutes with the action $T_1(\theta)$, it can then be obtained that $\mc{P}^\rl$ is complex-linear so that $c_1'= 0$.  Combing this all together we obtain,
\begin{align}
\Phi_2^\rsu(-L,0)P_{2,+}^{\rsu}(0)\Phi_2^\rss(0,-L) e_{1,\infty}^{\rss,\rl} &= 
 c_1 \re^{(\nu_\rss-\nu_\rsu) L}e_{1,\infty}^{\rsu,\rl}   + \mc{O}(\re^{-(\Delta r + \rho)L }).
\end{align}
The expansion for $\overline{e_{1,\infty}^{\rss,\rl}}$ follows in an analogous way for some constant $c_2\in \C$.

\end{proof}

In a similar manner, we can also obtain expansions for bounded solutions of the adjoint variational equation along $q_1$.
\begin{Lemma}\label{l:exadj}
Let $q_1(0)$ be sufficiently close to $U_*$, $L$ sufficiently large, and $E_1^\perp=\mathrm{span}\{e_{j,0}^*\}_{j=1,2}$. Then, for some $\rho>0$, there exists a complex eigenvector $e_{j,\infty}^*$ of $(D_U F(U_*))^*$ with eigenvalue $-\overline{\nu_\rsu}$ and $|e_{j,\infty}^*| = 1$,  such that the bounded solutions of the adjoint equation satisfy
$$
e_{j}^*(L) =\Phi_1^{*,\mathrm{us}}(L,0) e_{j,0}^* = \lp(\tl c_j \re^{-\overline{\nu_\rsu} L}e_{j,\infty}^*+\mathrm{c.c.} \rp)(1+\mc{O}(\re^{-\rho L})),
$$
for some constants $\tl c_j\in \C$, where $\Phi_1^{*,\mathrm{us}}$ denotes the dichotomy of the adjoint variational equation \eqref{e:adjvar} associated with the spectral set $\{\nu\,:\, \rre\{ \nu\} \leq r_\rsu\}$.
\end{Lemma}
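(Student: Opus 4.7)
The plan is to mimic the argument of Lemma \ref{l:ex}, now applied to the adjoint variational equation \eqref{e:adjvar}, exploiting the asymptotic autonomy $A_1(\xi)\to D_UF(U_*;0)$ as $\xi\to+\infty$ together with the duality between the variational and adjoint flows. No analogue of Hypothesis \ref{h:inc} is needed here since we only propagate forward along a single orbit rather than match two distinct heteroclinic branches.

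First, I dualize Hypothesis \ref{h:eig1}(i): the asymptotic operator $-D_UF(U_*;0)^*$ has eigenvalues $\{-\bar\nu:\nu\in\Sigma(D_UF(U_*;0))\}$, so the spectral set $\{\mu:\rre\mu\leq r_\rsu\}$ contains the leading isolated complex-conjugate pair $-\overline{\nu_\rsu},-\nu_\rsu$ on the line $\rre\mu=r_\rsu$, separated by a gap of size at least some $\rho>0$ from the remainder. Applying the roughness theorem for exponential dichotomies (\cite[Thm.\ 3.3.3]{Peterhof97}, \cite{Sandstede01}), equation \eqref{e:adjvar} inherits on $[0,\infty)$ the dichotomy $\Phi_1^{*,\mathrm{us}}$ associated with this set and, by intersection with a second dichotomy cut tighter around the leading pair, a restricted sub-dichotomy $\Phi_1^{*,\mathrm{us},\rl}$ on a two-dimensional $\xi$-dependent subspace $E_1^{*,\mathrm{us},\rl}(\xi)$ whose fibre at $+\infty$ is $\mathrm{span}_\C\{e_{j,\infty}^*,\overline{e_{j,\infty}^*}\}$, with $e_{j,\infty}^*$ a unit eigenvector of the asymptotic adjoint at $-\overline{\nu_\rsu}$.

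Second, since $e_{j,0}^*\in E_1^\perp=(T_{q_1(0)}W^\rcu(U_p)+T_{q_1(0)}W^\rss(U_*))^\perp$, the standard Fredholm duality for linear heteroclinic boundary value problems (see e.g.\ \cite{Sandstede01}) places $e_{j,0}^*\in R(P_1^{*,\mathrm{us}}(0))$, so that $e_j^*(L)=\Phi_1^{*,\mathrm{us}}(L,0)e_{j,0}^*$ for all $L\geq 0$. I decompose
\begin{align*}
e_{j,0}^* = v_j^{\rl}+v_j^{\mathrm{sub}},\qquad v_j^{\rl}\in E_1^{*,\mathrm{us},\rl}(0),\quad v_j^{\mathrm{sub}}\in E_1^{*,\mathrm{us},\mathrm{sub}}(0),
\end{align*}
where $E_1^{*,\mathrm{us},\mathrm{sub}}$ is the complement of $E_1^{*,\mathrm{us},\rl}$ inside $E_1^{*,\mathrm{us}}$.

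Third, arguing as in Lemma \ref{l:ex}, the restricted flow $\Phi_1^{*,\mathrm{us},\rl}(L,0)$ is asymptotically diagonal in the basis $\{e_{j,\infty}^*,\overline{e_{j,\infty}^*}\}$ with rates $\re^{-\overline{\nu_\rsu} L}$ and $\re^{-\nu_\rsu L}$. Expanding $v_j^{\rl}$ in this basis yields constants $\tl c_j,\tl c_j'\in\C$ with
\begin{align*}
\Phi_1^{*,\mathrm{us},\rl}(L,0)\,v_j^{\rl} \;=\; \tl c_j\,\re^{-\overline{\nu_\rsu} L}\,e_{j,\infty}^* + \tl c_j'\,\re^{-\nu_\rsu L}\,\overline{e_{j,\infty}^*} + \mc{O}(\re^{(r_\rsu-\rho)L}).
\end{align*}
Reality of the adjoint equation together with the real-subspace structure of $E_1^{*,\mathrm{us},\rl}(0)$ forces $\tl c_j'=\overline{\tl c_j}$, producing the ``$+\,\mathrm{c.c.}$'' form. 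The subleading contribution $\Phi_1^{*,\mathrm{us}}(L,0)v_j^{\mathrm{sub}}$ is bounded by $\mc{O}(\re^{(r_\rsu-\rho)L})$, which, factored relative to $\re^{-\overline{\nu_\rsu}L}$ (modulus $\re^{r_\rsu L}$), collapses into the $(1+\mc{O}(\re^{-\rho L}))$ correction.

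The main obstacle is the careful extraction of the sub-dichotomy $\Phi_1^{*,\mathrm{us},\rl}$ in the infinite-dimensional sectorial setting and the $\re^{-\rho\xi}$-rate convergence of the leading subspace $E_1^{*,\mathrm{us},\rl}(\xi)$ to its autonomous limit; both follow from intersecting two exponential dichotomies of \eqref{e:adjvar} and invoking the perturbation estimates of \cite[Cor.\ 2]{Peterhof97}, but require tracking projector norms uniformly in $\xi$. Everything else is a direct translation of the finite-dimensional spectral-splitting computation used in Lemma \ref{l:ex}.
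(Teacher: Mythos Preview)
Your proposal is correct and follows exactly the approach the paper intends: the paper does not give an explicit proof of Lemma \ref{l:exadj} but only states that it is obtained ``in a similar manner'' to Lemma \ref{l:ex}, and you have carried out precisely that dualization, including the correct observations that no analogue of Hypothesis \ref{h:inc} is required (since one is merely propagating along $q_1$ rather than matching through a transverse intersection) and that reality of the adjoint flow forces $\tl c_j'=\overline{\tl c_j}$. The only minor refinement worth noting is that the placement $e_{j,0}^*\in R(P_1^{*,\mathrm{us}}(0))$ is most cleanly seen from the orthogonality $e_{j,0}^*\perp E_1^\rss(0)=R(P_1^\rss(0))$ together with the standard identity $R(P_1^\rss(0))^\perp = R(P_1^{*,\mathrm{us}}(0))$ for adjoint dichotomies.
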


Before completing the analysis of the bifurcation equation, we need one more prepatory lemma which estimates the scalar product contained inside of \eqref{e:mel4}.
\begin{Lemma}\label{l:scp}
There exists a $\rho>0$ such that for all $L$ sufficiently large, and $j = 1,2$ we have the following expansion
\beq\label{e:scp}
\la \mf{u}_1, e_j^*(L)\ra = \la w_2^\rsu(-L), e_j^*(L)\ra  + \mc{O}(\re^{-(2\Delta \eta + \rho)L}).
\eeq
\end{Lemma}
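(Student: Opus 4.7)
The plan is to use the gluing equation \eqref{e:gl} to convert the lemma into a bound on $\mf{u}_1 - w_2^\rsu(-L)$, and then estimate each resulting piece using the projection bounds from Lemma \ref{l:proj}, the Silnikov weight bounds from Propositions \ref{p:fp}--\ref{p:fp2}, the heteroclinic asymptotics in Hypothesis \ref{h:fa}, the adjoint estimate from Lemma \ref{l:exadj}, and the biorthogonality properties of $e_{j,\infty}^*$. Since \eqref{e:gl} yields $\mf{u}_1 = P_1^\rsu(L) w_2^*(-L) + P_1^\rsu(L)\Delta q(L)$ with $\Delta q(L):=q_2(-L) - q_1(L)$, and since $w_2^\rsu(-L) = P_2^\rsu(-L)w_2^*(-L)$, I would begin from the exact decomposition
$$
\mf{u}_1 - w_2^\rsu(-L) = \bigl[P_1^\rsu(L) - P_2^\rsu(-L)\bigr]\,w_2^*(-L) + P_1^\rsu(L)\,\Delta q(L),
$$
and pair each of the two right-hand terms with $e_j^*(L)$.

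For the $\Delta q$ contribution I would substitute the asymptotics of Hypothesis \ref{h:fa}, namely $q_1(L) = a\re^{\nu_\rss L}e_{1,\infty}^{\rss,\rl} + \mathrm{c.c.} + \mc{O}(\re^{-(r_\rss + \epsilon)L})$ and $q_2(-L) = b\re^{-\nu_\ru L}e_{1,\infty}^{\ru} + \mc{O}(\re^{-(r_\ru + \epsilon)L})$, together with $e_j^*(L) = (\tl c_j\re^{-\overline{\nu_\rsu}L}e_{j,\infty}^* + \mathrm{c.c.})(1+\mc{O}(\re^{-\rho L}))$ from Lemma \ref{l:exadj}. The biorthogonality of $e_{j,\infty}^*$ (an $A^*$-eigenvector with eigenvalue $\overline{\nu_\rsu}$) against the $A$-eigenvectors $e_{1,\infty}^{\rss,\rl}$ and $e_{1,\infty}^{\ru}$ kills the leading pairings, leaving a residual from $q_1$ controlled by the quadratic graph correction of $W^\rss(U_*)$ near $U_*$ (tangent to $E_{1,\infty}^\rss$) together with the $\re^{-\Delta\eta L}$ perturbation of $P_1^\rsu(L)$ away from $P_{1,\infty}^\rsu$, and a residual from $q_2$ of order $\re^{-(r_\ru - r_\rsu)L}$. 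The gap inequality $r_\ru > 2r_\rss - r_\rsu$ from Hypothesis \ref{h:eig1} and the freedom to choose $\eta_\rss, \eta_\rsu$ close to $r_\rss, r_\rsu$ place both within $\mc{O}(\re^{-(2\Delta\eta + \rho)L})$.

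For the projection-difference contribution, Lemma \ref{l:proj} furnishes $|P_1^\rsu(L) - P_2^\rsu(-L)| \leq 2K\re^{-\Delta\eta L}$. Splitting $w_2^*(-L) = w_2^{*,\rss}(-L) + w_2^\rsu(-L)$, I would observe that $[P_1^\rsu(L) - P_2^\rsu(-L)]w_2^{*,\rss}(-L) = P_1^\rsu(L)w_2^{*,\rss}(-L)$ (since $w_2^{*,\rss}(-L) \in \ker P_2^\rsu(-L)$) and $[P_1^\rsu(L) - P_2^\rsu(-L)]w_2^\rsu(-L) = -P_1^\rss(L)w_2^\rsu(-L)$ (since $w_2^\rsu(-L) \in E_2^\rsu(-L)$). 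For the first piece, the near-alignment $E_2^\rss(-L) \to E_{1,\infty}^\rss$ combined with the fact that $P_{1,\infty}^\rsu$ annihilates $E_{1,\infty}^\rss$ gains the compensating factor $\re^{-\Delta\eta L}$ beyond the Silnikov weight $|w_2^{*,\rss}(-L)| \leq C\re^{-\gst L}$; for the second piece, a symmetric argument using $P_1^\rss(L) \to P_{1,\infty}^\rss$ acting on $w_2^\rsu(-L) \in E_2^\rsu(-L) \approx E_{1,\infty}^\rsu$ yields the analogous gain against $|w_2^\rsu(-L)| \leq C\re^{-\gut L}$. Pairing with $|e_j^*(L)| \lesssim \re^{r_\rsu L}$ and tuning $\eta_\rss, \eta_\rsu$ sufficiently close to $r_\rss, r_\rsu$ through the free small parameters $\delta_\rss, \delta_\rsu$ yields the stated $\mc{O}(\re^{-(2\Delta\eta + \rho)L})$ bound.

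The main technical obstacle lies in the projection-difference term applied to $w_2^{*,\rss}(-L)$: a naive combination of Lemma \ref{l:proj}, the a priori Silnikov size bound, and the adjoint growth rate $\re^{r_\rsu L}$ falls short of the target exponent in generic spectral regimes. The compensating factor must be harvested by combining the projection convergence of Lemma \ref{l:proj} with the biorthogonality $\la E_{1,\infty}^\rss, e_{j,\infty}^*\ra_\C = 0$, exploiting that the image of a nearly-strong-stable vector under $P_1^\rsu(L)$ is of order $\re^{-\Delta\eta L}$ times the vector itself; this biorthogonality-projection interaction, together with the tuning of $\eta_\rss, \eta_\rsu$ toward $r_\rss, r_\rsu$, is the technical heart of the proof.
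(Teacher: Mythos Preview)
Your decomposition via the projection difference $[P_1^\rsu(L)-P_2^\rsu(-L)]w_2^*(-L)$ is valid but more circuitous than the paper's route, and as written it does not quite close. The paper exploits a structural fact you do not invoke: by construction $e_{j,0}^*\in E_1^\perp\subset (E_1^\rss(0))^\perp$, and the pairing $\la w(\xi),\psi(\xi)\ra$ is conserved when $w$ solves \eqref{e:expd} and $\psi$ solves \eqref{e:adjvar}. Hence $e_j^*(\xi)\perp E_1^\rss(\xi)$ \emph{exactly} for all $\xi\geq 0$. This single observation trivializes your projection-difference manipulations: $\la P_1^\rss(L)w_2^\rsu(-L),e_j^*(L)\ra=0$ identically (your ``second piece'' vanishes), and $\la P_1^\rsu(L)w_2^{*,\rss}(-L),e_j^*(L)\ra=\la w_2^{*,\rss}(-L),e_j^*(L)\ra$ (your ``first piece'' reduces to pairing $\mf{s}_2$ directly with $e_j^*(L)$). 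The paper arrives at the same reduced quantity by writing $\la\mf{u}_1,e_j^*(L)\ra=\la w_1(L),e_j^*(L)\ra=\la w_2(-L)-\Delta q(L),e_j^*(L)\ra$ and splitting $w_2(-L)$.

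The second ingredient you are missing is a refined expansion of $\mf{s}_2=w_2^\rss(-L)$. The paper does not rely on the a~priori Silnikov bound $|\mf{s}_2|\leq C\re^{-\gst L}$; instead it feeds the asymptotics of $q_1(L)$ from Hypothesis~\ref{h:fa} back through the gluing equation \eqref{e:gl} to obtain $\mf{s}_2=a\re^{\nu_\rss L}e_{1,\infty}^{\rss,\rl}+\mathrm{c.c.}+\mc{O}(\re^{-(r_\rss+\Delta\eta)L})$. Only with this sharper description does the asymptotic biorthogonality $\la e_{1,\infty}^{\rss,\rl},e_{j,\infty}^*\ra=0$ yield the target rate. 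Your proposed route, combining Lemma~\ref{l:proj} with the a~priori weight $\re^{-\gst L}$ and the adjoint growth $\re^{r_\rsu L}$, produces an exponent $-(2\Delta\eta+|m|-r_\rsu)$, which is \emph{insufficient} in the small-gap regime $\eta_\rss<2\eta_\rsu$ (there $|m|-r_\rsu\approx r_\rsu-r_\rss<0$). The ``compensating factor'' you allude to cannot be harvested twice from the same biorthogonality; what is actually needed is the sharper size and direction of $\mf{s}_2$ coming from the gluing relation.
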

\begin{proof}
Applying $P_2^\rss(-L)$ to \eqref{e:glue}, we use the asymptotic expansion of Hypothesis \ref{h:fa}, and the projection estimates in Lemma \ref{l:proj} to obtain
\begin{align}
\mf{s}_2 &= -P_2^\rss(-L)\Delta q(L) + P_2^\rss(-L) w_1(L) \notag\\
&= a \re^{\nu_\rss L} e_{1,\infty}^{\rss,\rl} + c.c. + \mc{O}(\re^{-(r_\rss + \Delta \eta)L}).  \label{e:s2}
\end{align}
This, combined with the result of Lemma \ref{l:exadj}, and the fact that $\la e_{1,\infty}^{\rss,\rl}, e_{j,\infty}^*\ra = 0$ ,  allows us to obtain the estimate
$$
|\la w_2^\rss(-L), e_j^*(L) \ra | \leq K\re^{- (2\Delta \eta + \delta_\rss)L},
$$
for some constant $K>0$.
Also, once again using the projection estimates in Lemma \ref{l:proj}, we obtain
\begin{align}
\la \Delta q(L), e_j^*(L) \ra &= \la q_2(L), e_j^*(L) \ra (1 + \mc{O}(\re^{-\Delta \eta L})),\notag\\
&\leq K\re^{(r_\rsu - r_\ru)L}, \notag\\
&\leq K\re^{-(2\Delta \eta + \rho)L},\notag
\end{align}
where this last inequality comes from the eigenvalue requirements in Hypothesis \ref{h:eig1}(i).

Finally, we use the gluing equation \eqref{e:glue}, and the fact that $e_j^*(\xi)\perp E_1^\rss(\xi)$, to find 
\begin{align}
\la \mf{u}_1, e_j^*(L)\ra &= \la w_1^\rsu(L), e_j^*(L)\ra \notag\\
&= \la w_2^\rsu(-L) + w_2^\rss(-L) - w_1^\rss(L) - \Delta q(L), e_j^*(L)\ra\notag\\
&= \la w_2^\rsu(-L) + w_2^\rss(-L) - \Delta q(L), e_j^*(L)\ra,\notag
\end{align}
which, combined with the above estimates yields the result.
\end{proof}

We may now complete the proof of the main theorem with the following proposition which gives the leading order expansion of the right-hand side of \eqref{e:mel4}.
\begin{Proposition}\label{p:bif}
The bifurcation equation \eqref{e:mel5} has the following leading order expansion in $\mu$,
\begin{align}
\mc{M} \mu =
 -\sum_{j=1,2} \lp(\re^{2\Delta \nu L} d_j + \mathrm{c.c.}\rp) e_{j,0}^*  + \mc{O}(\re^{-(2\Delta r+\rho) L})
+\mc{O}\lp(|\mu|\lp( |\mf{v}_1| + |\mu| \rp) \rp),
\end{align}
where 
$$
d_j = a c_1 \overline{\tl c_j}  \la  e_{1,\infty}^{\rsu,\rl},  e_{j,\infty}^*\ra_\C,
$$
for $a,c_1, \tl c_j\in \C$ as defined in Hypothesis \ref{h:fa}, Lemma \ref{l:ex}, and Lemma \ref{l:exadj} respectively, and where $\la\cdot,\cdot\ra_\C$ is the complexified inner product induced by the real inner product on $X$.
\end{Proposition}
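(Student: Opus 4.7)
The strategy is to chase the leading-order dependence on $L$ through equation \eqref{e:mel4}, following the chain $\mf{u}_1^\rtr \to w_2^\rsu(-L) \to \mf{u}_2 \to \mf{s}_2 \to$ eigenspace at infinity, using the lemmas just established. Concretely, I will start from \eqref{e:mel4} and use that $\Phi_1^\rsu(0,L)^*e_{j,0}^* = \Phi_1^{*,\mathrm{us}}(L,0)e_{j,0}^* = e_j^*(L)$, so that the right-hand side becomes $\sum_j \la \mf{u}_1^\rtr, e_j^*(L)\ra e_{j,0}^*$ up to the Melnikov remainder $\mc{O}((|\mu|+|\mf{v}_1|)^2)$. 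Lemma \ref{l:scp} then replaces this by $\sum_j \la w_2^\rsu(-L), e_j^*(L)\ra e_{j,0}^* + \mc{O}(\re^{-(2\Delta\eta+\rho)L})$.

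The next step is to expand $w_2^\rsu(-L)$ via the variation-of-constants formula $w_2^\rsu(-L) = \Phi_2^\rsu(-L,0)\mf{u}_2 + \int_0^{-L}\Phi_2^\rsu(-L,\zeta)\tl g_2(\zeta)d\zeta$; the integral is quadratic in $w_2$ and $\mu$, and the estimates already used in Proposition \ref{p:gl} and Proposition \ref{p:tr} push it into the error. From the fixed-point equation \eqref{e:trfp1} one reads off $\mf{u}_2 = -P_{2,+}^\rsu(0)\Phi_2^\rss(0,-L)\mf{s}_2^\mathrm{gl} + \mathrm{h.o.t.}$, where the higher-order terms contain $h_\rs$ and the quadratic integral and are controlled by \eqref{e:est22}--\eqref{e:trfp}. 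Substituting the expansion \eqref{e:s2} for $\mf{s}_2$, namely $\mf{s}_2 = a\re^{\nu_\rss L}e_{1,\infty}^{\rss,\rl} + \mathrm{c.c.} + \mc{O}(\re^{-(r_\rss+\Delta\eta)L})$, and applying Lemma \ref{l:ex} to compute the action of $\Phi_2^\rsu(-L,0)P_{2,+}^\rsu(0)\Phi_2^\rss(0,-L)$ on the leading eigenvectors gives
\begin{equation*}
\Phi_2^\rsu(-L,0)\mf{u}_2 = -ac_1 \re^{(\nu_\rss+\Delta\nu) L}e_{1,\infty}^{\rsu,\rl} - \overline{a}\,c_2 \re^{(\overline{\nu_\rss}+\overline{\Delta\nu}) L}\overline{e_{1,\infty}^{\rsu,\rl}} + \mc{O}(\re^{-(\Delta r+r_\rss+\rho)L}).
\end{equation*}

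Now pair with $e_j^*(L)$ using Lemma \ref{l:exadj}, which writes $e_j^*(L) = \tl c_j\re^{-\overline{\nu_\rsu}L}e_{j,\infty}^* + \overline{\tl c_j}\re^{-\nu_\rsu L}\overline{e_{j,\infty}^*} + \mathrm{l.o.t.}$ The key algebraic fact is biorthogonality in the complexified (Hermitian) pairing $\la\cdot,\cdot\ra_\C$: since $D_UF(U_*)e_{1,\infty}^{\rsu,\rl}=\nu_\rsu e_{1,\infty}^{\rsu,\rl}$ and $D_UF(U_*)^*e_{j,\infty}^*=\overline{\nu_\rsu}e_{j,\infty}^*$, the eigenvalues of $A$ and $A^*$ pair nontrivially only when $\lambda=\overline{\mu}$, so the cross terms $\la e_{1,\infty}^{\rsu,\rl},\overline{e_{j,\infty}^*}\ra_\C$ vanish. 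The surviving term is
\begin{equation*}
\la \Phi_2^\rsu(-L,0)\mf{u}_2, e_j^*(L)\ra_\C = -ac_1\overline{\tl c_j}\,\re^{(2\nu_\rss-2\nu_\rsu)L}\la e_{1,\infty}^{\rsu,\rl},e_{j,\infty}^*\ra_\C - \mathrm{c.c.} + \mathrm{error} = -(\re^{2\Delta\nu L}d_j + \mathrm{c.c.}) + \mathrm{error},
\end{equation*}
with the real part of $2\Delta\nu$ being $-2\Delta r$, which sets the size of the leading term.

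The final task is bookkeeping of errors. Every remainder picked up along the way (the integral in the variation-of-constants, the $h_\rs$ correction, the subleading parts of $\mf{s}_2$, the subleading parts in Lemmas \ref{l:ex}--\ref{l:exadj}, and the non-leading projection parts) has magnitude strictly smaller than $\re^{-2\Delta r L}$ by a factor $\re^{-\rho L}$ for some $\rho>0$; the nonlinear Melnikov remainder $\mc{O}((|\mu|+|\mf{v}_1|)^2)$ becomes $\mc{O}(|\mu|(|\mu|+|\mf{v}_1|))$ once one uses $|\mu|\lesssim \re^{-2\Delta r L}$. Collecting these gives exactly the stated expansion. I expect the main subtlety to be verifying that the complex biorthogonality kills the cross terms cleanly (so that the leading growth is $\re^{2\Delta\nu L}$ and not a mixture involving $\re^{(2\nu_\rss-\nu_\rsu-\overline{\nu_\rsu})L}$), and consistently collecting all the exponentially weighted error estimates so the single remainder $\mc{O}(\re^{-(2\Delta r+\rho)L})$ actually dominates the many residual contributions from Sections \ref{ss:gl}--\ref{ss:ntm}.
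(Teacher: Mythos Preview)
Your proposal is correct and follows essentially the same route as the paper: reduce the right-hand side of \eqref{e:mel4} to $\la w_2^\rsu(-L), e_j^*(L)\ra$ via Lemma~\ref{l:scp}, expand $w_2^\rsu(-L)$ through $\Phi_2^\rsu(-L,0)\mf{u}_2$ and the transverse matching relation $\mf{u}_2 = -P_{2,+}^\rsu(0)w_2^\rss(0)+\mathrm{h.o.t.}$, insert the asymptotics \eqref{e:s2} for $\mf{s}_2$, apply Lemma~\ref{l:ex}, and pair with Lemma~\ref{l:exadj} using the biorthogonality $\la e_{1,\infty}^{\rsu,\rl},\overline{e_{j,\infty}^*}\ra_\C=0$. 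The paper additionally notes that reality of $w_2^\rsu(-L)$ forces $c_2=\overline{c_1}$, which you should invoke to collapse the two terms into a single expression plus its complex conjugate; and your last remark about converting $\mc{O}((|\mu|+|\mf{v}_1|)^2)$ into $\mc{O}(|\mu|(|\mu|+|\mf{v}_1|))$ does not literally dispose of the $|\mf{v}_1|^2$ contribution---the paper is equally terse here, so this is a presentational rather than a mathematical gap.
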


\begin{proof}
To begin, by applying the projection $P_{2,+}^\rsu(0)$, and its complement $I - P_{2,+}^\rsu(0)$ to the transverse matching equation \eqref{e:tr}, we obtain
$$
\mf{u}_2 = w_2^\rsu(0) = -P_{2,+}^\rsu(0)w_2^\rss(0) + \mc{O}(\re^{-(\eta_\rss +\Delta \eta)L}).
$$

Then, using estimates similar to those in the proof of Proposition \ref{p:fp} we find
\begin{align}
w_2^\rsu(-L)& = \Phi_2^\rsu(-L,0) \mf{u}_2 + \mc{O}(\re^{-(2\Delta\eta +\rho)L})\notag\\
w_2^\rss(0)& = \Phi_2^\rss(0,-L) \mf{s}_2 + \mc{O}(\re^{-(r_\rss L + 2\Delta \eta L)}).\notag
\end{align}
We then combine these estimates to obtain,  
\begin{align}
w_2^\rsu(-L) =& \Phi_2^\rsu(-L,0) \mf{u}_2 + \mc{O}(\re^{-(2\Delta\eta +\rho)L})\notag\\
=& -\Phi_2^\rsu(-L,0)P_{2,+}^\rsu(0)w_2^\rss(0)+\mc{O}(\re^{-(2\Delta\eta +\rho)L})\notag\\
=&  -\Phi_2^\rsu(-L,0)P_{2,+}^\rsu(0) \lp[ \Phi_2^\rss(0,-L) \mf{s}_2 + \mc{O}(\re^{-(r_\rss L + 2\Delta \eta L)}) \rp] +\mc{O}(\re^{-(2\Delta\eta +\rho)L})\notag\\
=& -\Phi_2^\rsu(-L,0)P_{2,+}^\rsu(0)\Phi_2^\rss(0,-L) \mf{s}_2+\mc{O}(\re^{-(2\Delta\eta +\rho)L})\notag\\
=& -\Phi_2^\rsu(-L,0)P_{2,+}^\rsu(0)\Phi_2^\rss(0,-L) \lp( a \re^{\nu_\rss L} e_{1,\infty}^{\rss,\rl} + c.c.  \rp) + \mc{O}(\re^{-(2\Delta\eta + \rho)L})\notag\\
=& \,a c_1 \re^{(2\nu_\rss - \nu_\rsu)L}   e_{1,\infty}^{\rsu,\rl} + 
a c_2\re^{(2\overline{\nu_\rss} - \overline{\nu_\rsu})L}\    \overline{e_{1,\infty}^{\rsu,\rl}}  + \mc{O}(\re^{-(2\Delta \eta + \rho)L}),\notag
\end{align}
where estimate \eqref{e:s2} was used in the fifth line, and Lemma \ref{l:ex} used in the sixth.  Since this last expression must be real (being the flow of a real initial condition), it can be found that $c_2 = \overline{c_1}$. 
Hence we obtain 
\beq
w_2^\rsu(-L)  = a c_1 \re^{(2\nu_\rss - \nu_\rsu)L} e_{1,\infty}^{\rsu, \rl}  + c.c + \mc{O}(\re^{-(2\Delta\eta+\rho)L}).
\eeq
Applying Lemma \ref{l:exadj} to \eqref{e:mel4}, we obtain
\beq\label{e:bifpf2}
\la \Phi_1^\rsu(0,L)\mf{u}_1,e^*_{j,0}\ra_\C = \la \mf{u}_1, e^*_{j}(L)\ra_\C =\la \mf{u}_1, \re^{-\nu_\rsu L}\tl c_j\,e^*_{j,\infty} +\mathrm{c.c.} \ra_\C(1+\mc{O}(\re^{-\rho L})).
\eeq
 Finally by substituting the expansion obtained for $\mf{u}_1$ in Lemma  \ref{l:scp},  and taking into account the fact that $$\la e_{1,\infty}^{\rsu,\rl}, \overline{e^*_{1,\infty}}\ra_\C = 0,$$ we obtain the result.

\end{proof}

\section{Discussion}\label{s:dis}


\subsection{Application of results}\label{ss:ar}
We now discuss the applicability of our result in the examples given in Section \ref{s:ex}.  

\begin{paragraph}{Cubic-quintic complex Ginzburg-Landau equation}

In this example, all of the required hypotheses either have been proven in previous works, or can be proven using straightforward techniques.  As we study real equations above, one must first write \eqref{e:cgl3} in terms of the variables $\rre u,\rim u$ or $u,\overline{u}$, obtaining a four-dimensional real system.  We also note that the well-posedness assumption of Hypothesis \ref{h:ex} is trivially satisfied.  

Next it can readily be calculated for parameters as in the following proposition, that the eigenvalues of the asymptotic linearization of \eqref{e:cgl3} with $\xi = +\infty$ about the equilibrium $u_*\equiv0$ is hyperbolic, with a complex conjugate pair of eigenvalues on each side of the imaginary axis.  Furthermore, for $\xi = -\infty$, the spectrum of the linearization consists of complex conjugate pairs $\nu_\rss,\overline{\nu_\rss}$, and $\nu_\rsu,\overline{\nu_\rsu}$ which satisfy the desired hypotheses. The results of \cite{vanSaarloos92} then give the following proposition 
\begin{Proposition}\label{p:vS}
For $\alpha,\beta,\gamma$ sufficiently small, and $\rho>1$, there exists a pushed front solution $u_\rff$ of the form $u_\rff(\xi,t) = \re^{\ri \omega_\mathrm{p} t}u_\mathrm{f}(\xi)$ which invades $u_*$ with speed $c_\mathrm{p}>c_\rlin$ and some angular frequency $\omega_\mathrm{p}$.  Here, $u_\mathrm{f}$ solves \eqref{e:cgl2}, approaches the periodic pattern $u_\mathrm{p} = r_\mathrm{p}\re^{\ri k_\mathrm{p} \xi}$ as $\xi\rightarrow-\infty$, and approaches $u_*$ as $\xi\rightarrow+\infty$, where $c_\mathrm{p},\omega_\mathrm{p}, r_\mathrm{p}$, and $k_\mathrm{p}$ satisfy the nonlinear dispersion relation \eqref{e:disp}.  Furthermore, the periodic orbit, $u_p$, has two-dimensional center-unstable manifold in the flow defined by \eqref{e:cgl2}.
\end{Proposition}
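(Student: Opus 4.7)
The plan is to obtain the existence portion from the explicit nonlinear-front construction of \cite{vanSaarloos92} and then verify the spectral and invariant-manifold claims by direct computation on the four-dimensional real spatial-dynamics system associated to \eqref{e:cgl2}. Specifically, I would first record the van Saarloos ansatz $u(\xi) = \rme^{\rmi k_\rff \xi}\psi(\xi)$ with $\psi>0$ and logarithmic derivative $\psi'/\psi$ affine in $\tanh$: substitution into \eqref{e:cgl2} and matching powers of $\psi,\psi^3,\psi^5$ reduces the complex second-order ODE to a closed algebraic system in $(c,\omega,k_\rff,\kappa,r_\rpp,k_\rpp)$ parameterized by $(\alpha,\beta,\gamma,\rho)$. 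The pushed branch is selected by requiring that the leading spatial decay rate $-\rre(\nu_\rss) = -\kappa$ strictly exceed the marginal linear rate at $u_*=0$; existence of this branch for small $(\alpha,\beta,\gamma)$ and $\rho>1$ is the content of \cite{vanSaarloos92}. By construction $u_\rff(\xi)\to 0$ as $\xi\to+\infty$ and $u_\rff(\xi)\to r_\rpp\rme^{\rmi k_\rpp \xi}$ as $\xi\to-\infty$, and the asymptotic wave train $u_\rpp$ satisfies \eqref{e:disp}, which is simply the algebraic compatibility obtained by substituting $u_\rpp$ into \eqref{e:cgl2} and separating real and imaginary parts.

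Next I would verify the strict inequality $c_\rpp > c_\rlin$ by comparing the explicit expression for $c_\rpp$ extracted from the ansatz with the linear spreading speed derived from the pinched double root of the dispersion relation $(1+\rmi\alpha)\nu^2 + c\nu + (1-\rmi\omega) = 0$ at $u_*=0$. In the real limit $(\alpha,\beta,\gamma) = 0$ the comparison reduces to the classical statement $c_\rpp(\rho)>2$ for $\rho>1$ for the real cubic-quintic Nagumo front; the inequality persists for small $(\alpha,\beta,\gamma)$ by continuity, and the implicit function theorem extends the entire pushed branch smoothly in $(\alpha,\beta,\gamma)$.

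For the two-dimensional center-unstable manifold of $u_\rpp$, I would pass to the gauge-detuned coordinate $v(\xi) := \rme^{-\rmi k_\rpp \xi} u(\xi)$. Under this change of variable the wave train becomes the fixed point $v_\rpp = r_\rpp$ of an autonomous four-dimensional real ODE for $(\rre v,\rim v,\rre v_\xi,\rim v_\xi)$. The $S^1$ gauge symmetry $v\mapsto \rme^{\rmi\theta}v$ is inherited, so the linearization at $v_\rpp$ has a simple zero eigenvalue corresponding to the $T_1$-orbit direction, which in original coordinates is precisely the tangent to $u_\rpp$. The remaining characteristic polynomial is cubic; at $(\alpha,\beta,\gamma)=0$ it factors explicitly because amplitude and phase perturbations decouple at leading order, and one checks directly that exactly one root has positive real part and two have negative real part. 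Persistence of this eigenvalue count for small $(\alpha,\beta,\gamma)$ follows from continuity of simple eigenvalues, giving $\dim W^\rcu(u_\rpp) = 1 + 1 = 2$ as claimed.

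The hard part will be the eigenvalue count at the wave train: while existence of the pushed front is essentially immediate from \cite{vanSaarloos92} and the asymptotics and dispersion relation are algebraic book-keeping, verifying that $v_\rpp$ has exactly one unstable spatial eigenvalue requires either the explicit factorization sketched above together with a perturbative extension, or a more global argument via the nonlinear dispersion relation and the sign of the group velocity of $u_\rpp$. Both routes are standard but somewhat tedious; the explicit-factorization route appears cleanest because all relevant quantities can be read off directly from the algebraic relations produced by the van Saarloos ansatz in the first step.
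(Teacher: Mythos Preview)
Your proposal is correct and, in fact, considerably more detailed than what the paper does. The paper offers no proof of this proposition at all: it simply writes ``The results of \cite{vanSaarloos92} then give the following proposition'' and moves on, with the only supporting content being the sentence immediately preceding the statement, which asserts without computation that the relevant asymptotic linearizations have the desired eigenvalue configuration. Your outline --- the explicit $\tanh$-profile ansatz, algebraic matching to extract $(c_\rpp,\omega_\rpp,k_\rpp,r_\rpp)$, the comparison $c_\rpp>c_\rlin$ via perturbation from the real cubic-quintic Nagumo case, and the eigenvalue count at the gauge-reduced wave train --- is exactly the content one would need to unpack the citation, and it is the standard route. The paper takes all of this as already established in \cite{vanSaarloos92} and does not reproduce any of it, so there is nothing to compare at the level of argument; your sketch is simply a faithful expansion of what the cited reference contains.
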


\begin{Remark}\label{r:coor}
Note that our parameter assumptions differ slightly from those of \cite{vanSaarloos92} where $\rho$ is scaled to be equal to 1, and the coefficient of the linear term $u$ is small.  In order to go from our parameters to theirs, one should make the scalings
$$
 u = \fr{\tl u}{a},\quad x = \fr{\tl x}{a^2},\quad  t = \fr{\tl t}{ a^4},\quad c = a^2 \tl c, \quad \chi_\epsilon =a^4 \tl \chi_\epsilon, \quad\gamma = a^2\tl \gamma,\quad a^2 = \rho.
$$
\end{Remark}

Next, for the trigger $\chi_0$ given in \eqref{e:cgl3} above, or for a step-function trigger satisfying $\chi_0\equiv \pm 1$ for $\xi \lessgtr 0$, we have that the trivial solution $u_*$ is a preparation front, and that, for the variables $U = (\rre\, u,\rre \,u_\xi, \rim \,u, \rim \,u_\xi,\chi_0)$, the spatial dynamics equilibria $U_* =(0,0,0,0,-1),\tl U_* = (0,0,0,0,1)$ satisfy $W^\rsu(U_*) = 3$ and $W^\rs(\tl U_*) = 3$, where one dimension from each count is from the $\chi$ direction. Also, let $U_\mathrm{ff}$ denote the heteroclinic orbit in this formulation which corresponds to $u_\mathrm{ff}$.  For $\chi_0$, the tangent spaces of these invariant manifolds are constant and can be explicitly calculated in terms of the spatial eigenvalues. The desired transversality of the intersection about $u_0$ can then be obtained by calculating that
$$
\det \left(\begin{array}{cc}1 & 1 \\\nu_2^+ & \nu_1^-\end{array}\right)\neq 0,
$$ 
where $\nu_j^\pm$ solve the dispersion relation, $d_\pm(\nu) = (1+\ri \alpha)\nu^2+c \nu + (\pm 1 - \ri \omega)$, and are ordered by increasing real part. A standard singular-perturbation argument then gives the transversality for $\chi_\epsilon$ with $0<\epsilon\ll1$.

All that is left is to verify are the intersection properties along $U_\rff$, and the invertibility of the associated Melnikov matrix.  Note that since there is no non-leading strong-stable eigenspace, $E_{1,\infty}^{\rss,\rs}$, the inclination assumption in Hypothesis \ref{h:inc} is trivially satisfied.
\begin{Proposition}\label{p:cglM}
For $\alpha, \beta, \gamma,\epsilon$ sufficiently small, the tangent spaces satisfy 
$$
\dim T_{U_\rff(\xi)}W^\rss(U_*) \cap T_{U_\rff(\xi)}W^\rcu(U_\mathrm{p}) =  \dim \lp( T_{U_\rff(\xi)}W^\rss(U_*) + T_{U_\rff(\xi)}W^\rcu(U_\mathrm{p}) \rp)^\perp = 2,
$$
and the Melnikov matrix 
$$
M =\left(\begin{array}{cc} \int_{-\infty}^\infty\la \psi_1(\xi),\p_c F(U_\rff(\xi)\ra d\xi & 
\int_{-\infty}^\infty\la \psi_1(\xi),\p_\omega F(U_\rff(\xi)\ra d\xi   \\ 
\int_{-\infty}^\infty\la \psi_2(\xi),\p_c F(U_\rff(\xi)\ra d\xi  & 
\int_{-\infty}^\infty\la \psi_2(\xi),\p_\omega F(U_\rff(\xi)\ra d\xi \end{array}\right), 
$$
 is invertible, where $\psi_j(\xi)$ are solutions of the adjoint variational equation of the linearization of \eqref{e:cgl3} with initial conditions $\psi_j(0)$ satifying 
 $$
 \mathrm{span}_{j=1,2}\{\psi_j(0)\} = \lp( T_{U_\rff(\xi)}W^\rss(U_*) + T_{U_\rff(\xi)}W^\rcu(U_\mathrm{p}) \rp)^\perp.
 $$
\end{Proposition}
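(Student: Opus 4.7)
The plan is to establish both parts of the proposition by perturbation from the reversible limit $\alpha=\beta=\gamma=0$, $\omega_\mathrm{p}=0$, where \eqref{e:cgl2} admits complex conjugation as an additional symmetry and reduces on real slices to the real cubic-quintic Nagumo-type equation with an explicit monotone pushed front $u_\mathrm{f}$ provided by \cite{vanSaarloos92}. Since the ambient space restricted to $\chi\equiv -1$ is four-dimensional real, the pair $\bigl(T_{U_\rff(\xi)}W^\rss(U_*),\, T_{U_\rff(\xi)}W^\rcu(U_\mathrm{p})\bigr)$ is Fredholm of index $2+2-4=0$ automatically, so I only need to produce exactly two independent elements of the intersection and verify that a $2\times 2$ determinant is nonzero.

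For the dimension count, the key inputs are the gauge action $u\mapsto e^{i\theta}u$ and the spatial translation $\xi\mapsto\xi-\xi_0$. Both leave \eqref{e:cgl2} invariant and fix $U_\rff$ setwise, so their infinitesimal generators $iU_\rff$ and $U_\rff'$ are bounded solutions of the linearization about $U_\rff$ on all of $\R$, and they lie in $T_{U_\rff(\xi)}W^\rss(U_*)\cap T_{U_\rff(\xi)}W^\rcu(U_\mathrm{p})$ at every $\xi$, giving the lower bound $\dim\ker\geq 2$. For the matching upper bound I would exploit a decoupling present in the reversible limit: linearizing at the real-valued $u_\mathrm{f}$, the variational equation splits into a real and an imaginary scalar sector (since all nonlinear coefficients are then real), with $U_\rff'$ lying in the first and $iU_\rff$ in the second. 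In the real sector, simplicity of the zero eigenvalue is the classical pushed-front Sturmian statement used in \cite{van1989,vanSaarloos92}; in the imaginary sector, the asymptotic constant-coefficient equation at $\xi\to-\infty$ has characteristic exponents $0$ and $-c_\mathrm{p}$ (because $u_\mathrm{f}(-\infty)$ is a zero of the linearization potential $1+\rho u^2 - u^4$), so only a one-dimensional space is bounded at $-\infty$, leaving at most one bounded solution on all of $\R$. Upper semicontinuity of Fredholm kernel dimension then propagates $\dim\ker=\dim\mathrm{coker}=2$ to small $(\alpha,\beta,\gamma)$.

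The Melnikov invertibility follows the same decoupling logic. I would choose $\psi_1,\psi_2$ as bounded solutions of the adjoint variational equation in the weighted setting of Section \ref{ss:sil}, dual to the two symmetry generators, so that at the limit $\psi_1$ lies in the real sector and $\psi_2$ in the imaginary sector. Reading off \eqref{e:cgl3}, one has $\partial_c F|_{U_\rff}\propto U_\rff'$ (from the $c u_\xi$ term, real sector) and $\partial_\omega F|_{U_\rff}\propto -iU_\rff$ (from the $-i\omega u$ term, imaginary sector). Hence the Melnikov matrix is diagonal at the limit with strictly positive weighted entries, and continuity in $(\alpha,\beta,\gamma)$ preserves invertibility in a full neighborhood of the base point. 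The most delicate point I anticipate is that the base point itself does not satisfy Hypothesis \ref{h:eig1}: the asymptotic eigenvalues at $U_*$ for $(\alpha,\beta,\gamma,\omega_\mathrm{p})=0$ are two real doubles rather than two conjugate pairs with distinct real parts, so the perturbation path must be organized so as to split this degeneracy first, and the continuous dependence of the $\psi_j$ on parameters must be set up in exponentially weighted spaces compatible with the emerging spectral splitting, rather than in plain $L^2$.
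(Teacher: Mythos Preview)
Your proposal is correct and shares the paper's core idea: perturb from the real-coefficient limit $\alpha=\beta=\gamma=0$, where the linearization about the real pushed front has special structure, and carry the conclusions to small parameters by openness. The organizational difference is that you decouple the linearized problem into real and imaginary scalar sectors and treat each by hand---Sturmian simplicity in the real sector, an asymptotic-exponent dimension count in the imaginary sector, and an explicitly diagonal Melnikov matrix---whereas the paper argues in one stroke: the full operator $\mathcal{T}$ is self-adjoint on the exponentially weighted space $L^2_{c/2}(\R\times\mathbb{T})$, so algebraic and geometric multiplicities of the zero eigenvalue coincide and equal two, and then invertibility of $M$ is read off from the general equivalence between algebraic simplicity and Melnikov nondegeneracy proved in \cite[Thm.~8.4, Lem.~8.7]{Sandstede01}. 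Your sector decomposition is really just an explicit unpacking of that self-adjointness (both scalar sectors are Sturm--Liouville in the same weight), so the two routes are equivalent in substance; yours is more self-contained but longer, while the paper's is terse and leans on the cited reference. Your closing caveat that the base point itself violates Hypothesis~\ref{h:eig1} (real double eigenvalues rather than two complex pairs) is well taken and not addressed in the paper's proof, but as you note it does not obstruct the present proposition, which concerns only the kernel dimension and the Melnikov determinant.
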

\begin{proof}
First, after scaling $t = \omega \tau$, note that $\p_\tau u_\rff$ and $\p_\xi u_\rff$ are linearly independent and lie in the kernel of the linear operator
\begin{align}
\mc{T}:V\mapsto& - \omega V_\tau + (1+\ri \alpha) V_{\xi\xi} + c V_{\xi} +V + g_{A,\overline{A}}(A_\rff,\overline{A_\rff}) V.
\end{align}

Since $\epsilon$ is small, it can be readily obtained that $\chi_\epsilon$ does not induce any eigenfunctions corresponding to resonance poles of the Evans function about $u_*$ and hence that $\p_\tau u_\rff,\,$ and $\p_\xi u_\rff$ span the kernel, so that $0$ is an eigenvalue of $\mc{T}$ with geometric multiplicity $2$. We claim its algebraic multiplicity is also equal to 2.  Momentarily assuming this claim, the first statement of the proposition follows immediately, as the adjoint variational equation is found to have two linearly independent bounded solutions.  The proofs of Theorem 8.4 and Lemma 8.7 in \cite{Sandstede01} can then be used to obtain that $M$ is invertible.

To prove the claim, we first note that in the real-coefficient case $\alpha =\gamma  = \beta = 0$ the linearized operator $\mc{T}$ is self-adjoint when defined on an exponentially weighted space $L^2_{c/2}(\R\times \mb{T})$ with weight $\re^{\fr{c}{2}\xi}$.  This implies that $\mc{T}$ does not have a generalized kernel for $\alpha = \gamma = \beta = 0$.  Since algebraic simplicity is an open property, we then have that for complex parameters $\alpha, \gamma, \beta$ sufficiently small, the eigenfunctions $\p_t u_\rff, \p_\xi u_\rff$ remain algebraically simple.
\end{proof}

With these propositions in hand we can then apply our results to obtain the existence of a family of pushed trigger fronts in the modified qcGL equation which bifurcate from the pushed free front solution obtained in Proposition \ref{p:vS}.

\begin{Remark}
We also note that our results could be obtained for qcGL with the step function trigger $\chi_0$ by first simplifying the phase-space with the blow-up coordinates used in \cite{GohScheel14}.  One then obtains a phase space $\R_+\times \mc{S}^2$, where $\mc{S}^2$ denotes the Riemann sphere.  In these coordinates, the dynamics are smoothly foliated over $\mc{S}^2$ which is a normally hyperbolic invariant manifold.  Furthermore, the dynamics on $\mc{S}^2$ are described by a Ricatti equation which can be explicitly integrated.  Here, the periodic orbit $u_p$ reduces to a point, with one-dimensional unstable manifold, while the target manifold $W_+^\rs(0)$ is two dimensional. Lin's method could then be readily applied to obtain the  desired result.\end{Remark}

\end{paragraph}

\begin{paragraph}{Cahn-Hilliard}

In the case of Cahn-Hilliard, more work needs to be done to apply our bifurcation result.  While the existence of a preparation front $u_\rpr$ can be obtained using a Conley index argument (see \cite[App. A]{goh15}), the existence of an oscillatory pushed front is, to the authors' knowledge, still an open problem.  Furthermore, the spectrum of the linearized modulated traveling wave problem would have to be obtained by first using a Fourier decomposition in time,
$$
\ri \omega \ell \hat{u} = -\p_{\xi\xi}\lp( \p_{\xi\xi} \hat{u} + f'(u_*)\hat{u}  \rp) = c\p_\xi \hat{u} ,\quad \ell\in \Z,
$$
writing each equation as a first order equation in $\xi$, and finding the four spatial eigenvalues $\nu_\ell$ which satisfy
$$
0=\nu^4 + f'(u_*)\nu^2 - c_\mathrm{p} \nu + \ri\omega_\mathrm{p}\ell, \quad \ell\in \Z.
$$  
  Using a scaling argument, it is possible to show that such spatial eigenvalues are bounded far away from the imaginary axis for large $\ell$ and thus only a few values of $\ell$ need be studied.   From this one should hopefully be able to establish (or assume) the intersection properties of Section \ref{ss:int}, possibly after factoring out the $S^1$-equivariance, and obtain a leading order expansion for the bifurcation equation. 

In practice, one may also proceed by verifying the hypotheses with numerical computations.  As shown above, the existence of a pushed front can be evidenced by numerical continuation.  Then, the spaitial eigenvalues $\nu_\ell$ could be found for each $\ell$ using the values for $c_\mathrm{p}$ and $\omega_\mathrm{p} $ obtained from the AUTO calculation in Figure \ref{f:chcont}.   Regarding the discussion in Remark \ref{r:fsym}, we note that for the numerically determined $c_\mathrm{p}$ and $\omega_\mathrm{p}$, the leading eigenspaces corresponding to $\nu_{\rss/\rsu}$ lie in the $\ell=1$ Fourier subspace.

One could then use a numerical eigenvalue solver to test the transversality hypotheses on $u_\rpr$ and $u_\mathrm{ff}$.  For the former, one need only verify that the kernel of the discretized linearization about $u_\rpr$ is empty.  For the latter, since the free pushed front is time-periodic in a co-moving frame, one must look at the spectrum of the discretized linear period map and determine that the algebraic multiplicity of the Floquet exponent at 0 is two.  Finally, since inclination-flip configurations are degenerate, the failure of the inclination hypothesis could be tested by perturbing the preparation or free pushed fronts.  

In many experiments and models using the Cahn-Hilliard equation (see for example \cite{Foard12,goh15,lagzi13}), the preparation front is controlled by a traveling source term, instead of a spatial inhomogeniety as in \eqref{e:mCH} above.  Such systems usually take the form
\beq\label{e:chsrc}
u_t = -(u_{xx} + f(u))_{xx} + cu_x + c h(x),
\eeq
where the source term $h:\R\rightarrow \R$ is positive, spatially localized, and deposits mass into the system to transform a stable homogeneous equilibrium into to an unstable state in its wake. For simplicity, let us also assume that $h'$ is compactly supported.  To apply our results in this case we must take into account that this equation preserves mass and hence has a linearization with additional neutral modes. This is manifested in the corresponding spatial dynamics formulation,
\begin{align}
u_x &=v \notag\\
v_x &= \theta - f(u) \notag\\
\theta_x &= w\notag\\
w_x& = -\omega \p_\tau +c u_x + h(\xi),
\end{align}
as the existence of a conserved quantity
$$
I(u_1;c) =\fr{1}{2\pi}\int_0^{2\pi} w - cu \,\,d\tau, \qquad u_1 = (u,v,\theta,w),
$$
which is constant under the flow for all $|\xi|$ sufficiently large (i.e. outside of the support of $h'$).  The existence of such a quantity implies the existence of a family of periodic orbits and hence pushed free invasion fronts which are parameterized by fixed values $I(u_1;s)\equiv m$.  Thus, pushed free invasion fronts, if shown to exist, will come in a 1-parameter family as well. One can obtain existence of pushed trigger fronts, by restricting the phase-space to the affine, co-dimension one subspaces $\{I \equiv m\}$ and then applying our results.

More generally, if the spatial dynamics formulation of a pattern-forming system possesses conserved quantities one must perform a dimension counting to check that our genericity and intersection hypotheses still hold.  Namely, one must verify that the introduction of neutral modes about the equilibria and periodic orbit preserves the transversality and Fredholm properties we require. For more information and an example of such calculations see \cite[Sec. 4]{Goh11}.

\end{paragraph}

\subsection{Other spectral splittings}
As mentioned in the introduction, the spectral splitting associated with the pushed front's strong-stable decay and the next weakly-stable eigenvalue comes in other varieties in addition to the case we studied.  First we remark that a system where $\nu_\rss$ has a complex conjugate while $\nu_\rsu$ is real and simple should behave in the same way as discussed above, as the quantity $\Delta\nu$ would still be complex.  One such example arrises in the Extended Fisher-Kolmogorov equation,
\beq\label{e:efk}
u_t = -\gamma\p^4_x u + \p_x^2 u + \fr{u}{b}(b+u)(1-u),\quad x,t\in \R,\,\, u\in \R.
\eeq 
It has been observed in \cite{van1989} that pushed fronts exist for $b$ sufficiently small.  For $\gamma<1/12$, this front is asymptotically constant in the wake, while for $\gamma>1/12$ it forms a spatially periodic pattern of ``kinks" and ``anti-kinks".  In the former case, with $\gamma = 0.08$ for example, the pushed speed is found to be $c_\mathrm{p} \approx 2.175$ while  spatial eigenvalues $\nu_\rss,\overline{\nu_\rss} \approx -0.575971 \pm 1.21251\,\ri$ and $\nu_\rsu \approx -0.365678$.  Hence, the pushed front in this case has an oscillatory tail and we thus predict that a triggered version of this equation, with say $\chi(x - ct)$ multiplied by the linear term $u$, would exhibit non-monotone front locking with respect to the trigger interface. 

Many examples arise where both $\nu_\rss$ and $\nu_\rsu$ are real, generically leading to monotone front selection and no front locking phenomena unless a more complicated spatial trigger is introduced.   One such example is the cubic-quintic Nagumo equation,
\beq\label{e:nag}
u_t = \p_x^2 u + u + d u^3 - u^5, \quad x,t\in \R, \,\, u\in \R.
\eeq 
Here, using a reduction of order method (see \cite{van1989}), one finds that free pushed fronts exist for all $d>\fr{2\sqrt{3}}{3}$ and travel with speed $c_\mathrm{p}=\fr{-d + 2\sqrt{d^2+4}}{\sqrt{3}}$.  

While such fronts will always be asymptotically constant (i.e. no periodic pattern in the wake), we hypothesize that certain spatial triggers could induce the front locking phenomena discussed above. To this end, one could explore triggering phenomena by moving into a co-moving frame of speed $c$ and studying the equation on a semi-infinite domain $x\in (-\infty,0]$ with various boundary conditions $B_1(u_x) + B_2(u) = 0$ at $x = 0$.  In order for the problem to be well-posed, one would look at conditions of the form $u_x(0,t) = \phi(u(0,t))$, for some smooth function $\phi:\R\rightarrow\R$.  Triggered fronts would then be obtained by finding connections between the strong-stable manifold, $W^\rss(0)$, and the boundary manifold $\mc{B}^\phi$ defined by the graph of $\phi$.   By selecting specific boundary conditions, one could then obtain multi-stability of fronts which lock to the boundary condition at different distances. See \cite{morrissey15} for a general study of this subject in the case where the co-moving frame speed is zero.  We also remark that it may be possible to observe interesting dynamics if a triggering mechanism could be used to perturb pushed fronts in the phase-field system studied in \cite{Goh11}.

\subsection{Stability of pushed trigger fronts}\label{ss:inst}
Though we did not study the stability of pushed trigger fronts, we expect such solutions to be stable for parameters lying on branches of the bifurcation curve $\mu_*(L)$.  
For more general types of triggers, resonance poles or branch poles ahead of the preparation front could induce faster speeds and different wavenumbers in the wake. The solutions we construct here would in such a case be unstable. These effects have been documented in a particular, prototypical case of coupled KPP equations in \cite{holzer14b}.  In the context of the examples given in Section \ref{s:ex},  if $\epsilon$ were not small, the interface of $\chi_\epsilon$ would be shallow, taking a long time to ramp up from -1 to 1.  This would cause resonance poles to arise in the linearization about $u_*$ leading to instabilities in the interfacial region where $\chi_\epsilon$ is not close to $\pm1$.  Such an example could also be realized in \eqref{e:chsrc} by making the source term $h$ only weakly localized. Here, the resulting preparation front would possess localized instabilities as the interface slowly passes through the spinodal region.

Additionally, different patterns would be selected if the triggering function $\chi$ were not monotone. For example, if there were a bounded region where $\chi>1$ then the linearization about the unstable equilibrium could possess unstable extended point spectrum which would effect the pattern-selection mechanism.  Also, see \cite{wilzek14} for interesting numerical results where spatially periodic forcing induces the selection of different patterns and locking behavior.

%


\bibliography{PushedLocked}

\begin{thebibliography}{10}

\bibitem{mimura}
{\sc A.~Aotani, M.~Mimura, and M.~Mollee}, {\em A model aided understanding of
  spot pattern formation in chemotactic e. coli colonies.}, Japan J. Ind. Appl.
  Math., 27 (2010), pp.~5--22.

\bibitem{ben1985pattern}
{\sc E.~Ben-Jacob, H.~Brand, G.~Dee, L.~Kramer, and J.~Langer}, {\em Pattern
  propagation in nonlinear dissipative systems}, Physica D:, 14 (1985),
  pp.~348--364.

\bibitem{chen1998strong}
{\sc X.-Y. Chen}, {\em A strong unique continuation theorem for parabolic
  equations}, Mathematische Annalen, 311 (1998), pp.~603--630.

\bibitem{chomaz2005global}
{\sc J.-M. Chomaz}, {\em Global instabilities in spatially developing flows:
  non-normality and nonlinearity}, Annu. Rev. Fluid Mech., 37 (2005),
  pp.~357--392.

\bibitem{chomaz1999against}
{\sc J.-M. Chomaz and A.~Couairon}, {\em Against the wind}, Physics of Fluids
  (1994-present), 11 (1999), pp.~2977--2983.

\bibitem{couairon1997}
{\sc A.~Couairon and J.-M. Chomaz}, {\em Absolute and convective instabilities,
  front velocities and global modes in nonlinear systems}, Physica D, 108
  (1997), pp.~236--276.

\bibitem{couairon1997cf}
{\sc A.~Couairon and J.-M. Chomaz}, {\em Pattern selection in the presence of a
  cross flow}, Phys. Rev. Lett., 79 (1997), p.~2666.

\bibitem{couairon2001pushed}
{\sc A.~Couairon and J.-M. Chomaz}, {\em Pushed global modes in weakly
  inhomogeneous subcritical flows}, Physica D, 158 (2001), pp.~129--150.

\bibitem{Cox02}
{\sc S.~M. Cox and P.~C. Matthews}, {\em Exponential time differencing for
  stiff systems}, J. Comput. Phys., 176 (2002), pp.~430--455.

\bibitem{cross1993pattern}
{\sc M.~C. Cross and P.~C. Hohenberg}, {\em Pattern formation outside of
  equilibrium}, Rev. Mod. Phys., 65 (1993), p.~851.

\bibitem{Dee83}
{\sc G.~Dee and J.~S. Langer}, {\em Propagating pattern selection}, Phys. Rev.
  Lett., 50 (1983), pp.~383--386.

\bibitem{doelmangeometric15}
{\sc A.~Doelman, P.~van Heijster, and F.~Xie}, {\em A geometric approach to
  stationary defect solutions in one space dimension}, preprint.

\bibitem{fiedler2003spatio}
{\sc B.~Fiedler and A.~Scheel}, {\em Spatio-temporal dynamics of
  reaction-diffusion patterns}, in Trends in nonlinear analysis, Springer,
  2003, pp.~23--152.

\bibitem{fife2002pattern}
{\sc P.~C. Fife}, {\em Pattern formation in gradient systems}, Handbook of
  Dynamical Systems, 2 (2002), pp.~677--722.

\bibitem{Foard12}
{\sc E.~M. Foard and A.~J. Wagner}, {\em Survey of morphologies formed in the
  wake of an enslaved phase-separation front in two dimensions}, Phys. Rev. E,
  85 (2012), p.~011501.

\bibitem{gallay94}
{\sc T.~Gallay}, {\em Local stability of critical fronts in nonlinear parabolic
  partial differential equations}, Nonlinearity, 7 (1994), p.~741.

\bibitem{bradleygelfand}
{\sc M.~P. Gelfand and R.~M. Bradley}, {\em Highly ordered nanoscale patterns
  produced by masked ion bombardment of a moving solid surface}, Phys. Rev. B,
  86 (2012), p.~121406.

\bibitem{glasner2000traveling}
{\sc K.~Glasner}, {\em Traveling waves in rapid solidification}, Electron. J.
  Differential Equations, 2000 (2000), pp.~1--28.

\bibitem{Goh11}
{\sc R.~Goh, S.~Mesuro, and A.~Scheel}, {\em Spatial wavenumber selection in
  recurrent precipitation}, SIAM J. Appl. Dyn. Syst., 10 (2011), pp.~360--402.

\bibitem{GohScheel14}
{\sc R.~Goh and A.~Scheel}, {\em Triggered fronts in the complex {G}inzburg
  {L}andau equation}, J. Nonlinear Sci., 24 (2014), pp.~117--144.

\bibitem{goh15}
\leavevmode\vrule height 2pt depth -1.6pt width 23pt, {\em Hopf bifurcation
  from fronts in the {C}ahn-{H}illiard equation}, Archive for Rational
  Mechanics and Analysis, 217 (2015), pp.~1219--1263.

\bibitem{henry81}
{\sc D.~Henry}, {\em Geometric theory of semilinear parabolic equations},
  vol.~840, Berlin: Springer-Verlag, 1981.

\bibitem{holzer2012slow}
{\sc M.~Holzer and A.~Scheel}, {\em A slow pushed front in a
  {L}otka--{V}olterra competition model}, Nonlinearity, 25 (2012), p.~2151.

\bibitem{holzer14b}
{\sc M.~Holzer and A.~Scheel}, {\em Accelerated fronts in a two-stage invasion
  process}, SIAM J. Math. Anal., 46 (2014), pp.~397--427.

\bibitem{holzer14}
{\sc M.~Holzer and A.~Scheel}, {\em Criteria for pointwise growth and their
  role in invasion processes}, J. Nonlinear Sci., 24 (2014), pp.~661--709.

\bibitem{homburg2010homoclinic}
{\sc A.~J. Homburg and B.~Sandstede}, {\em Homoclinic and heteroclinic
  bifurcations in vector fields}, Handbook of Dynamical Systems, 3 (2010),
  pp.~379--524.

\bibitem{iooss1991bifurcating}
{\sc G.~Iooss and A.~Mielke}, {\em Bifurcating time-periodic solutions of
  {N}avier-{S}tokes equations in infinite cylinders}, J. Nonlinear Sci., 1
  (1991), pp.~107--146.

\bibitem{kirchgassner1982wave}
{\sc K.~Kirchg{\"a}ssner}, {\em Wave-solutions of reversible systems and
  applications}, J. Differential Equations, 45 (1982), pp.~113--127.

\bibitem{Kopf14}
{\sc M.~H. K\"{o}pf and U.~Thiele}, {\em Emergence of the bifurcation structure
  of a {L}angmuir-{B}lodgett transfer model}, Nonlinearity, 27 (2014), p.~2711.

\bibitem{krekhov}
{\sc A.~Krekhov}, {\em Formation of regular structures in the process of phase
  separation}, Phys. Rev. E, 79 (2009), p.~035302.

\bibitem{lin90}
{\sc X.-B. Lin}, {\em Using {M}elnikov's method to solve {S}ilnikov's
  problems}, Proc. Roy. Soc. Edinburgh Sec. A, 116 (1990), pp.~295--325.

\bibitem{manz2002excitation}
{\sc N.~Manz, V.~Davydov, V.~Zykov, and S.~M{\"u}ller}, {\em Excitation fronts
  in a spatially modulated light-sensitive {B}elousov-{Z}habotinsky system},
  Phys. Rev. E, 66 (2002), p.~036207.

\bibitem{matsuyama}
{\sc M.~Matsushita, F.~Hiramatsu, N.~Kobayashi, T.~Ozawa, and T.~Yamazaki,
  Y.and~Matsuyama}, {\em Colony formation in bacteria: experiments and
  modeling}, Biofilms, 1 (2004), pp.~305--317.

\bibitem{morrissey15}
{\sc D.~Morrissey and A.~Scheel}, {\em Characterizing the effect of boundary
  conditions on striped phases}, SIAM J. on Appl. Dyn. Syst., 14 (2015),
  pp.~1387--1417.

\bibitem{Peterhof97}
{\sc D.~Peterhof, B.~Sandstede, and A.~Scheel}, {\em Exponential dichotomies
  for solitary-wave solutions of semilinear elliptic equations on infinite
  cylinders}, J. Differential Equations, 140 (1997), pp.~266 -- 308.

\bibitem{pier1998steep}
{\sc B.~Pier, P.~Huerre, J.-M. Chomaz, and A.~Couairon}, {\em Steep nonlinear
  global modes in spatially developing media}, Physics of Fluids
  (1994-present), 10 (1998), pp.~2433--2435.

\bibitem{Rademacher05}
{\sc J.~D. Rademacher}, {\em Homoclinic orbits near heteroclinic cycles with
  one equilibrium and one periodic orbit}, J. Differential Equations, 218
  (2005), pp.~390 -- 443.

\bibitem{Sandstede99}
{\sc B.~Sandstede and A.~Scheel}, {\em Essential instability of pulses and
  bifurcations to modulated travelling waves}, Proc. Roy. Soc. Edinburgh Sec.
  A, 129 (1999), pp.~1263--1290.

\bibitem{Sandstede01}
{\sc B.~Sandstede and A.~Scheel}, {\em On the structure of spectra of modulated
  travelling waves}, Mathematische Nachrichten, 232 (2001), pp.~39--93.

\bibitem{sandstede2004defects}
{\sc B.~Sandstede and A.~Scheel}, {\em Defects in oscillatory media: toward a
  classification}, SIAM J. Appl. Dyn. Sys., 3 (2004), pp.~1--68.

\bibitem{lagzi13}
{\sc S.~Thomas, I.~Lagzi, F.~Moln\'ar, and Z.~R\'acz}, {\em Probability of the
  emergence of helical precipitation patterns in the wake of reaction-diffusion
  fronts}, Phys. Rev. Lett., 110 (2013), p.~078303.

\bibitem{van1989}
{\sc W.~Van~Saarloos}, {\em Front propagation into unstable states. ii. linear
  versus nonlinear marginal stability and rate of convergence}, Phys. Rev. A,
  39 (1989), p.~6367.

\bibitem{vanSaarloos03}
{\sc W.~van Saarloos}, {\em Front propagation into unstable states}, Phys.
  Rep., 386 (2003), pp.~29 -- 222.

\bibitem{vanSaarloos92}
{\sc W.~van Saarloos and P.~Hohenberg}, {\em Fronts, pulses, sources and sinks
  in generalized complex {G}inzburg-{L}andau equations}, Physica D, 56 (1992),
  pp.~303 -- 367.

\bibitem{Iooss92}
{\sc A.~Vanderbauwhede and G.~Iooss}, {\em Center manifold theory in infinite
  dimensions}, in Dynamics Reported, C.~Jones, U.~Kirchgraber, and H.~Walther,
  eds., vol.~1 of Dynamics Reported, Springer Berlin Heidelberg, 1992,
  pp.~125--163.

\bibitem{wilzek14}
{\sc M.~Wilczek and S.~V. Gurevich}, {\em Locking of periodic patterns in
  {C}ahn-{H}illiard models for {L}angmuir-{B}lodgett transfer}, Phys. Rev. E,
  90 (2014), p.~042926.

\end{thebibliography}
\bibliographystyle{siam}

%
%
%
%
%
%
%
%
%
%
%
%
%
%
%
%
%
%
%

%
%
%
%
%
%
%
%
%
%
%
%
%

\end{document}